\numberwithin{equation}{section}
\newtheorem{theorem}{Theorem}[section]
\newtheorem{lemma}[theorem]{Lemma}
\newtheorem{remark}[theorem]{Remark}
\newtheorem{proposition}[theorem]{Proposition}
\newtheorem{corollary}[theorem]{Corollary}
\newtheorem*{claim*}{Claim}
\begin{document}

\title[the uniqueness and the non-degeneracy]
{A note on the uniqueness and the non-degeneracy of positive radial solutions for semilinear elliptic problems and its application}

\author[S.~Adachi, M.~Shibata, T.~Watanabe]
{Shinji Adachi, Masataka Shibata, Tatsuya Watanabe}

\address{Shinji Adachi \newline
Department of Mathematical and Systems Engineering, Faculty of Engineering, Shizuoka University,
3-5-1 Johoku, Naka-ku, Hamamatsu, 432-8561, Japan}
\email{adachi@shizuoka.ac.jp}

\address{Masataka Shibata \newline
Department of Mathematics, Tokyo Institute of Technology,
2-12-1 Oh-okayama, Meguro-ku, Tokyo 152-8551, Japan}
\email{shibata@math.titech.ac.jp}

\address{Tatsuya Watanabe \newline
Department of Mathematics, Faculty of Science, Kyoto Sangyo University,
Motoyama, Kamigamo, Kita-ku, Kyoto-City, 603-8555, Japan}
\email{tatsuw@cc.kyoto-su.ac.jp}

\begin{abstract}
In this paper, we are concerned with the uniqueness 
and the non-degeneracy of positive radial solutions 
for a class of semilinear elliptic equations.
Using detailed ODE analysis, we extend previous results to
cases where nonlinear terms may have sublinear growth.
As an application, we obtain the uniqueness and the non-degeneracy of
ground states for modified Schr\"odinger equations.
\end{abstract}

\subjclass[2010]{35J61, 35A02, 35B05}
\keywords{Positive radial solution, uniqueness, non-degeneracy, shooting method}

\maketitle

\section{Introduction and Main results}

In this paper, we consider the uniqueness and the non-degeneracy
of positive radial solutions for the following semilinear elliptic problem:
\begin{equation}\label{eq:1.1}
-\Delta u= g(u) \ \hbox{in} \ \mathbb{R}^N, \ 
u(x) \to 0 \ \hbox{as} \ |x| \to \infty,
\end{equation}
where $N \ge 2$ and $g:[0,\infty) \to \mathbb{R}$.
Especially we are interested in the case where the nonlinear term $g$ may have
a sublinear growth at infinity.

Equation \eqref{eq:1.1} appears in various fields of mathematical physics and biology, 
and has been studied widely.
Particularly, almost optimal conditions for the existence 
of nontrivial solutions have been obtained in 
\cite{BGK, BL, HIT}.
Equation \eqref{eq:1.1} has the variational structure
and solutions of \eqref{eq:1.1} can be characterized as critical points
of the functional $E:H^1(\mathbb{R}^N) \to \mathbb{R}$ defined by
$$
E(u):= \frac{1}{2} \int_{\mathbb{R}^N} | \nabla u|^2 \,dx 
-\int_{\mathbb{R}^N} G(u) \,dx,$$
where $G(s)=\int_0^s g(\tau) \,d\tau$.
Then nontrivial solutions obtained in \cite{BGK, BL, HIT}
were indeed {\it ground states}
of \eqref{eq:1.1}, namely solutions of \eqref{eq:1.1} which have least energy
among all nontrivial critical points of $E$.
It should be also noted that ground states obtained 
in \cite{BGK, BL, HIT} were positive and radially symmetric.
But under generic assumptions for 
which the existence of ground states is guaranteed, 
one can show that {\it any} ground state is positive.
Moreover we can also prove that any ground state is radially symmetric.
This can be shown in two ways: One is based on the Moving Plane method
for positive solutions (\cite{GNN}), and 
the other is based on the rearrangement technique for minimizers
(\cite{BJM} and references therein).
Thus if we could show the uniqueness of positive radial solutions,
then we can also obtain the uniqueness of ground states.

There are lots of works concerning with the uniqueness of positive radial solutions for \eqref{eq:1.1}.
See \cite{C, Kw} for the model case $g(u)=-u+u^3$ or $g(u)=-u+u^p$ 
and \cite{BS, Ko, Mc, MS, OS, PS1, PS2, ST} and references therein for more general nonlinearities. 
But there still remains a gap between sufficient conditions 
for the existence and those for the uniqueness.
Especially when the nonlinear term may have sublinear growth at infinity,
the uniqueness is not known in general.

Once we could obtain the uniqueness of positive solutions of \eqref{eq:1.1} and
go back to its proof, we can also show that the unique positive solution of \eqref{eq:1.1} is {\it non-degenerate}, 
that is, the linearized operator $\mathcal{L}
=\Delta +g'(u)$ has exactly $N$-dimensional kernel 
which is given by ${\rm span} \, \{ \frac{\partial u}{\partial x_i} \}$
(See also Corollary \ref{cor:2.4} below).
This type of non-degeneracy can be applied in various ways.
See for example \cite{FW, NT} for singularly perturbed problems,
\cite{M} for the existence of traveling waves of nonlinear Schr\"odinger equations 
and \cite{St} for the (in)stability of standing waves for nonlinear Schr\"odinger equations.

Our aim of this paper is to prove the uniqueness and the non-degeneracy
of positive radial solutions of \eqref{eq:1.1} for general nonlinearities to which
previous results cannot be applied.
Especially as we have mentioned above, we are interested in the case
where the nonlinear term may have sublinear growth at infinity.

To state our main result, 
we impose the following conditions on the nonlinear term $g$:
\begin{itemize}
\item[\rm(G1)] $g \in C^1[0,\infty)$, $g(0)=0$ and $g'(0)<0$.
\item[\rm(G2)] There exist $b>0$ 
and $\tilde{b} \in (b,\infty]$ 
such that
$g(s)<0$ on $(0,b)$, 
$g(s)>0$ on $(b, \tilde{b})$ and $g'(b)>0$.
If $\tilde{b}<\infty$, then $g(s)<0$ on $(\tilde{b},\infty)$ and $g'(\tilde{b})<0$.
\item[\rm(G3)] There exists $\zeta>b$ such that $\displaystyle G(\zeta)=\int_0^\zeta g(s)\,ds>0$.
\item[\rm(G4)] $K_g(s)$ is decreasing on $(b, \tilde{b})$.
\item[\rm(G5)] $K_g(s) \leq 1$  on $(0, b)$.
\end{itemize}
Here $K_g(s)$ is the {\it growth function} of $g(s)$ which is defined by
\begin{equation}\label{eq:1.2}
 K_g(s) := \frac{s g'(s)}{g(s)}.
\end{equation}
Roughly speaking, the function $K_g$ measures growth orders 
at zero and infinity. For example if $g(s)=-s+s^p$, we can easily see that 
$K_g(0)=1$ and $K_g(\infty)=p$. Especially when $K_g(\infty) \in (0,1)$,
the nonlinear term $g$ has sublinear growth at infinity.
We also note that if $\tilde{b}<\infty$, 
then it follows that $\| u\|_{L^{\infty}} < \tilde{b}$ 
by the Maximum Principle.

In this setting, we obtain the following result.

\begin{theorem}\label{thm:1.1}
Assume (G1)--(G5). Then \eqref{eq:1.1} has at most
one positive radial solution. Moreover 
the corresponding linearized operator $\mathcal{L}=\Delta +g'(u)$ 
does not have 0-eigenvalue in $H^1_{rad}(\mathbb{R}^N)$.
\end{theorem}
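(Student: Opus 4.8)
The plan is to use the classical shooting method for the radial ODE, reducing the PDE to the initial value problem
\begin{equation}\label{eq:plan-ode}
u''+\frac{N-1}{r}u'+g(u)=0,\quad u'(0)=0,\quad u(0)=\alpha>0,
\end{equation}
and to study the dependence of the solution on the initial height $\alpha$. First I would establish the basic classification of solutions by $\alpha$: for $\alpha$ below some threshold the solution cannot rise past the well created by (G1)--(G2) and stays in the region where $g<0$, so it cannot decay to zero; for $\alpha$ large the relevant quantity $G(\alpha)$ is positive by (G3), and standard energy arguments show the solution crosses zero at some finite radius. The target is the unique $\alpha^*$ for which the trajectory tends to $0$ as $r\to\infty$ without crossing, i.e. the ground-state profile. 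The existence of at least one such $\alpha^*$ is the content of the cited existence results; our job is uniqueness, so I would define a nodal/crossing map and a sign condition that distinguishes rapidly-decaying solutions from oscillating ones, and then show monotonicity of this behavior in $\alpha$.

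The heart of the uniqueness argument is a separation/non-intersection lemma for the variational equation. Differentiating \eqref{eq:plan-ode} in $\alpha$, set $w=\partial u/\partial\alpha$, which solves the linearized equation
\begin{equation}\label{eq:plan-lin}
w''+\frac{N-1}{r}w'+g'(u)\,w=0,\quad w(0)=1,\quad w'(0)=0.
\end{equation}
The standard strategy (going back to Coffman, Kwong, McLeod--Serrin, and especially Peletier--Serrin and Kwong--Zhang) is to show that at the ground-state value $\alpha^*$, the function $w$ has \emph{exactly one zero} on $(0,\infty)$, or equivalently that a suitably normalized Wronskian-type quantity involving $u$, $u'$, $w$, and the scaling field $r u'+\tfrac{2}{\,?\,}u$ keeps a fixed sign. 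The monotonicity assumptions (G4)--(G5) on the growth function $K_g(s)=sg'(s)/g(s)$ are exactly what feed into the sign of the source term in a Pohozaev/Wronskian identity: $K_g$ decreasing on the region where $g>0$ forces the comparison function to stay on one side, which yields that distinct ground states cannot coexist. I would construct the comparison function as a linear combination $P:=ru'+c\,w$ (or the classical $P:=u+\tfrac{r}{?}u'$ used in Pohozaev-type identities), compute $LP$ where $L$ is the operator in \eqref{eq:plan-lin}, and show via (G4)--(G5) that the inhomogeneous term has a definite sign so that $P$ cannot vanish at the decay point; counting zeros then gives uniqueness.

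For the non-degeneracy half of the statement, I would observe that a $0$-eigenfunction $\phi\in H^1_{rad}(\mathbb R^N)$ of $\mathcal L=\Delta+g'(u)$ is precisely a radial solution of \eqref{eq:plan-lin} that decays at infinity. The key point is that $w=\partial u/\partial\alpha$ is the \emph{maximal} solution of the linearized equation normalized at the origin, and any radial kernel element must be proportional to the solution with $\phi(0)\neq0$ (since at $r=0$ regularity forces $\phi'(0)=0$, and the two-dimensional solution space of the second-order ODE has only a one-dimensional subspace regular at the origin). Thus a radial $0$-eigenfunction would have to be a multiple of $w$; but the zero-counting established for uniqueness shows $w$ does \emph{not} decay to $0$ at the ground-state $\alpha^*$ (it has the wrong asymptotics, growing or oscillating), so no nontrivial radial kernel element exists. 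The main obstacle I anticipate is precisely the sign analysis of the Wronskian source term under the weak hypotheses (G4)--(G5): because $g$ may be \emph{sublinear} at infinity (so $K_g(\infty)$ can lie in $(0,1)$), the usual super-linearity arguments break down, and one must handle the decay rate of $u$ and the growth of $w$ carefully — in particular controlling the behavior near the zero $b$ of $g$ and near infinity where $g'(0)<0$ governs exponential decay — to guarantee the integral identities converge and keep their sign.
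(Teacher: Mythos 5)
Your outline correctly identifies the paper's framework (shooting in the initial height, the variation $w=\partial u/\partial\alpha$ solving the linearized ODE, a comparison function built from $ru'$ and $u$, Sturm comparison, and zero counting), but the decisive step is missing, and it is precisely the step that constitutes the paper's contribution. The comparison function is $v_\lambda(r)=ru'(r)+\lambda u(r)$ (your ``$P:=ru'+c\,w$'' conflates the variation $w$ with the solution $u$ itself), and it satisfies $v_\lambda''+\frac{N-1}{r}v_\lambda'+g'(u)v_\lambda=I(u,\lambda)$ with $I(s,\lambda)=\lambda s g'(s)-(\lambda+2)g(s)$. Your plan --- ``show via (G4)--(G5) that the inhomogeneous term has a definite sign'' --- is exactly what cannot be done in the sublinear case $K_\infty=\lim_{s\to\tilde b-}K_g(s)<1$, which is the whole point of the theorem: for each fixed $\lambda$, $I(u(r),\lambda)$ changes sign at the radius where $u$ crosses the level $t$ solving $K_g(t)=1+2/\lambda$, and no single $\lambda$ makes it signed on all of $(0,R)$; McLeod's I-theorem, which normally repairs this, fails here. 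The paper's Section 3 (Key Lemma \ref{lem:3.1}) resolves it by constructing a special $\lambda_0$ for which the zero of $v_{\lambda_0}$ coincides \emph{exactly} with the unique zero $r_\delta$ of $\delta$, so that the sign change of $I(u,\lambda_0)$ and the node of the comparison function occur at the same point: one first proves $b<u(r_\delta)<s^*$ (Lemma \ref{lem:3.3}, itself requiring an integration-by-parts identity pairing $u'$ and $\delta'$), then sets $\underline{\lambda}=\Lambda\big(u(r_\delta)\big)$, controls the zeros of $v_\lambda$ on both sides of $r_\delta$ for $\lambda\ge\underline{\lambda}$ (Lemmas \ref{lem:3.4}--\ref{lem:3.6}), and finally takes $\lambda_0=\theta(r_\delta)$ with $\theta=-ru'/u$, showing $\theta'>0$ past $r_\delta$. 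Only then does the Sturm/Picone comparison on $(r_\delta,R)$ yield the contradiction. You flag this sign analysis as ``the main obstacle I anticipate,'' but flagging it is not resolving it; without this construction the proof does not close.

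Two further gaps. First, strict admissibility must be established for \emph{every} admissible $d\in N\cup G$, not only at the ground-state value: the paper's uniqueness argument is a continuation in $d$ from $d_0=\inf(N\cup G)$, showing the zero count $\mathcal{N}(d)\le 1$ on $(d_0,d_1)$ and reaching a contradiction at $d_1=\inf(G\cup P)\cap(d_0,\tilde b)$ in both cases ($\mathcal{N}(d_1)=1$ forces, via strict admissibility and Proposition \ref{prop:2.2} (iii), an interval of $P$ below $d_1$, contradicting $(d_0,d_1)\subset N$; $\mathcal{N}(d_1)\ge 2$ contradicts continuity of zeros). Your appeal to ``monotonicity of this behavior in $\alpha$'' is not literally true and must be replaced by this zero-counting argument, which also needs openness of $N$ and $P$. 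Second, for non-degeneracy your reduction of a radial kernel element to a multiple of $\delta$ is correct and is the paper's argument, but your conclusion that $\delta$ ``grows or oscillates'' at $\alpha^*$ rests on unproved inputs: that $\delta$ has at least one zero for $d\in G$ (proved in Proposition \ref{prop:2.2} (i) via a Picone identity pairing $\delta$ with $u'$), that it has exactly one zero at $d_0$, and then the full strict admissibility $\delta\to-\infty$ of Proposition \ref{prop:2.3} --- i.e., the same missing Key Lemma again.
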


As for the existence of a positive radial solution of \eqref{eq:1.1},
it suffices to assume (G1), (G3) and the following additional conditions (\cite{BGK, BL, HIT}):
\begin{itemize}
\item[\rm(G6)] 
$\displaystyle \limsup_{s \to \infty} \frac{g(s)}{
s^{N+2 \over N-2}} \le 0$ if $N \ge 3$.

$\displaystyle \limsup_{s\to\infty}\frac{g(s)}{e^{\alpha s^2}}\leq 0$ for 
any $\alpha>0$ if $N=2$.
\end{itemize}
Although the existence of a positive radial solution can be obtained
under weaker assumptions, we don't give precise statements here.
By Theorem \ref{thm:1.1} and Corollary \ref{cor:2.4} below, we have the following result.

\begin{corollary}\label{cor:1.2}
Assume (G1)--(G6). 
Then the positive radial solution of \eqref{eq:1.1} is unique and non-degenerate.
\end{corollary}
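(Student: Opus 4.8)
The plan is to assemble Corollary \ref{cor:1.2} from three pieces: an existence result that supplies \emph{at least} one positive radial solution, Theorem \ref{thm:1.1} which supplies \emph{at most} one, and a spherical-harmonic decomposition (packaged as Corollary \ref{cor:2.4}) that upgrades the radial non-degeneracy of Theorem \ref{thm:1.1} to full non-degeneracy in $H^1(\mathbb{R}^N)$. Since (G1)--(G6) are assumed, all of (G1)--(G5) hold and Theorem \ref{thm:1.1} applies verbatim; the only genuinely new ingredient relative to Theorem \ref{thm:1.1} is existence, which is precisely where (G6) enters.

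For existence and uniqueness, I would first invoke the variational existence theory of \cite{BGK, BL, HIT}: under (G1), (G3) together with the growth control (G6), the functional $E$ admits a ground state, and (as recalled in the Introduction) any ground state is positive and radially symmetric. This produces a positive radial solution $u$ of \eqref{eq:1.1}. Theorem \ref{thm:1.1} then guarantees there is at most one such solution, so $u$ is \emph{the} unique positive radial solution; by the same positivity and symmetry remarks it is simultaneously the unique ground state.

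For non-degeneracy, Theorem \ref{thm:1.1} only asserts that $\mathcal{L}=\Delta+g'(u)$ has no $0$-eigenvalue in $H^1_{rad}(\mathbb{R}^N)$, so the remaining task is to control the non-radial part of $\ker\mathcal{L}$. I would decompose any $\phi\in\ker\mathcal{L}\subset H^1(\mathbb{R}^N)$ into spherical harmonics $\phi=\sum_{k\ge0}\psi_k(r)Y_k(\theta)$, where the $Y_k$ are eigenfunctions of the Laplace--Beltrami operator on $S^{N-1}$ with eigenvalues $\lambda_k=k(k+N-2)$. Each radial coefficient then solves the reduced equation $-\psi_k''-\tfrac{N-1}{r}\psi_k'+\tfrac{\lambda_k}{r^2}\psi_k-g'(u)\psi_k=0$. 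The mode $k=0$ is exactly the radial sector and is killed by Theorem \ref{thm:1.1}. For $k=1$ one has $\lambda_1=N-1$, and differentiating \eqref{eq:1.1} in $x_i$ shows that $\partial u/\partial x_i=u'(r)\,x_i/r$ lies in $\ker\mathcal{L}$, so $u'(r)$ solves the reduced $k=1$ equation; since $u$ is a decaying positive radial solution we have $u'(r)<0$ for $r>0$, hence $u'$ is sign-definite and is therefore the principal (simple) eigenfunction of the $k=1$ operator, forcing its lowest eigenvalue to be $0$ and its kernel to be one-dimensional. Running over the $N$ first harmonics yields precisely $\mathrm{span}\{\partial u/\partial x_i\}$. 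For $k\ge2$ the strictly larger centrifugal potential $\lambda_k/r^2>\lambda_1/r^2$ raises the lowest eigenvalue strictly above $0$, so these modes contribute nothing. Thus $\ker\mathcal{L}=\mathrm{span}\{\partial u/\partial x_i\}$, which is exactly the content of Corollary \ref{cor:2.4} and gives non-degeneracy.

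The routine pieces are the existence citation and the ``at most one'' input. The main obstacle, and the part that genuinely exploits the ODE structure, is the non-degeneracy upgrade: one must know that the sign-definiteness of $u'(r)$ makes it the ground state of the $k=1$ problem (so that the $k=1$ kernel has the expected dimension rather than being accidentally larger), and that monotonicity of the reduced operators in $k$ eliminates the higher modes. Both are encapsulated in Corollary \ref{cor:2.4}; once it is available, Corollary \ref{cor:1.2} follows by simply combining existence, Theorem \ref{thm:1.1}, and Corollary \ref{cor:2.4}.
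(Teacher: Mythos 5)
Your proposal is correct and, at the level of Corollary \ref{cor:1.2} itself, is exactly the paper's argument: the paper proves the corollary simply by combining the existence theory of \cite{BGK, BL, HIT} (which requires only (G1), (G3) and (G6)), the uniqueness and radial non-degeneracy of Theorem \ref{thm:1.1}, and the kernel identification of Corollary \ref{cor:2.4}. The one place you diverge is inside your sketch of the Corollary \ref{cor:2.4} ingredient. The paper eliminates the higher spherical-harmonic modes by an ODE identity: multiplying the mode equation \eqref{eq:2.4.2} and the differentiated equation \eqref{eq:2.4.3} for $u'$ by $r^{N-1}u'$ and $r^{N-1}\varphi_i$ respectively, subtracting and integrating yields \eqref{eq:2.4.4}, and for $i \ge N+1$ the sign of $N-1-\lambda_i$ together with $u'<0$ forces $\varphi_i \equiv 0$, the radial mode being killed by Theorem \ref{thm:1.1}. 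You instead argue variationally: $u'$ is a sign-definite zero mode of the $k=1$ reduced operator, hence its principal eigenfunction with simple lowest eigenvalue $0$, and monotonicity of the centrifugal term $\lambda_k/r^2$ in $k$ makes the quadratic forms for $k\ge 2$ strictly positive on nonzero functions, emptying those sectors. The two mechanisms exploit the same structural facts ($u'<0$ solves the $\lambda_1=N-1$ equation, and $\lambda_k>N-1$ for $k\ge 2$), and both are standard; your version is shorter but imports two spectral-theoretic inputs that the paper's self-contained integration-by-parts avoids, namely that a sign-definite $L^2$ eigenfunction of the singular half-line operator is the ground state, and that the coefficients $\psi_k$ decay fast enough to justify the form computation --- decay the paper secures first as Corollary \ref{cor:2.4} (iii). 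Since you ultimately defer to Corollary \ref{cor:2.4} anyway, the corollary-level proof is complete and matches the paper's.
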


Now since we are interested in the positive radial solutions, \eqref{eq:1.1}
reduces to the following ordinary differential equation:
\begin{equation}\label{eq:1.3}
\left\{
\begin{array}{l}
\displaystyle u'' + \frac{N-1}{r} u' + g(u) = 0 \text{ for } r>0, \\
\noalign{\smallskip}
 u'(0)=0, \quad u(r) \rightarrow 0 \text{ as } r \rightarrow \infty, \\
\noalign{\smallskip}
 u(r) > 0 \text{ for } r \geq 0.
\end{array}
\right.
\end{equation}
To obtain the uniqueness of positive radial solutions of \eqref{eq:1.3},
we apply the shooting method as in \cite{Kw, Mc}. More precisely,
let us consider the initial value $u(0)=d$ for $d>0$ and denote by
$R(d)$ the first zero of $u=u(r,d)$.
Then it suffices to show that there exists a unique $d_0>0$ such that
$R(d_0)=\infty$. To this end, we consider the variation
$\delta(r,d)= \frac{ \partial u}{\partial d}(r,d)$.
As in \cite{Kw, Mc}, 
the key is to construct a suitable comparison function which has the same zero as $\delta$.
Although a general method based on {\it I-theorem} has been established
in \cite{Mc}, this method may fail if the nonlinear term has a sublinear growth at infinity.
Thus our main purpose is the construction of the comparison function without using I-theorem.
(See also Remarks \ref{rem:2.6}, \ref{rem:3.7} below.)

As an application of Theorem \ref{thm:1.1}, we study 
the uniqueness and the non-degeneracy of ground states for the following 
{\it modified} Schr\"odinger equation:
\begin{equation}\label{eq:1.4}
-\Delta u+\lambda u-\kappa u \Delta (|u|^2)
=|u|^{p-1}u \quad \hbox{in} \ \mathbb{R}^N,
\end{equation}
where $N \ge 2$, $\lambda>0$, $\kappa>0$ and $p>1$. 
Then two advances on previous results can be made: 
One is the uniqueness for $N=2$ which was not studied in \cite{ASW1},
and the other is the non-degeneracy for $1<p<3$
which was not covered in previous results \cite{ASW1, S}.
(See also Remark \ref{rem:4.10} below.)
The nonlinear term $|u|^{p-1}u$ in \eqref{eq:1.4} has still superlinear growth
even when $1<p<3$. But in this case, as we will see in Remark \ref{rem:4.4} below,
problem \eqref{eq:1.4} has a sublinear structure because of the quasilinear term $u\Delta(|u|^2)$.

This paper is organized as follows. In section 2,
we classify the set of initial values as in \cite{Kw, Mc}
and prove Theorem \ref{thm:1.1}, 
leaving the proof of the main proposition into Section 3.
Section 3 is the most technical part and we will construct a suitable
comparison function by using detailed ODE analysis.
Finally in Section 4, we study the uniqueness of positive radial solutions for 
a class of quasilinear elliptic problems.
Especially we will apply Theorem \ref{thm:1.1} to obtain 
a completely optimal result for the uniqueness and the non-degeneracy of 
complex valued ground states of modified Schr\"odinger equations in Subsection 4.4.

\section{Classification of the set of initial values and Preliminaries}

In this section, we give some preliminaries and prove
Theorem \ref{thm:1.1}. To this end, we study the structure of radial solutions of the following ODE:
\begin{equation}
\label{eq:2.1}
\left\{
\begin{array}{ll}
\displaystyle u''+{N-1 \over r}u'+g(u)=0, & r \in (0,\infty), \\
\noalign{\smallskip}
u(0)=d >0. & \quad 
\end{array}
\right.
\end{equation}
For $d>0$, we denote the unique solution of \eqref{eq:2.1} by $u(\cdot,d)$.
Then as in \cite{Kw} and \cite{Mc}, 
we can classify the sets of initial values as follows:
\begin{align*}
N&= \{ 0<d<\tilde{b} \ ; \ \hbox{there exists} \ R=R(d)\in(0,\infty) \ 
\hbox{such that} \ u(R,d)=0 \}. \\
G&= \{ 0<d<\tilde{b} \ ; \ u(r,d)>0 \ \hbox{for all} \ r>0 \ \hbox{and} \ 
\lim_{r\to \infty} u(r,d)=0 \}. \\
P&=\{ 0<d<\tilde{b} \ ; \ u(r,d)>0 \ \hbox{for all} \ r>0 \ \\
&\hspace{7.2em} \hbox{but} \ 
u(r,d) \ \hbox{does not converge to zero at infinity}
\}. 
\end{align*}
When $d \in N$, we may assume that $R(d)$ is a first zero of $u(r,d)$.
Then it follows that  $u'(R)<0$. 
Furthermore when $d\in G$, we set $R(d)=\infty$.
In this setting, we have the following result.

\begin{proposition}\label{prop:2.1} We obtain the following properties.
\begin{itemize}
\item[\rm(i)] The sets $N$, $G$ and $P$ are disjoint,
$N \cup P \cup G= (0,\tilde{b})$ and $(0,b] \subset P$.
Especially if $u$ is a solution of \eqref{eq:1.3}, then $u(0)>b$.
Furthermore $N$ and $P$ are both open in $(0,\tilde{b})$.
\item[\rm(ii)] If $d\in N \cup G$, then the corresponding solution
$u(r,d)$ of \eqref{eq:2.1} is monotone decreasing with respect to $r$.
\item[\rm(iii)] If $d\in G$, then 
$u^{(k)}(r,d), k=0,1,2$ 
decay exponentially at infinity. Moreover it follows that
\begin{equation}\label{eq:2.2}
\lim_{r \to \infty} \frac{u'(r,d)}{u(r,d)} = - \sqrt{-g'(0)} <0.
\end{equation}
\end{itemize}
\end{proposition}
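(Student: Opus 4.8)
The whole proposition is driven by a single energy identity, so I would set that up first. For fixed $d$ write $u(r)=u(r,d)$ and put $H(r):=\frac12 u'(r)^2+G(u(r))$. Differentiating and using \eqref{eq:2.1},
$$H'(r)=u'(r)\bigl(u''(r)+g(u(r))\bigr)=-\frac{N-1}{r}\,u'(r)^2\le 0,$$
so $H$ is non-increasing with $H(r)\le H(0)=G(d)$. Alongside this I would use the integrated form $\bigl(r^{N-1}u'\bigr)'=-r^{N-1}g(u)$, i.e. $r^{N-1}u'(r)=-\int_0^r s^{N-1}g(u(s))\,ds$, together with the sign information from (G1)--(G3): $G$ is strictly decreasing on $(0,b)$ with $G(0)=0$, hence $G<0$ on $(0,b]$, while $G$ is strictly increasing on $(b,\tilde b)$ and eventually positive by (G3).

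For part (i), disjointness and $N\cup P\cup G=(0,\tilde b)$ are immediate from the definitions once global existence is known, the latter being standard from the a priori bound $G(u(r))\le G(d)$, which (when $\tilde b<\infty$) keeps $u$ strictly below $\tilde b$ and in particular bounded. To prove $(0,b]\subset P$, take $d\in(0,b]$; then $G(d)<0$, so $H(r)\le G(d)<0$ gives $G(u(r))\le G(d)<0$ for all $r$. Since $G$ is continuous with $G(0)=0$, this forces $u(r)$ into a compact subset of $(0,\infty)$ bounded away from $0$; hence $u$ neither vanishes nor tends to $0$, i.e. $d\in P$. In particular any solution of \eqref{eq:1.3} lies in $G$ and therefore satisfies $u(0)>b$. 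Openness of $N$ is the usual transversality argument: if $d_0\in N$ with first zero $R$, then $u'(R)<0$ (otherwise $(u(R),u'(R))=(0,0)$ and uniqueness at the regular point $r=R$ would give $u\equiv0$), so by continuous dependence on $d$ the nearby solutions also cross $0$ transversally near $R$.

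The one genuinely delicate point of part (i), and where I expect the main difficulty, is the openness of $P$. Here I would use a local Lyapunov function near the rest point $b$. Since $g'(b)>0$, $G$ has a strict local minimum at $b$, so there is $\delta>0$ and $c>0$ with $G(s)-G(b)\ge c\,(s-b)^2$ on $[b-\delta,b+\delta]$; set $\widetilde H(r):=\frac12 u'(r)^2+\bigl(G(u(r))-G(b)\bigr)$, which is non-increasing and nonnegative as long as $u$ stays in $[b-\delta,b+\delta]$. For $d_0\in P$ I would first show $u(r,d_0)\to b$: the energy $H(r)$ decreases to a limit while the friction coefficient $\frac{N-1}{r}\to0$, and a LaSalle-type argument shows the $\omega$-limit set in the $(u,u')$-plane is a connected invariant subset of a single level set of the limiting autonomous system $v''+g(v)=0$, hence a single equilibrium; as $u$ stays positive and bounded away from $0$ and $\tilde b$ is a local maximum of $G$, the only possibility is $u\to b$. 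Consequently $\widetilde H(r_1)$ can be made as small as we like for $r_1$ large. Choosing $r_1$ so that the forward-invariant sublevel set $\{\widetilde H\le\widetilde H(r_1)\}$ lies in the trapping neighborhood of $b$, continuous dependence of $(u(r_1,d),u'(r_1,d))$ on $d$ shows that all nearby $d$ land in the same sublevel set at $r=r_1$ and are therefore trapped near $b$ for all $r\ge r_1$; such solutions stay positive and do not tend to $0$, so they lie in $P$.

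For part (ii), let $d\in N\cup G$; by the above $d>b$, so $g(d)>0$ and $u''(0)=-g(d)/N<0$, i.e. $u$ is initially decreasing. Suppose $u$ had a critical point in $(0,R(d))$ and let $r_1$ be the first one; it is a local minimum, and the integrated equation gives $\int_0^{r_1}s^{N-1}g(u(s))\,ds=0$. Since $u$ decreases from $d>b$ on $(0,r_1)$, were $u(r_1)\ge b$ the integrand would stay positive, a contradiction; hence $u(r_1)<b$, so $H(r_1)=G(u(r_1))<0$. For $d\in N$ this contradicts $H(R)=\frac12 u'(R)^2\ge0$ with $H(R)\le H(r_1)$; for $d\in G$ it contradicts $\lim_{r\to\infty}H(r)=G(0)=0$ (obtained along a sequence where $u'\to0$) with $H(r)\le H(r_1)<0$. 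Hence $u'<0$ on $(0,R(d))$. For part (iii), fix $d\in G$; by (ii) $u$ decreases to $0$, so $u>0$, $u'<0$ for large $r$. Writing $\mu:=-g'(0)>0$ and $q(r):=g(u(r))/u(r)\to-\mu$, the Riccati variable $v:=u'/u$ satisfies $v'=-\frac{N-1}{r}v-q(r)-v^2$, whose limiting equation is $v'=\mu-v^2$. Since $u\to0^+$ monotonically, $v$ is negative and bounded, and $u$ being the decaying solution forces $v\to-\sqrt{\mu}=-\sqrt{-g'(0)}$, which is \eqref{eq:2.2}. Exponential decay of $u$ then follows from $u'/u\le-\sqrt\mu+\varepsilon<0$ eventually, and that of $u'$ and $u''$ from $u'=vu$ and $u''=-\frac{N-1}{r}u'-g(u)$; the only care needed is to justify, by standard asymptotic integration, that the solution lands on the branch $v\to-\sqrt\mu$ rather than $v\to+\sqrt\mu$.
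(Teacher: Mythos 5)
The paper itself gives no proof of Proposition \ref{prop:2.1} (it is stated with a pointer to \cite{Kw, Mc}), so your argument has to stand on its own. Most of it does, and along the standard lines those references use: the energy $H=\frac12 (u')^2+G(u)$ with $H'=-\frac{N-1}{r}(u')^2$, the a priori bound keeping $u$ below $\tilde b$, the observation that $(0,b]\subset P$ because $H\le G(d)<0$ keeps $u$ away from $\{G\ge 0\}$, the transversality argument for openness of $N$, the identity $\int_0^{r_1}s^{N-1}g(u)\,ds=0$ forcing $u(r_1)<b$ at a first critical point (which kills $d\in N\cup G$ in part (ii)), and the Riccati analysis of $v=u'/u$ for \eqref{eq:2.2} and the exponential decay in (iii) are all correct; the loose end you flag in (iii) is genuinely routine, since $v<0$ excludes the branch $+\sqrt{\mu}$ and positivity of $u$ excludes finite-time blow-down of $v$.

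There is, however, one step that fails as written, precisely at the point you identified as delicate: in the openness of $P$ you argue that the $\omega$-limit set is ``a connected invariant subset of a single level set of the limiting autonomous system $v''+g(v)=0$, hence a single equilibrium.'' That inference is invalid because the limiting system is Hamiltonian: \emph{every} one of its orbits --- in particular every periodic orbit encircling the center $(b,0)$, and any homoclinic loop --- lies inside a single level set, so connected invariant subsets of level sets are very far from being only equilibria. LaSalle-type reasoning alone therefore does not give $u(r,d_0)\to b$. The repair is nearby and uses what you already set up. First, by the same computation as in your part (ii), either $u$ is eventually monotone, in which case $u\downarrow\ell$ with $g(\ell)=0$ and $\ell>0$, so $\ell=b$; or $u$ has a first interior critical point $r_1$, necessarily a strict local minimum with $u(r_1)<b$, whence $H(r)\le H(r_1)=G\big(u(r_1)\big)<0$ traps the orbit in the potential well around $b$ for $r\ge r_1$. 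Second, rule out $H_\infty:=\lim_{r\to\infty}H(r)>G(b)$: in the well the level set $\{(s,w)\,;\,\frac12 w^2+G(s)=H_\infty\}$ is a closed orbit of the limit system with finite period and $\int w^2$ bounded below per period, so the trajectory would satisfy $\int_R^{2R}(u')^2\,dr\ge cR$ for all large $R$, making $\int^\infty \frac{N-1}{r}(u')^2\,dr=H(0)-H_\infty$ diverge over dyadic blocks --- a contradiction. Hence $H(r)\to G(b)$, which is exactly the smallness of $\widetilde H(r_1)$ that your (correct) trapping-plus-continuous-dependence argument needs; note you never need the full convergence $u\to b$, only $\widetilde H(r_1)$ small. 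With this substitution the proof of openness of $P$, and with it the whole proposition, goes through.
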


Next in order to study the uniqueness and the non-degeneracy 
of the positive radial solution of \eqref{eq:2.1}, 
we consider the following linearized problem:
\begin{equation}
\label{eq:2.3}
\left\{
\begin{array}{l}
\displaystyle \delta'' + \frac{N-1}{r} \delta' + g'(u) \delta=0, \\
\noalign{\smallskip}
\delta(0)=1, \ \delta'(0)=0.
\end{array}
\right.
\end{equation}
For a solution $u=u(\cdot, d)$ of \eqref{eq:2.1}, 
we denote the unique solution of \eqref{eq:2.3} by $\delta(\cdot,d)$ 
and sometimes we denote it by $\delta(\cdot)$ for simplicity.
Our first purpose is to establish the following result.

\begin{proposition}\label{prop:2.2}
Suppose (G1)-(G4) hold.
\begin{itemize}
\item[\rm(i)] If $d \in G$, then $\delta(r,d)$ has
at least one zero in $(0, \infty)$.
\item[\rm(ii)] Let $d_0= \inf( N \cup G)$.
Then $d_0 \in G$ and $\delta(r, d_0)$ has exactly one zero $r_{\delta}\in (0,\infty)$.
Moreover it follows that
\begin{equation*}
\delta(r)
\left\{
\begin{array}{cl}
>0 & \text{ if } \ 0 < r < r_\delta, \\
=0 & \text{ if } \ r=r_\delta, \\
<0 & \text{ if } \ r_\delta < r <\infty.
\end{array}
\right.
\end{equation*}
\item[\rm(iii)] Let $d\in G$ and suppose that $\delta(r) \to -\infty$
as $r \to \infty$. Then there exists $\varepsilon>0$ such that
$(d,d+\varepsilon) \subset N$ and $(d-\varepsilon,d) \subset P$.
\end{itemize}
\end{proposition}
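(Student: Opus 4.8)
The plan is to use that $\delta=\partial u/\partial d$ solves the linearized equation \eqref{eq:2.3} and to compare it, through a Wronskian, with the radial derivative $w:=u'$, which (differentiating \eqref{eq:2.1} in $r$) solves
$$
w''+\frac{N-1}{r}w'+\Bigl(g'(u)-\frac{N-1}{r^2}\Bigr)w=0 .
$$
Writing both equations in divergence form and taking the standard bilinear combination, the quantity
$$
V(r):=r^{N-1}\bigl(u'(r)\delta'(r)-u''(r)\delta(r)\bigr)
$$
satisfies $V(0)=0$ (since $u''(0)=-g(d)/N$ is finite) and $V'(r)=-(N-1)\,r^{N-3}\,u'(r)\,\delta(r)$. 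The second, ``growth-function'' Wronskian $A(r):=r^{N-1}\bigl(u\delta'-u'\delta\bigr)$, with $A'(r)=r^{N-1}g(u)\bigl(1-K_g(u)\bigr)\delta$, is the object through which (G4) will later enter, but it is not needed for (i).

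\emph{Part (i).} I would argue by contradiction: suppose $\delta(r)>0$ for all $r>0$. By Proposition \ref{prop:2.1}(ii) we have $u'<0$ on $(0,\infty)$, so $V'>0$ there; since $V(0)=0$ this gives $V(r)>0$ for every $r>0$, whence $\liminf_{r\to\infty}V(r)>0$. On the other hand, since $g'(u(r))\to g'(0)<0$, a positive (hence nonoscillatory) solution of \eqref{eq:2.3} is asymptotically either exponentially decaying or exponentially growing at rate $\mu:=\sqrt{-g'(0)}$. Feeding the exponential asymptotics of $u,u',u''$ from Proposition \ref{prop:2.1}(iii) (in particular \eqref{eq:2.2}) into the leading terms of $u'\delta'-u''\delta$ shows $V(r)\to0$ if $\delta$ decays, while $V(r)$ tends to a strictly negative constant (the computation gives $-2\mu^2\beta\gamma$, with $\beta,\gamma>0$ the leading coefficients of $\delta,u$) if $\delta$ grows. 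Either way $\lim_{r\to\infty}V(r)\le0$, contradicting $\liminf V>0$. Hence $\delta$ vanishes somewhere in $(0,\infty)$.

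\emph{Part (ii).} First I would locate $d_0$. Since $(0,b]\subset P$ and $N,P$ are open in $(0,\tilde b)$ (Proposition \ref{prop:2.1}(i)), the infimum $d_0=\inf(N\cup G)$ lies in $[b,\tilde b)$, and the usual openness argument excludes $d_0\in N$ (a whole neighbourhood would be in $N$, contradicting the infimum) and $d_0\in P$ (a right neighbourhood would avoid $N\cup G$, again contradicting the infimum); thus $d_0\in G$. By Part (i), $\delta(\cdot,d_0)$ has a first zero $r_\delta$, with $\delta>0$ on $[0,r_\delta)$ and $\delta'(r_\delta)<0$. It then remains to show $\delta$ has \emph{no} further zero, and this is the heart of the matter: it cannot come from the Wronskian with $u'$, because $u'$ does not vanish on $(0,\infty)$ and a Sturm comparison against it only bounds the oscillation of $\delta$ from below. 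To bound it from above I would invoke the comparison function built in Section 3 — a solution-like function $\Phi$ with exactly one sign change, constructed from $u,u'$ using the monotonicity of $K_g$ guaranteed by (G4) — and compare $\delta$ with $\Phi$ in Sturm--Picone fashion to conclude that $\delta$ changes sign at most once. With Part (i) this yields exactly one zero, and the displayed sign pattern follows from $\delta(0)=1>0$ and the simplicity of $r_\delta$.

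\emph{Part (iii).} Here I would use continuous dependence of $u(\cdot,d)$ and $\delta(\cdot,d)=\partial_d u(\cdot,d)$ on $d$, together with the transversality encoded by $\delta(r,d)\to-\infty$. Fixing a large radius $R_*$ at which $u(R_*,d)$ is small, $u'(R_*,d)<0$, and $\delta(R_*,d),\delta'(R_*,d)$ are large and negative, the $C^1$ map $d'\mapsto\bigl(u(R_*,d'),u'(R_*,d')\bigr)$ has derivative $\bigl(\delta(R_*,d),\delta'(R_*,d)\bigr)$, so increasing $d$ strictly lowers and steepens the solution at $R_*$ while decreasing it raises the solution. An energy/comparison argument on $[R_*,\infty)$, using the exponential regime of Proposition \ref{prop:2.1}(iii), then shows that for $d'>d$ close the solution is driven below zero (so $d'\in N$) and for $d'<d$ close it stays positive without decaying (so $d'\in P$); the openness of $N$ and $P$ upgrades these one-sided conclusions to the intervals $(d,d+\varepsilon)\subset N$ and $(d-\varepsilon,d)\subset P$. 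The single genuinely hard step is the \emph{at most one zero} assertion of Part (ii) — ruling out a second oscillation of $\delta(\cdot,d_0)$ — which is exactly the construction deferred to Section 3.
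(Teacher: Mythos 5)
Your part (i) is correct and is, in substance, the paper's own argument in Wronskian rather than Picone form: the paper compares $\delta$ with $u'$ via the identity for $\bigl[r^{N-1}u'u''-r^{N-1}(u')^2\delta'/\delta\bigr]'$, while you use the equivalent quantity $V=r^{N-1}(u'\delta'-u''\delta)$ with $V(0)=0$, $V'=-(N-1)r^{N-3}u'\delta>0$; both hinge on the same dichotomy for nonoscillatory solutions of \eqref{eq:2.3} (which the paper, like you, takes from \cite[Lemma 2(b)]{Mc}), and your limit computations ($V\to 0$ in the decaying case, $V\to-2\mu^2\beta\gamma<0$ in the growing case) check out. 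Your localization of $d_0\in G$ in (ii) via openness of $N$ and $P$ is also the standard and correct step.

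The genuine gap is the ``at most one zero'' step of (ii). Invoking ``the comparison function built in Section 3'' there is circular: the entire Section 3 machinery operates under the standing hypothesis of Proposition \ref{prop:2.3} that $\delta(\cdot,d)$ has \emph{exactly one} zero in $(0,R)$. Concretely, Lemma \ref{lem:3.3} needs $r_\delta$ to be the unique zero with $\delta\to 0$; the Claim in the proof of Lemma \ref{lem:3.5} and the proof of Lemma \ref{lem:3.6} obtain their contradictions precisely by producing a second zero of $\delta$, i.e.\ they assume what you are trying to prove; and Lemma \ref{lem:3.1} is only the endpoint of that chain. (Note also that Section 3 uses (G5), whereas Proposition \ref{prop:2.2} is stated under (G1)--(G4) alone.) The paper closes this step differently: it proves (ii) and (iii) ``by similar arguments in \cite{Mc} Lemmas 3(b) and 10'' (and by \cite{OS} when $\tilde b<\infty$), where the count of zeros of $\delta(\cdot,d_0)$ is extracted from the extremal character of $d_0=\inf(N\cup G)$ --- approximation of $u(\cdot,d_0)$ by $P$-solutions from below --- not from any $v_\lambda$-type comparison; this independent mechanism is exactly what your proposal is missing. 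Your sketch of (iii) has the right first-order mechanism ($u(\cdot,d')\approx u(\cdot,d)+(d'-d)\delta(\cdot,d)$ with $\delta\to-\infty$ forcing a zero for $d'>d$ and preventing decay for $d'<d$), but the expansion in $d$ is only locally uniform in $r$, and controlling the error on unbounded $r$-intervals is the actual content of \cite[Lemma 10]{Mc}; moreover the closing appeal to openness of $N$ and $P$ does no work --- the perturbation argument, once made uniform for all $d'$ near $d$, already yields the intervals $(d,d+\varepsilon)\subset N$ and $(d-\varepsilon,d)\subset P$ directly.
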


\begin{proof}
When $\tilde{b}<\infty$, the proof can be found in \cite{OS}.
Thus it suffices to consider the case $\tilde{b}=\infty$.

To prove (i), 
we suppose by contradiction that $\delta(r,d)>0$ on $(0, \infty)$.
First differentiating \eqref{eq:2.1}, we have
\begin{equation*}
u'''+ \frac{N-1}{r} u'' + \left(g'(u)- \frac{N-1}{r^2} \right)u'=0.
\end{equation*}
Then by a direct calculation, we obtain the following Picone identity:
\begin{equation*}
\left[
r^{N-1} u' u'' - r^{N-1} (u')^2 \frac{\delta'}{\delta}
\right]'
=
\frac{N-1}{r} (u')^2 + r^{N-1}\left( u'' - u' \frac{\delta'}{\delta}\right)^2 
\ \hbox{on} \ (0,\infty).
\end{equation*}
Integrating it over $(r, \tilde{r})$
for any $(r,\tilde{r}) \subset (0,\infty)$, we get
\begin{align*}
& \int_r^{\tilde{r}} \frac{N-1}{s} (u')^2 \,ds \\
&\leq
\tilde{r}^{N-1} \left(u'(\tilde{r}) u''(\tilde{r}) - (u')^2(\tilde{r}) \frac{\delta'(\tilde{r})}{\delta(\tilde{r})} \right)
- r^{N-1} \left( u'(r) u''(r) - (u')^2(r) \frac{\delta'(r)}{\delta(r)} \right).
\end{align*}

Now since both $u$ and $u'$ decay exponentially at infinity, 
$u''=-\frac{N-1}{r} u' - g(u)$ also decays exponentially.
Moreover it follows that either $\delta(\tilde{r}), \delta'(\tilde{r}) \to 0$ 
or $\delta(\tilde{r}), \delta'(\tilde{r}) \to \infty$ as $\tilde{r} \to \infty$.
(See \cite{Mc} Lemma 2 (b) for the proof.)
In the former case, $\delta$, $\delta'$ decay exponentially 
and $\delta'/\delta \to -\sqrt{-g'(0)}$ as $\tilde{r} \to \infty$.
In the latter case, we also have 
$\delta'(\tilde{r})/\delta(\tilde{r})>0$ for large $\tilde{r}$.
Thus in both cases, we obtain 
\begin{equation*}
\limsup_{\tilde{r} \to \infty}
\tilde{r}^{N-1} \left(u'(\tilde{r}) u''(\tilde{r}) - (u')^2(\tilde{r}) \frac{\delta'(\tilde{r})}{\delta(\tilde{r})} \right) \leq 0.
\end{equation*}
On the other hand since 
$\delta(0)=1$, $\delta'(0)=0$ and $u'(0)=0$, we have from \eqref{eq:2.1} that
\begin{align*}
& \lim_{r \to 0}
r^{N-1} \left( u'(r) u''(r) - (u')^2(r) \frac{\delta'(r)}{\delta(r)} \right) \\
= &
\lim_{r \to 0} r^{N-1} u'(r) \left( - \frac{N-1}{r} u'(r) -  g(u) \right) =0.
\end{align*}
Thus it follows that 
\begin{align*}
\int_0^\infty \frac{N-1}{s} (u')^2 \,ds \leq 0.
\end{align*} 
This is a contradiction and hence $\delta$ has at least one zero.

Finally proofs of (ii) and (iii) 
can be done by similar arguments in \cite{Mc} Lemmas 3 (b) and 10.
\end{proof}

We note that if $\delta(r,d)$ has exactly one zero in $(0,R)$ 
for $d \in N \cup G$, then $d$ is called {\it admissible}.
Moreover if $d$ is admissible and $\delta(R,d)<0$ for $d \in N$ or
$\delta(r) \to -\infty$ as $r \to \infty$ for $d\in G$, 
then $d$ is said to be {\it strictly admissible}.
(See \cite{Kw} and \cite{Mc}.)
Our goal is to prove the following result 
which shows the strict admissibility of any admissible $d \in N\cup G$.
We will give its proof in Section 3.

\begin{proposition}\label{prop:2.3}
Assume (G1)-(G5) and let $d\in N \cup G$.
Suppose further that $\delta(r,d)$ has exactly one zero in $(0,R)$.
Then it follows that $\delta(R,d)<0$ for $d \in N$ or
$\displaystyle \lim_{r \to \infty} \delta(r) =-\infty$ for $d\in G$.
\end{proposition}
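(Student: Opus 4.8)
My plan is to argue by contradiction using a Sturm--Picone type comparison between $\delta$ and a comparison function $w$ built from the solution $u$ itself. Throughout I write $\mathcal{L}=\frac{d^2}{dr^2}+\frac{N-1}{r}\frac{d}{dr}+g'(u)$, so that $\mathcal{L}\delta=0$ by \eqref{eq:2.3}, and I record the two identities that follow from a direct computation using \eqref{eq:2.1}:
\[
\mathcal{L}(ru')=-2g(u),\qquad \mathcal{L}u=u\,g'(u)-g(u)=g(u)\bigl(K_g(u)-1\bigr).
\]
For any candidate $w$ I introduce the Wronskian-type quantity
\[
W(r)=r^{N-1}\bigl(w'(r)\delta(r)-w(r)\delta'(r)\bigr),\qquad W'(r)=r^{N-1}(\mathcal{L}w)(r)\,\delta(r).
\]
Since $\delta(0)=1$, $\delta'(0)=0$, $u'(0)=0$ and each $w$ below is $O(r)$ with bounded derivative near the origin, one checks directly that $W(0)=0$. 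By Proposition \ref{prop:2.1}(ii) the solution $u$ is decreasing, and since $(0,b]\subset P$ we have $d>b$, so $u$ crosses the level $b$ at a unique radius $r_b$, with $g(u)>0$ on $(0,r_b)$ and $g(u)<0$ on $(r_b,R)$.

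The comparison function will be of the form $w=ru'+c\,u$, with $c>0$ constant in the model case and, more generally, a positive coefficient allowed to depend on $r$. For constant $c$ the identities above give $\mathcal{L}w=g(u)\bigl[c(K_g(u)-1)-2\bigr]$, and since $u(R)=0$, $u'(R)<0$ for $d\in N$ one has $w(R)=Ru'(R)<0$. The objective is to choose the coefficient so that $(\mathcal{L}w)\,\delta\le0$ on $(0,R)$; as $\delta>0$ on $(0,r_\delta)$ and $\delta<0$ on $(r_\delta,R)$ by hypothesis, this amounts to arranging $\mathcal{L}w\le0$ before $r_\delta$ and $\mathcal{L}w\ge0$ after it. Granting this, $W$ is non-increasing, so $W(R)\le W(0)=0$. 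On the other hand, if I assume for contradiction that $\delta(R)=0$, then $\delta<0$ on $(r_\delta,R)$ forces $\delta'(R)\ge0$, and $\delta'(R)=0$ would give $\delta\equiv0$ by uniqueness for \eqref{eq:2.3}; hence $\delta'(R)>0$ and $W(R)=-R^{N-1}w(R)\delta'(R)>0$, a contradiction. Thus $\delta(R)<0$ for $d\in N$.

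For $d\in G$ I would run the same comparison on $(0,\infty)$, using the exponential decay of $u,u',u''$ from Proposition \ref{prop:2.1}(iii). By Proposition \ref{prop:2.2} the only alternatives are $\delta\to0$ or $\delta\to-\infty$; assuming $\delta\to0$, Proposition \ref{prop:2.2} shows $\delta,\delta'$ decay exponentially, so every factor of $W$ decays exponentially against the polynomial weight and $W(r)\to0$ as $r\to\infty$. A non-increasing $W$ with $W(0)=W(\infty)=0$ must vanish identically, whence $\mathcal{L}w\equiv0$ on the set where $\delta\ne0$, i.e. (in the model case) $c(K_g(u)-1)\equiv2$, forcing $K_g$ to be constant and $g$ to be a pure power. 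This contradicts (G2), which makes $g$ change sign at $b$. Therefore $\delta\to-\infty$.

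The genuinely delicate step is the construction of $w$, i.e. the choice of coefficient making $\mathcal{L}w$ change sign at exactly the right place. The difficulty is that $g(u)$ itself changes sign at $r_b$, while the factor $c(K_g(u)-1)-2$ is governed by $K_g$: by (G4) this factor is monotone on $(0,r_b)$ where $g(u)>0$, and (G5) controls its sign on $(r_b,R)$ where $g(u)<0$. Balancing these two sign changes with a single constant $c$ is precisely where the $I$-theorem approach breaks down in the sublinear range $K_g(\infty)\in(0,1)$, and I expect that one must instead use a region-dependent coefficient, match $w$ and $w'$ continuously at $r_b$, and control the sign of the resulting jump of $\mathcal{L}w$ there so that $(\mathcal{L}w)\delta\le0$ is preserved across $r_b$. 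This matching, which is the technical core deferred to Section 3, is the main obstacle.
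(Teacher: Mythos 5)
Your comparison skeleton is sound: the Wronskian $W=r^{N-1}(w'\delta-w\delta')$ with $W'=r^{N-1}(\mathcal{L}w)\delta$, the boundary values $W(0)=0$ and $W(R)=-R^{N-1}w(R)\delta'(R)>0$ under the contradiction hypothesis for $d\in N$, and $W(\infty)=0$ for $d\in G$ all reproduce the paper's Sturm--Picone mechanism (the paper writes it as a Picone identity involving $v_{\lambda_0}'/v_{\lambda_0}$, which is why it additionally needs $v_{\lambda_0}(r_\delta)=0$ and $v_{\lambda_0}\neq 0$ on $(r_\delta,R)$; in your Wronskian form no division occurs, so strictly less input is required). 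But the proposal stops exactly where the proof begins, and the completion strategy you sketch is based on a misreading of where $\mathcal{L}w$ changes sign. For $w=ru'+cu$ you correctly get $\mathcal{L}w=I(u,c)=c\,g(u)\bigl[K_g(u)-1-\tfrac{2}{c}\bigr]$, and hypothesis (G5) makes this quantity \emph{automatically positive} on the region $0<u<b$: there $g(u)<0$ by (G2) and $K_g(u)\le 1<1+\tfrac{2}{c}$, so both factors are negative. Hence $I(u(r),c)$ has no sign change at $r_b$ at all, and there are no ``two sign changes to balance'' with one constant: by (G4), $I(\cdot,c)$ changes sign exactly once, at the level $t=(K_g)^{-1}(1+\tfrac{2}{c})\in(b,s^*)$ --- this is precisely Lemma \ref{lem:2.5}(i). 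A region-dependent coefficient matched at $r_b$ is neither needed nor relevant, and pursuing it would not close the argument.

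The genuine difficulty, which your proposal neither states nor addresses, is to tune the single constant so that this unique sign change of $\mathcal{L}w$ lands exactly at $r_\delta$, i.e.\ to take $c=\Lambda\bigl(u(r_\delta)\bigr)=2/\bigl(K_g(u(r_\delta))-1\bigr)$. For this to be possible one must first prove $b<u(r_\delta)<s^*$, which is the paper's Lemma \ref{lem:3.3}; its proof is itself nontrivial (an integral identity pairing $u'$ with $\delta$ for the lower bound, and a Sturm comparison against $v_\lambda$ with $\lambda$ large, via Lemma \ref{lem:2.5}(iii), for the upper bound), and it is where the contradiction hypothesis $\delta(R)=0$, resp.\ $\delta\to 0$, is actually consumed. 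Granting that lemma and the resulting sign table \eqref{eq:3.5}, your Wronskian with $w=v_{\underline{\lambda}}$, $\underline{\lambda}=\Lambda\bigl(u(r_\delta)\bigr)$, would indeed finish both cases --- in fact somewhat more directly than the paper, which needs Lemmas \ref{lem:3.4}--\ref{lem:3.6} and the function $\theta=-ru'/u$ in Lemma \ref{lem:3.1} only because its Picone formulation divides by $v_{\lambda_0}$ and therefore requires $v_{\lambda_0}(r_\delta)=0$ with a fixed sign afterwards. (Your fallback in the $d\in G$ case is then also unnecessary: with the correct constant, $W'<0$ strictly on $(0,r_\delta)$, which already contradicts $W(0)=W(\infty)=0$ without any rigidity discussion of $K_g$.) As written, however, the proof has a hole at its core --- the localization $b<u(r_\delta)<s^*$ --- and the matching-at-$r_b$ plan you propose in its place would fail to produce it.
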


Now we are ready to prove Theorem \ref{thm:1.1}.

\begin{proof}[Proof of Theorem \ref{thm:1.1}]
First we prove the uniqueness.
To this end, we suppose by contradiction that 
problem \eqref{eq:1.3} has at least two positive radial solutions.
Especially one has $G \ne \emptyset$.

Now by Proposition \ref{prop:2.2} (ii), 
$d_0= \inf (N \cup G)\in G$ is admissible and hence 
$d_0$ is strictly admissible by Proposition 2.3.
Thus by Proposition \ref{prop:2.2} (iii), 
there exists $\varepsilon>0$ such that
\begin{equation}
\label{eq:2.4}
 (d_0, d_0+\varepsilon) \subset N.
\end{equation}
Moreover taking $\varepsilon$ smaller if necessary, 
we may assume that $d \in (d_0, d_0+\varepsilon)$ is admissible.
In fact since $d_0$ is strictly admissible, it follows that
$\delta(r_0,d_0)<0$, $\delta'(r_0,d_0)<0$ and $g'\big( u(r_0,d_0) \big)<0$
for sufficiently large $r_0>0$.
Thus for $d$ sufficiently close to $d_0$, 
$\delta(r,d)$ has exactly one zero on $(0, r_0]$, 
$\delta(r_0,d)<0$, $\delta'(r_0,d)<0$ and $g' \big( u(r,d) \big)<0$
for $r\in (r_0, R(d))$.
If $\delta(r)=\delta(r,d)$ has a zero $r_1 \in (r_0,R(d))$, 
then $\delta$ has a negative minimum at $r_2 \in (r_0, r_1)$ and hence
\begin{equation*}
0 \leq  \delta''(r_2) = - \frac{N-1}{r_2} \delta'(r_2) - g'\big( u(r_2) \big) \delta(r_2)
=- g'\big( u(r_2) \big) \delta(r_2) <0.
\end{equation*}
This is a contradiction. Thus $d\in (d_0, d_0+\varepsilon)$ is admissible, that is, $\delta(r,d)$ has exactly one zero on
$(0,R(d))$ for any $d \in (d_0, d_0+\varepsilon)$.

Next since we assumed that the set $G$ has at least two elements, 
we have $(d_0, \tilde{b}) \cap G \not= \emptyset$ and hence we can define
\begin{equation*}
 d_1 := \inf (G \cup P) \cap (d_0, \tilde{b}).
\end{equation*}
Then from \eqref{eq:2.4}, it follows that $d_1> d_0$. 
Moreover by the definition, we can see that $(d_0, d_1) \subset N$.
Finally since $N$ and $P$ are open by Proposition \ref{prop:2.1} (i), 
we also have $d_1 \in G$.
Especially by Proposition \ref{prop:2.2} (i), $\delta(r,d_1)$ has
at least one zero on $(0,R)$. 

Let $\mathcal{N}(d)$ be the number of zeros of 
$\delta(r,d)$ on $(0,R(d))$.
Next we claim that $\mathcal{N}(d) \le 1$ for $d\in (d_0,d_1) \subset N$. 
Indeed we know that $\mathcal{N}(d)=1$ for any $d \in (d_0, d_0+\varepsilon)$.
Moreover by the continuity of zeros of $\delta(r,d)$ with respect to
$d$, $\mathcal{N}(d)$ changes only one by one as $d$ increases.
Especially by Proposition \ref{prop:2.3}, it cannot happen that
$\mathcal{N}(d)$ increases from 1 to 2, otherwise 
such a $d\in (d_0,d_1)$ is admissible but not strictly admissible.
This implies that $\mathcal{N}(d) \le 1$ for any $d\in (d_0,d_1)$.

Now suppose that $\delta(r,d_1)$ has exactly one zero. Then by Proposition 
\ref{prop:2.3}, it follows that $d_1$ is strictly admissible and hence 
$(d_1-\varepsilon', d_1) \subset P$ for some $\varepsilon'>0$ by Proposition \ref{prop:2.2} (iii). 
But this contradicts to the definition of $d_1$.
Next we suppose that $\delta(r,d_1)$ has at least two zeros.
Since zeros of $\delta(r,d)$ depend continuously on $d$, we have
$\mathcal{N}(d) \ge 2$ for $d<d_1$ sufficiently close to $d_1$.
This is a contradiction to $\mathcal{N}(d) \le 1$ for $d\in (d_0,d_1)$.
This completes the proof of the uniqueness.

Finally, let $\mathcal{L}=\Delta +g'(u): H^2(\mathbb{R}^N) \to L^2(\mathbb{R}^N)$ 
be the linearized operator around $u$,
and suppose that $\varphi_0$ is a nontrivial radial eigenfunction 
corresponding to 0-eigenvalue of $\mathcal{L}$, that is,
$$
\varphi_0''+\frac{N-1}{r}\varphi_0'+g'(u)\varphi_0=0, \ \varphi_0'(0)=0.$$
Since $\varphi_0$ and $\delta$ satisfy the same initial condition at the origin, 
$\varphi_0$ is a constant multiple of $\delta$.
Then by Proposition \ref{prop:2.3}, we have $\varphi_0 \to -\infty$
as $r \to \infty$ and hence $\varphi_0 \not\in H^1_{rad}(\mathbb{R}^N)$.
This completes the proof.
\end{proof}

Once we could obtain the uniqueness of the positive radial solution, 
one can show more precise information on the spectrum of $\mathcal{L}$. 
Indeed we can prove the following result.

\begin{corollary}\label{cor:2.4}
Assume (G1)-(G6) and let $\mathcal{L}=\Delta +g'(u)$ be the
linearized operator around the unique positive radial solution $u$ of \eqref{eq:1.1}.
Then the following properties hold.
\begin{itemize}
\item[\rm(i)] $\sigma(\mathcal{L})=\sigma_p(\mathcal{L}) \cup \sigma_e(\mathcal{L})$,
where $\sigma_p(\mathcal{L})$ and $\sigma_e(\mathcal{L})$
are the point spectrum and the essential spectrum of $\mathcal{L}$ respectively.
\item[\rm(ii)] 
$\sigma_e(\mathcal{L})=(-\infty,g'(0)]$ and 
$\sigma_p(\mathcal{L}) \subset (g'(0), \infty)$.
\item[\rm(iii)] If $\mu \in \sigma_p(\mathcal{L})$, 
then the corresponding eigenfunction $\varphi(x)$ satisfies
$$
| D^k \varphi(x)| 
\le C_{\epsilon} e^{-\sqrt{{-g'(0)+\mu+\varepsilon \over 2}}|x|}, \ x\in \mathbb{R}^N, |k| \le 2
$$
for any small $\varepsilon>0$ and some $C_{\epsilon}>0$.
\item[\rm(iv)] If $\mu \in \sigma_p(\mathcal{L}) \cap (0, \infty)$, 
then the corresponding eigenfunction is radially symmetric.
\item[\rm(v)] The principal eigenvalue $\mu_1(\mathcal{L}) >0$ 
is simple, and the corresponding eigenfunction $\varphi$ can be chosen to be positive.
\item[\rm(vi)] The second eigenvalue $\mu_2(\mathcal{L})$ is zero,
and the eigenspace associated with the eigenvalue $\mu =0$ is spanned by
$$
\left\{{\partial u \over \partial x_i}\,;\, i=1, \cdots ,N\right\}.
$$
\end{itemize}
\end{corollary}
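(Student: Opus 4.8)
The plan is to analyze $\mathcal{L}=\Delta+g'(u)$ as a Schr\"odinger operator whose potential $g'(u)$ is bounded (since $u\in L^\infty$ and $g\in C^1$) and satisfies $g'(u(x))\to g'(0)<0$ as $|x|\to\infty$; note that the unique positive solution corresponds to $d_0\in G$, so by Proposition~\ref{prop:2.1}(iii) both $u$ and its derivatives decay exponentially. Because $g'(u)\in L^\infty$, the operator $\mathcal{L}$ is a bounded self-adjoint perturbation of $\Delta$ on $L^2(\mathbb{R}^N)$ with domain $H^2(\mathbb{R}^N)$; as a self-adjoint operator has empty residual spectrum, (i) is immediate. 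For (ii) I would write $\mathcal{L}=(\Delta+g'(0))+W$ with $W:=g'(u)-g'(0)$ vanishing at infinity, hence relatively $\Delta$-compact, so Weyl's theorem gives $\sigma_e(\mathcal{L})=\sigma_e(\Delta+g'(0))=(-\infty,g'(0)]$. The inclusion $\sigma_p(\mathcal{L})\subset(g'(0),\infty)$ amounts to excluding eigenvalues at or below the threshold $g'(0)$: for $\mu<g'(0)$ this is absence of embedded eigenvalues (Kato--Agmon--Simon type, since the potential tends to a constant), and the threshold case $\mu=g'(0)$ is ruled out by a comparison/unique-continuation argument showing an $L^2$ eigenfunction would be forced to vanish.

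For (iii), fix $\mu\in\sigma_p(\mathcal{L})$ with eigenfunction $\varphi$, so that $\Delta\varphi=(\mu-g'(u))\varphi$ with $\mu-g'(u)\to\mu-g'(0)>0$ at infinity. I would compare $|\varphi|$ with a supersolution of the form $e^{-\gamma|x|}$ for the limiting constant-coefficient equation and run an Agmon-type estimate to obtain the stated exponential decay of $\varphi$; bootstrapping through the elliptic equation together with interior Schauder estimates then upgrades the decay to $D^k\varphi$ for $|k|\le2$.

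The heart of the matter is the spherical-harmonic decomposition. Writing $\varphi=\sum_{k\ge0}\varphi_k(r)Y_k(\theta)$, where $Y_k$ are spherical harmonics with Laplace--Beltrami eigenvalues $\lambda_k=k(k+N-2)$, the sector operators are $\mathcal{L}_k=\partial_r^2+\tfrac{N-1}{r}\partial_r-\tfrac{\lambda_k}{r^2}+g'(u)$, whose top eigenvalues strictly decrease in $k$ because the centrifugal term $-\lambda_k/r^2$ lowers the quadratic form. Differentiating $-\Delta u=g(u)$ yields $\mathcal{L}(\partial_i u)=0$, and $\partial_i u=u'(r)\,x_i/|x|$ lies in the sector $k=1$ with radial profile $u'(r)$; since $u$ is monotone decreasing (Proposition~\ref{prop:2.1}(ii)), $u'$ has constant sign on $(0,\infty)$, so it is the principal eigenfunction of $\mathcal{L}_1$ and $0$ is the top eigenvalue of $\mathcal{L}_1$. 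Hence every sector with $k\ge1$ has all eigenvalues $\le0$, giving (iv): any $\mu>0$ must live in the radial sector $k=0$. For (v), since $0\in\sigma_p(\mathcal{L})$ lies above the essential spectrum, the top of the spectrum $\mu_1=\sup\sigma(\mathcal{L})\ge0>g'(0)$ is an isolated eigenvalue of the discrete spectrum, and the Perron--Frobenius property of Schr\"odinger operators makes $\mu_1$ simple with a sign-definite eigenfunction. If $\mu_1=0$ then, by simplicity, the sign-changing functions $\partial_i u$ would be scalar multiples of this sign-definite eigenfunction, which is impossible; thus $\mu_1>0$.

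Finally, for (vi) I would pin down both the kernel and the value of $\mu_2$. The sector analysis already shows that $k=0$ contributes no kernel, by the non-degeneracy conclusion of Theorem~\ref{thm:1.1}; the sectors $k\ge2$ have strictly negative top eigenvalue, hence no kernel; and in $k=1$ the decaying solution of the radial ODE is unique up to scale, as the second linearly independent solution grows and is not in $L^2$. Therefore $\ker\mathcal{L}=\mathrm{span}\{\partial_i u\}$ is exactly $N$-dimensional. To identify $\mu_2=0$, I would use Sturmian oscillation theory in the radial sector: the shooting solution $\delta(\cdot,d_0)$ of $\mathcal{L}_0\varphi=0$ has exactly one zero by Proposition~\ref{prop:2.2}(ii), so there is precisely one radial eigenvalue above $0$, namely $\mu_1$; combined with (iv), $\mu_1$ is the only eigenvalue in $(0,\infty)$, whence the second eigenvalue equals $0$. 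I expect the main obstacles to be the exclusion of threshold and embedded eigenvalues in (ii) and, above all, the rigorous passage from zero counts of the shooting solutions to the number of eigenvalues above a given level, which is precisely what forces $\mu_2=0$ and links the spectral picture to the ODE analysis of Propositions~\ref{prop:2.2} and~\ref{prop:2.3}.
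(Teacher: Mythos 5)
Your proposal is correct in substance, and for (i)--(iii) and (v) it coincides with the paper: self-adjointness plus Weyl's theorem for the compact perturbation of $\Delta+g'(0)$, Agmon-type decay, and ruling out $\mu_1=0$ via simplicity of the principal eigenvalue. Where you genuinely diverge is in (iv) and (vi). The paper argues through explicit integral identities: multiplying the equations for the harmonic components $\varphi_i$ and for $u'$ (which solves the $\lambda_i=N-1$ sector equation) it derives the Wronskian identity \eqref{eq:2.4.4} and extracts sign contradictions from $u'<0$, $N-1-\lambda_i<0$, $\mu\ge 0$; and it proves $\mu_2=0$ by two Picone identities --- first against the principal eigenfunction to force a zero $r_0$ of a putative radial eigenfunction with $\mu_2>0$, then against $\delta$ on $(r_0,\infty)$ after using Lemma \ref{lem:A.1} (b) to place $r_\delta<r_0$, reaching $0<\mu_2\int_{r_0}^\infty\varphi^2\,ds\le 0$. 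You instead compare the sector operators $\mathcal{L}_k$ variationally (the centrifugal term strictly lowers the top of the form, $u'$ is the sign-definite hence principal eigenfunction of $\mathcal{L}_1$ at eigenvalue $0$), which simultaneously yields (iv), the vanishing of the $k\ge2$ kernel, and the one-dimensionality of the $k=1$ kernel; and you pin $\mu_2=0$ by the classical oscillation count (radial eigenvalues above $0$ equal the number of zeros of $\delta(\cdot,d_0)$, namely one, by Proposition \ref{prop:2.2} (ii)). Your scheme is valid and avoids the boundary-term bookkeeping of \eqref{eq:2.4.4}, but it buys this by invoking the oscillation theorem on a singular weighted half-line problem, whose careful justification (regular endpoint $r=0$ with weight $r^{N-1}$, limit point at infinity, and $\delta\notin L^2$ --- which you must get from Proposition \ref{prop:2.3}, i.e. $\delta\to-\infty$, otherwise $0$ could be a radial eigenvalue and the count would shift) is precisely the machinery the paper's self-contained Picone computations bypass. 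One caveat in (ii): your unique-continuation remark does not by itself exclude a threshold eigenvalue $\mu=g'(0)$, since zero-energy $L^2$ bound states of $-\Delta+W$ with rapidly decaying $W$ do exist in general (e.g. for $N\ge 5$), so that point needs an actual argument; the paper itself defers (i)--(iii) to the standard theory following \cite{BS}.
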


\begin{proof}
Although the proof can be done in a similar way as \cite{BS},
we give the proof for the sake of completeness.

First, since $u$ is bounded and decays exponentially at infinity,
$\mathcal{L}$ is a self-adjoint operator from the domain ${\rm Dom}(\mathcal{L})=H^1(\mathbb{R}^N)$
to $L^2(\mathbb{R}^N)$, and a compact perturbation of $\Delta +g'(0)$.
Then (i), (ii) and (iii) follow by the standard spectral theory for self-adjoint operators. 

By the elliptic regularity theory and the exponential decays of $u^{(k)}(r)$,
it follows that $\frac{\partial u}{\partial x_i} \in H^2(\mathbb{R}^N)$ and
$\frac{\partial u}{\partial x_i}, i=1,\cdots, N$, are eigenfunctions of $\mathcal{L}$ 
corresponding to the eigenvalue $0$. 
Especially $\sigma_p(\mathcal{L}) \ne \emptyset$.
It is well-known that $\mu_1(\mathcal{L})$ is simple 
and the corresponding eigenfunction $\varphi_1$ has a constant sign.
Since $0$ is not a simple eigenvalue, 
it follows that $\mu_1(\mathcal{L})>0$ and hence (v) holds.

Next we show (iv). Let $\lambda_i$ and $\psi_i(\theta)$ with $\theta \in S^{N-1}$
be the eigenvalues and eigenfunctions of the Laplace-Beltrami operator on $S^{N-1}$.
Then it is well-known that
$$
0=\lambda_0<\lambda_1 = \cdots =\lambda_N=N-1 < \lambda_{N+1} \cdots,$$
$\psi_0(\theta)=1$ and $\{ \psi_i \}$ is a complete orthogonal basis of $L^2(S^{N-1})$.
Let $\varphi$ be a eigenfunction of $\mathcal{L}$ with eigenvalue $\mu \ge 0$. 
We define
$$
\varphi_i(r) := 
\int_{S^{N-1}} \varphi(r,\theta) \psi_i(\theta) \,d\theta.$$
Then from (iii), we have $\varphi_i'(0)=0$ and
\begin{equation} \label{eq:2.4.1}
| \varphi_i^{(k)} (r) | \le C e^{-\rho r}, \ 
r\in (0,\infty), \ k=0,1,2
\end{equation}
for some $C, \rho>0$. 
Moreover by a direct calculation, one has
\begin{equation} \label{eq:2.4.2}
\varphi_i'' +\frac{N-1}{r} \varphi_i' + \left( g'(u)-\frac{\lambda_i}{r^2} \right) \varphi_i
=\mu \varphi_i.
\end{equation}
On the other hand, $u'$ satisfies $u'<0$ and
\begin{equation} \label{eq:2.4.3}
u'''+\frac{N-1}{r}u''+ \left( g'(u)-\frac{N-1}{r^2} \right)u'=0.
\end{equation}
Multiplying \eqref{eq:2.4.2}, \eqref{eq:2.4.3} by $r^{N-1}u'$, $r^{N-1}\varphi_i$ respectively,
then subtracting and integrating over $(0,r)$, we get
\begin{align} \label{eq:2.4.4}
&r^{N-1} \big( u'(r)\varphi_i'(r)-u''(r)\varphi_i(r) \big)
+(N-1-\lambda_i)\int_0^r s^{N-3} u' \varphi_i \,ds \nonumber \\
&=\mu \int_0^r s^{N-1} u' \varphi_i \,dr.
\end{align}

We claim that $\varphi_i \equiv 0$ for $i \ge N+1$.
If not, we may assume that $\varphi_i(0)>0$. 
Let $r_i \in (0,\infty]$ be such that $\varphi_i(r)>0$ on $[0,r_i)$,
$\varphi_i(r_i)=0$ and $\varphi_i'(r_i) < 0$.
When $r_i < \infty$, it follows from \eqref{eq:2.4.4} that
$$
0< r_i^{N-1} u'(r_i) \varphi_i'(r_i)
+(N-1-\lambda_i) \int_0^{r_i} s^{N-3} u' \varphi_i \,ds
=\mu \int_0^{r_i} s^{N-1} u' \varphi_i \,ds \le 0,$$
which is a contradiction. On the other hand when $r_i=\infty$,
by the exponential decays of $u', u'', \varphi_i$ and $\varphi_i'$, we have
$$
\lim_{r \to \infty} 
\left\{ r^{N-1} \big( u'(r) \varphi_i'(r) -u''(r) \varphi_i(r) \big) \right\}=0.$$
Thus we get
$$
0<(N-1-\lambda_i) \int_0^{\infty} s^{N-3} u' \varphi_i \,ds
=\mu \int_0^{\infty} s^{N-1} u' \varphi_i \,ds \le 0.$$
This implies that $\varphi_i \equiv 0$ for $i \ge N+1$ as claimed.
Since $\{ \psi_i\}$ is a complete orthogonal basis of $L^2(S^{N-1})$, we have
\begin{equation} \label{eq:2.4.5}
\varphi(x)= \varphi(r,\theta)= \sum_{i=0}^N \varphi_i(r) \psi_i( \theta).
\end{equation}

Next we suppose that $\mu>0$. If $\varphi_i \not \equiv 0$, for some $i=1, \dots, N$, in a similar argument as above,
it follows from $\lambda_i=N-1$ that
$$
\varphi_i(r) >0 \quad \hbox{for} \ r\in (0,\infty)
.$$
Then taking $r \to \infty$ in \eqref{eq:2.4.4}, we get
$$
0= \mu \int_0^{\infty} s^{N-1} u' \varphi_i \,ds <0.$$
This is a contradiction and hence $\varphi_i \equiv 0$ for $i=1, \cdots, N$ if $\mu>0$.
Thus from \eqref{eq:2.4.5}, we obtain
$$
\varphi(x)=\varphi_0(r) \psi_0(\theta) = \varphi_0(r)$$
and hence $\varphi$ is radially symmetric. 
This completes the proof of (iv).

Finally we prove (vi). 
Let $\varphi$ be an eigenfunction of $\mathcal{L}$ with eigenvalue $\mu=0$.
From \eqref{eq:2.4.2} and $\lambda_0=0$, it follows that
$$
\varphi_0''+\frac{N-1}{r}\varphi_0'+g'(u)\varphi_0=0, \ 
\varphi_0'(0)=0.$$
Moreover from \eqref{eq:2.4.1}, we also have $\varphi_0 \in H^1_{rad}(\mathbb{R}^N)$.
Then by Theorem \ref{thm:1.1}, we get $\varphi_0 \equiv 0$.
Thus from \eqref{eq:2.4.5}, we obtain
$$
\varphi(x)= \sum_{i=1}^N \varphi_i(r) \psi_i(\theta)$$
and hence ${\rm dim} \mathop{\rm Ker}\,(\mathcal{L}) \le N$. 
On the other hand since $\frac{\partial u}{\partial x_i} \in \mathop{\rm Ker}\,(\mathcal{L})$
for $i=1,\cdots, N$, it follows that
$$
\mathop{\rm Ker}\,(\mathcal{L})={\rm span} \left\{ \frac{\partial u}{\partial x_i} \ ; \ 
i=1,\cdots, N \right\}.$$

To finish the proof, it suffices to show that $\mu_2(\mathcal{L})=0$. 
If $\mu_2(\mathcal{L})>0$, 
the corresponding eigenfunction $\varphi$ is radially symmetric from (iv).
This implies that $\varphi$ satisfies 
\begin{equation} \label{eq:2.4.6}
\varphi''+\frac{N-1}{r}\varphi' +\big( g'(u)-\mu_2 \big) \varphi =0, \ 
r\in (0,\infty), \ \varphi'(0)=0.
\end{equation}
Moreover from (iii), one can show that $\varphi$, $\varphi'$ decay exponentially and
$$
\frac{\varphi'}{\varphi} \to - \sqrt{ -g'(0)+\mu_2} \ \hbox{as} \ r \to \infty.$$
First we claim that $\varphi$ has a zero on $(0,\infty)$. 
To this end, let $\tilde{\varphi}$ be an eigenfunction of $\mathcal{L}$
corresponding to eigenvalue $\mu_1>0$. 
Then $\tilde{\varphi}$ decays exponentially and satisfies
$$
\tilde{\varphi}''+\frac{N-1}{r}\tilde{\varphi}'
+\big( g'(u)-\mu_1 \big) \tilde{\varphi}=0, \ r\in (0,\infty), \ \tilde{\varphi}'(0)=0.$$
Now we suppose that $\varphi \ne 0$ on $(0,\infty)$. 
Then we have the following identity:
$$
\left[ r^{N-1}\tilde{\varphi}\tilde{\varphi}'-r^{N-1}\tilde{\varphi}^2
\frac{\varphi'}{\varphi} \right]'
=(\mu_1-\mu_2) \tilde{\varphi}^2
+r^{N-1} \left( \tilde{\varphi}'-\tilde{\varphi} \frac{\varphi'}{\varphi} \right)^2
\ \hbox{on} \ (0,\infty).$$
In a similar way as the proof of Proposition \ref{prop:2.2} (i),
integrating over $[0,r]$ and passing a limit $r \to \infty$, we get
$$
(\mu_1-\mu_2) \int_0^{\infty} \tilde{\varphi}^2 \,ds \le 0.$$
However since $\mu_1$ is the principal eigenvalue, it follows that $\mu_1>\mu_2$,
which leads a contradiction. 
Thus there exists $r_0 \in (0,\infty)$ such that $\varphi(r_0)=0$. 

Next we compare $\varphi$ and $\delta$. 
First from $\mu_2>0$ and by Lemma \ref{lem:A.1} (b), it follows that $r_{\delta}<r_0$,
where $r_{\delta}$ is the unique zero of $\delta$.
Especially $\delta \ne 0$ on $(r_0,\infty)$. 
Then from \eqref{eq:2.3} and \eqref{eq:2.4.6}, one has the following identity:
$$
\left[ r^{N-1}\varphi\varphi'-r^{N-1}\varphi^2 \frac{\delta'}{\delta} \right]'
=\mu_2 \varphi^2 +r^{N-1} \left( \varphi'-\varphi \frac{\delta'}{\delta} \right)^2
\ \hbox{on} \ (r_0,\infty).$$
Arguing similarly as in the proof of Proposition \ref{prop:2.2} (i), we get 
$$
\limsup_{r \to \infty} r^{N-1} \left( \varphi\varphi'-r^{N-1}\varphi^2 
\frac{\delta'}{\delta} \right) \le 0.$$
Thus from $\varphi(r_0)=0$ and $\delta(r_0) \ne 0$, we obtain
$$
0< \mu_2 \int_{r_0}^{\infty} \varphi^2 \,ds \le 0.$$
This is a contradiction and hence $\mu_2=0$.
This completes the proof of Corollary \ref{cor:2.4}.
\end{proof}

Now we give some preliminaries to prove Proposition \ref{prop:2.3}.
For $s>0$ and $\lambda>0$, we define the {\it I-function} by
$$
 I(s, \lambda):=\lambda s g'(s) -(\lambda+2) g(s). 
$$
We note by the definition of $K_g(s)$ that 
\begin{equation}
\label{eq:2.5}
 I(s, \lambda)=
\lambda g(s) 
\left\{
K_g(s) - 1 - \frac{2}{\lambda}
\right\} \ \hbox{for} \ s \ne b, \tilde{b}.
\end{equation}
Moreover we put $\displaystyle K_\infty := \lim_{s \to \tilde{b}-} K_g(s)$.
Then from (G2) and (G4), $K_{\infty}$ is well-defined, 
$K_{\infty}\in [-\infty,\infty)$ and $K_{\infty}=-\infty$ if $\tilde{b}<\infty$.
We also observe that $\displaystyle \lim_{s \to b+} K_g(s)=\infty$ by (G2). 

\begin{lemma}\label{lem:2.5} \ Assume (G1)-(G5).
\begin{itemize}
\item[\rm(i)] 
Let $s^* :=\infty$ if $K_{\infty} \ge 1$ and $s^* :=
(K_g)^{-1}(1)\in (b,\tilde{b})$ if $K_{\infty}<1$.
Then for each $t \in (b, s^*)$, 
there exists a unique $\Lambda(t)>0$ such that
\begin{align*}
& I \big( s, \Lambda(t) \big) > 0 \text{ for } 0 < s < t, \\
& I \big( s, \Lambda(t) \big) < 0 \text{ for } t<s<\tilde{b}.
\end{align*}
Moreover the map $t \mapsto \Lambda(t)$ is continuous.
\item[\rm(ii)] 
Let $\tilde{\lambda}>0$, $c\in (b, \tilde{b})$ be given and
suppose that $I(c, \tilde{\lambda}) \geq 0$. Then 
\begin{equation*}
 I(s, \lambda) > 0 \text{ for all } s\in (b,c]
\ \hbox{and} \ \lambda>\tilde{\lambda}.
\end{equation*}
\item[\rm(iii)] 
If $K_ \infty < 1$, then 
$I(s, \lambda) < 0$ for all $s \in [s^*, \tilde{b})$ and $\lambda>0$. 
\end{itemize}
\end{lemma}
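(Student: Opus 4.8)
The plan is to read off all three assertions from the factorization \eqref{eq:2.5}, which reduces the sign of $I(\cdot,\lambda)$ to that of a single function. Writing $\mu=\mu(\lambda):=1+\frac{2}{\lambda}$, equation \eqref{eq:2.5} becomes $I(s,\lambda)=\lambda g(s)\bigl(K_g(s)-\mu\bigr)$ for $s\ne b,\tilde b$, and as $\lambda$ runs over $(0,\infty)$ the quantity $\mu$ decreases bijectively onto $(1,\infty)$. Thus everything is governed by the sign of $g$ and by the position of the level $\mu>1$ relative to the graph of $K_g$.

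Before the case analysis I would record the qualitative behaviour of $K_g$. Since $g\in C^1$ and $g\neq0$ away from $b,\tilde b$, the function $K_g$ is continuous on $(0,b)\cup(b,\tilde b)$; on $(0,b)$ one has $g<0$ and, by (G5), $K_g\le1$. On $(b,\tilde b)$ one has $g>0$, and as already noted $K_g(b^+)=+\infty$ while $K_g$ is strictly decreasing with $\lim_{s\to\tilde b-}K_g=K_\infty$ by (G4). Hence $s^*$ is exactly the threshold at which $K_g$ crosses the level $1$: if $K_\infty\ge1$ then $K_g>1$ on all of $(b,\tilde b)=(b,s^*)$, and if $K_\infty<1$ then $K_g(s^*)=1$ with $K_g>1$ on $(b,s^*)$ and $K_g\le1$ on $[s^*,\tilde b)$.

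For (i), fix $t\in(b,s^*)$, so $K_g(t)>1$, and define $\Lambda(t):=\frac{2}{K_g(t)-1}>0$, i.e.\ the unique $\lambda$ with $\mu(\lambda)=K_g(t)$. I would then check the sign of $I(\cdot,\Lambda(t))$ region by region: on $(0,b)$ both $g<0$ and $K_g-\mu\le1-\mu<0$, so $I>0$ for \emph{every} $\lambda>0$; at $s=b$ one computes directly $I(b,\lambda)=\lambda b\,g'(b)>0$; on $(b,t)$ strict monotonicity gives $K_g(s)>K_g(t)=\mu$, whence $I>0$; and on $(t,\tilde b)$ one has $K_g(s)<\mu$, so $I<0$. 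Uniqueness follows because on $(b,\tilde b)$ the factor $g$ is positive, so any admissible $\lambda$ must produce a sign change of $K_g(\cdot)-\mu(\lambda)$ exactly at $t$, forcing $\mu(\lambda)=K_g(t)$ by continuity; and continuity of $t\mapsto\Lambda(t)$ is immediate from continuity of $K_g$ on $(b,s^*)$ together with $K_g>1$ there.

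Parts (ii) and (iii) are then variations of the same bookkeeping. For (ii), $I(c,\tilde\lambda)\ge0$ together with $g(c)>0$ reads $K_g(c)\ge\mu(\tilde\lambda)$; for $\lambda>\tilde\lambda$ one has $\mu(\lambda)<\mu(\tilde\lambda)\le K_g(c)\le K_g(s)$ for all $s\in(b,c]$ by monotonicity, giving $I(s,\lambda)>0$ there. For (iii), when $K_\infty<1$ every $s\in[s^*,\tilde b)$ satisfies $K_g(s)\le K_g(s^*)=1<\mu(\lambda)$, so $I(s,\lambda)<0$ for all $\lambda>0$. The only genuinely delicate point is the interface at $s=b$: there $g$ changes sign and $K_g$ blows up, so \eqref{eq:2.5} is not directly available, and one must argue positivity on $(0,b)$ (uniformly in $\lambda$, which is precisely where (G5) enters) separately from the direct evaluation at $s=b$, and then splice the two. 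Handling this boundary carefully, rather than any single estimate, is the main obstacle.
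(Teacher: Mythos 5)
Your proposal is correct and follows essentially the same route as the paper: both reduce everything to the factorization \eqref{eq:2.5}, define $\Lambda(t)=\frac{2}{K_g(t)-1}$ as the unique solution of $K_g(t)=1+\frac{2}{\lambda}$, and deduce all sign conditions from the monotonicity of $K_g$ in (G4) together with (G5) on $(0,b)$. Your treatment is in fact somewhat more explicit than the paper's terse proof of (i) — in particular the separate verification on $(0,b)$, the direct evaluation $I(b,\lambda)=\lambda b\,g'(b)>0$, and the uniqueness argument — but these are exactly the details the paper leaves implicit, not a different method.
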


\begin{proof}
(i):
Since $K_g(s)$ is decreasing on $(b, \tilde{b})$ by (G4), 
it follows that $s^*$ is well-defined and $K_g(t) >1$ for $t \in (b,s^*)$.
Thus there exists a unique $\lambda>0$ such that
$K_g(t)=1+\frac{2}{\lambda}$.
Putting $\Lambda(t)=\frac{2}{K_g(t)-1}$, (i) holds.

(ii): By the assumption $I(c,\tilde{\lambda}) \geq 0$ and (G2), it follows that 
$$
K_g(c) - \frac{\tilde{\lambda}+2}{\tilde{\lambda}} \geq 0.
$$
From $\frac{\tilde{\lambda}+2}{\tilde{\lambda}} > \frac{\lambda+2}{\lambda}$, 
one has $K_g(c) - \frac{\lambda+2}{\lambda} > 0$. 
Since $K_g(s)$ is decreasing, the claim holds.

(iii):
Let $s \in [s^*,\tilde{b})$. Then one has from (G5) that $K_g(s) \leq  K_g(s^*) =1$
and $\frac{\lambda+2}{\lambda} > 1$. 
Thus from \eqref{eq:2.5}, the claim follows.
\end{proof}

\begin{remark}\label{rem:2.6}
If $K_{\infty}\ge 1$, Lemma \ref{lem:2.5} (i) shows that 
I-theorem in \cite{Mc} holds \rm{(}See \cite[Theorem 1]{Mc}\rm{)}.
Then Proposition \ref{prop:2.3} follows from Lemma 8 in \cite{Mc}.
\end{remark}

\section{Proof of the strict admissibility}

In this section, we prove Proposition \ref{prop:2.3}.
To this end, let $d \in N \cup G$ and assume that
$\delta(r,d)$ has exactly one zero in $(0,R)$.
Furthermore we suppose by contradiction that 
$\delta(R)=0$ when $d \in N$ and 
$\delta(r)$ stays bounded at infinity when $d \in G$. 
Then by Proposition \ref{prop:2.2} and from \eqref{eq:2.2}, 
we can show that $\delta'(R)>0$ if $d \in N$ and 
$\delta(r) \to 0$ exponentially as $r \to \infty$ if $d \in G$. 
(See \cite[Lemma 2, p. 497]{Mc}.)

To derive a contradiction, we define 
$$
v_\lambda(r) := v_{\lambda}(r,d)=r u'(r) + \lambda u(r) \ \hbox{for} \ \lambda>0.$$
We note that $v_{\lambda}(0)=\lambda u(0)>0$. 
Moreover one can see that $v_\lambda$ satisfies 
\begin{equation}
\label{eq:3.1}
v_\lambda'' + \frac{N-1}{r} v_\lambda' + g'(u) v_\lambda=I(u, \lambda) \text{ on } (0,R)
\end{equation}
and hence 
\begin{equation}
\label{eq:3.2}
v_\lambda'' + \frac{N-1}{r} v_\lambda' + \left[g'(u) - \frac{I(u, \lambda)}{v_\lambda}\right] v_\lambda=0 \ \hbox{for} \ v_\lambda \not=0.
\end{equation}
Our aim is to compare $\delta$ with $v_{\lambda}$, and our goal is to show the following lemma.

\begin{lemma}[Key lemma] \label{lem:3.1}
Let $r_{\delta} \in (0,R)$ be the unique zero of $\delta(r,d)$.
Then there exists $\lambda_0 >0$ such that
$$
v_{\lambda_0}(r_\delta)=0, \ 
v_{\lambda_0} <0 \text{ on } (r_\delta, R)
\ \hbox{and} \ 
v_{\lambda_0}'(r_{\delta})<0.
$$
\end{lemma}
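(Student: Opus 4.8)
The plan is to push the sign information about the $I$-function (Lemma~\ref{lem:2.5}) through the Wronskian-type identity that couples \eqref{eq:2.3} and \eqref{eq:3.1}, turning it into a monotonicity statement for the quotient $v_\lambda/\delta$. First I would record the identity: multiplying \eqref{eq:3.1} by $r^{N-1}\delta$ and \eqref{eq:2.3} by $r^{N-1}v_\lambda$ and subtracting gives, with $W_\lambda(r):=r^{N-1}\bigl(v_\lambda'\delta-v_\lambda\delta'\bigr)$,
\[
W_\lambda'(r)=r^{N-1}I(u,\lambda)\,\delta \quad\text{on }(0,R),
\]
and $W_\lambda(0)=0$ since $v_\lambda'(0)=\delta'(0)=0$. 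The relation $v_\lambda(r_\delta)=r_\delta u'(r_\delta)+\lambda u(r_\delta)=0$ is affine in $\lambda$, and $u'(r_\delta)<0<u(r_\delta)$, so I simply put
\[
\lambda_0:=-\frac{r_\delta u'(r_\delta)}{u(r_\delta)}>0 ,
\]
which forces $v_{\lambda_0}(r_\delta)=0$; this is the only choice made.

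Next I would fix the sign of $I(u,\lambda_0)$ along the trajectory. By \eqref{eq:2.5} together with (G2), (G4), (G5), the map $s\mapsto I(s,\lambda_0)$ is positive on $(0,b)$ (there $g<0$ and $K_g\le1$), while on $(b,\tilde b)$ it equals $\lambda_0 g(s)\{K_g(s)-1-\tfrac{2}{\lambda_0}\}$ with $g>0$ and $K_g$ decreasing; hence $s\mapsto I(s,\lambda_0)$ changes sign at most once, from positive to negative. Since $u$ is strictly decreasing by Proposition~\ref{prop:2.1}(ii), the map $r\mapsto I(u(r),\lambda_0)$ likewise changes sign at most once, and if it does the change is from negative to positive. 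Now $W_{\lambda_0}(r_\delta)=-r_\delta^{N-1}v_{\lambda_0}(r_\delta)\delta'(r_\delta)$ (using $\delta(r_\delta)=0$), which vanishes by the choice of $\lambda_0$, so integrating the identity yields $\int_0^{r_\delta}s^{N-1}I(u,\lambda_0)\delta\,ds=0$. As $\delta>0$ on $(0,r_\delta)$, this rules out $I(u,\lambda_0)$ being of one sign on $(0,r_\delta)$ and forces the (unique) sign change to occur strictly inside $(0,r_\delta)$; consequently $I(u(r),\lambda_0)>0$ for all $r\in(r_\delta,R)$.

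With this the comparison is short. On $(r_\delta,R)$ we have $\delta<0$, and since $W_{\lambda_0}(r_\delta)=0$ the identity gives $W_{\lambda_0}'=r^{N-1}I(u,\lambda_0)\delta<0$, whence $W_{\lambda_0}<0$ throughout $(r_\delta,R)$. Because $(v_{\lambda_0}/\delta)'=W_{\lambda_0}/(r^{N-1}\delta^2)$, the quotient $q:=v_{\lambda_0}/\delta$ is strictly decreasing on $(r_\delta,R)$. At the right end $v_{\lambda_0}$ is negative: for $d\in N$ one has $v_{\lambda_0}(R)=Ru'(R)<0$, and for $d\in G$ the decay \eqref{eq:2.2} shows $v_{\lambda_0}(r)=ru'(r)+\lambda_0u(r)<0$ for large $r$. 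Thus $q>0$ near the right end, and since $q$ is decreasing, $q>0$ on all of $(r_\delta,R)$, i.e. $v_{\lambda_0}<0$ there. Finally, since $q$ is positive and decreasing on $(r_\delta,R)$ its right-hand limit at $r_\delta$ is positive; as this limit equals $v_{\lambda_0}'(r_\delta)/\delta'(r_\delta)$ and $\delta'(r_\delta)<0$, we obtain $v_{\lambda_0}'(r_\delta)<0$.

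I expect the middle step to be the crux: proving that the single sign change of $r\mapsto I(u(r),\lambda_0)$ lies strictly to the left of $r_\delta$, so that $I(u,\lambda_0)>0$ on the whole of $(r_\delta,R)$. This is precisely where the coupling of $\lambda_0$ to $r_\delta$ through the vanishing integral $\int_0^{r_\delta}s^{N-1}I(u,\lambda_0)\delta\,ds=0$ replaces the I-theorem of \cite{Mc}; moreover it survives the sublinear regime $K_\infty<1$, where Lemma~\ref{lem:2.5}(iii) still yields the clean single sign change although no globally valid $\Lambda$ exists. A secondary technical point is the endpoint analysis when $d\in G$, where one must verify that the factor $r$ in $ru'(r)$ forces $v_{\lambda_0}$ negative for large $r$ even though $u,u'\to0$.
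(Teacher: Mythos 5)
Your proof is correct, and it takes a genuinely different---and substantially shorter---route than the paper's. You select the same $\lambda_0$ (the paper's $\theta(r_\delta)=-r_\delta u'(r_\delta)/u(r_\delta)$), but where the paper reaches it only at the end of a long chain---Lemma \ref{lem:3.3} (locating $u(r_\delta)\in(b,s^*)$ via an integration identity plus Sturm comparison), the function $\Lambda$ of Lemma \ref{lem:2.5}(i) giving $\underline{\lambda}$ and the sign table \eqref{eq:3.5}, the zero-counting Lemmas \ref{lem:3.4}--\ref{lem:3.6} for $v_\lambda$, and finally the monotonicity of $\theta$---you extract everything from the single Wronskian identity $W_{\lambda_0}'=r^{N-1}I(u,\lambda_0)\delta$ together with $W_{\lambda_0}(0)=W_{\lambda_0}(r_\delta)=0$. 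The resulting vanishing integral $\int_0^{r_\delta}s^{N-1}I(u,\lambda_0)\delta\,ds=0$, combined with $\delta>0$ on $(0,r_\delta)$ and the once-only sign change of $s\mapsto I(s,\lambda_0)$ (from (G2), (G4), (G5) via \eqref{eq:2.5}), pins the sign change of $r\mapsto I(u(r),\lambda_0)$ strictly inside $(0,r_\delta)$; this simultaneously replaces Lemma \ref{lem:3.3} and the choice of $\underline{\lambda}$, and it hands you $I(u,\lambda_0)>0$ on $(r_\delta,R)$, which is exactly what the proof of Proposition \ref{prop:2.3} consumes from \eqref{eq:3.5}, so the downstream argument is unaffected. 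The monotone quotient $q=v_{\lambda_0}/\delta$ then substitutes both for the Sturm comparisons of Lemmas \ref{lem:3.4}--\ref{lem:3.6} and for the paper's second-derivative argument that $v_{\lambda_0}'(r_\delta)<0$; your endpoint analysis for $d\in G$ via \eqref{eq:2.2} is the paper's Lemma \ref{lem:3.2}(i) and is fine. Two small points deserve a word: (a) the integral argument rules out one \emph{weak} sign only if $I(u(\cdot),\lambda_0)$ cannot vanish identically on $(0,r_\delta)$, which requires reading (G4) as strict decrease of $K_g$---but this is the same reading the paper itself needs for the uniqueness of $\Lambda(t)$ in Lemma \ref{lem:2.5}(i), since $u$ is strictly decreasing; (b) you use $u'(r_\delta)<0$ and $\delta'(r_\delta)<0$ strictly (for $\lambda_0>0$ and the l'H\^opital step), both standard ($\delta'(r_\delta)=0$ would force $\delta\equiv 0$ by ODE uniqueness) and equally implicit in the paper. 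What the paper's longer route buys is auxiliary structure---the global sign pattern of $v_\lambda$ for all $\lambda\ge\underline{\lambda}$ and the monotonicity of $\theta$ in the spirit of Kwong and McLeod---whereas your argument isolates the minimal mechanism behind strict admissibility and shows the key lemma needs no Sturm comparison at all.
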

As we will see later, we can prove Proposition \ref{prop:2.3} by 
Lemma \ref{lem:3.1}. (See also Remark \ref{rem:3.7} below.)
The proof of Lemma \ref{lem:3.1} consists of several lemmas.

\begin{lemma}\label{lem:3.2}
Let $\lambda>0$. Then we obtain the following properties.
\begin{itemize}
\item[\rm(i)] 
For sufficiently large $r>0$, it follows that $v_\lambda(r)<0$.
\rm{(}$v_{\lambda}(R)<0$ when $R < \infty$.\rm{)} 
Especially $v_\lambda$ has a zero on $(0,R)$.
\item[\rm(ii)] 
For sufficiently large $\lambda>0$, $v_\lambda$ does not have any zeros on $[0,r_\delta]$. 
Especially it follows that $v_\lambda>0$ on $[0, r_\delta]$.
\end{itemize}
\end{lemma}

\begin{proof}
(i): First we consider the case $d \in G$. 
By the definition of $v_{\lambda}$ and from \eqref{eq:2.2}, one has
$$
\lim_{r \to \infty} \frac{v_{\lambda}(r)}{u(r)} 
= \lim_{r \to \infty} \frac{r u'(r)}{u(r)} + \lambda 
= -\infty.
$$
Since $v_\lambda$ is continuous and $v_\lambda(0)>0$,
the claim holds. When $d\in N$, the claim also 
follows from $v_{\lambda}(R)=Ru'(R)<0$.

(ii): By the definition of $v_\lambda$, it follows that 
\begin{equation}
\label{eq:3.3}
 v_\lambda(r)= r u'(r) + \lambda u(r) \geq \inf_{r \in [0,r_\delta]} (r u'(r)) 
+ \lambda \inf_{r \in [0,r_\delta]} u(r)
\text{ for } r \in [0,r_\delta].
\end{equation}
Moreover we have
$$
 \inf_{r \in [0,r_\delta]} (r u'(r)) \in (-\infty,0) \ \hbox{and} \ 
\inf_{r \in [0,r_\delta]} u(r) >0.
$$
Thus the r.h.s of \eqref{eq:3.3} is positive for sufficiently large $\lambda$.
\end{proof}

\begin{lemma}
\label{lem:3.3}
Let $r_{\delta}$ be the unique zero of $\delta(r)$ and suppose that
$\delta(r) \to 0$ as $r \to R$. Then it follows that 
$b< u(r_{\delta}) < s^*$, where $s^*$ is a constant defined in Lemma \ref{lem:2.5} (i). 
\end{lemma}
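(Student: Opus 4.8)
The engine of the plan is a Picone-type identity relating $\delta$ (a solution of \eqref{eq:2.3}) and $v_\lambda$ (a solution of \eqref{eq:3.1}). Writing both equations in divergence form, $(r^{N-1}\delta')'=-r^{N-1}g'(u)\delta$ and $(r^{N-1}v_\lambda')'=-r^{N-1}g'(u)v_\lambda+r^{N-1}I(u,\lambda)$, the zeroth-order terms cancel and I obtain
\[
\frac{d}{dr}\Big[r^{N-1}\big(\delta'v_\lambda-\delta v_\lambda'\big)\Big]=-r^{N-1}\delta\,I(u,\lambda).
\]
The point is that only $I(u,\lambda)$ survives on the right, and its sign is dictated by \eqref{eq:2.5} together with Lemma \ref{lem:2.5}. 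I will use that the bracket vanishes as $r\to0$ (since $r^{N-1}\to0$ and $\delta,v_\lambda$ are regular with $\delta'(0)=v_\lambda'(0)=0$), and that at $r=r_\delta$ it equals $r_\delta^{N-1}\delta'(r_\delta)v_\lambda(r_\delta)$ because $\delta(r_\delta)=0$; here $\delta'(r_\delta)<0$, the zero being simple and downward. The whole strategy is to integrate this identity over one of the two subintervals determined by $r_\delta$, on which both $\delta$ and $I(u,\lambda)$ keep a fixed sign, and then to read off the sign of $v_\lambda(r_\delta)$.

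For the upper bound I may assume $K_\infty<1$, since otherwise $s^*=\infty$ and there is nothing to prove. Arguing by contradiction, suppose $u(r_\delta)\ge s^*$. As $u$ is decreasing (Proposition \ref{prop:2.1}(ii)), $u(r)\in[s^*,\tilde b)$ on all of $[0,r_\delta]$, so Lemma \ref{lem:2.5}(iii) gives $I(u,\lambda)<0$ there for every $\lambda>0$. Integrating the identity over $(0,r_\delta)$, where $\delta>0$, yields
\[
r_\delta^{N-1}\delta'(r_\delta)v_\lambda(r_\delta)=-\int_0^{r_\delta}r^{N-1}\delta\,I(u,\lambda)\,dr>0,
\]
and since $\delta'(r_\delta)<0$ this forces $v_\lambda(r_\delta)<0$ for every $\lambda>0$, contradicting Lemma \ref{lem:3.2}(ii). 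Hence $u(r_\delta)<s^*$.

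For the lower bound, suppose for contradiction that $u(r_\delta)\le b$. Then $u(r)<b$ on $(r_\delta,R)$, so $g(u)<0$ there, and by (G5) and \eqref{eq:2.5} one has $I(u,\lambda)=\lambda g(u)\{K_g(u)-1-\tfrac2\lambda\}>0$ on $(r_\delta,R)$ for every $\lambda>0$. I now integrate the identity over $(r_\delta,R)$. The boundary contribution at $R$ is harmless: if $d\in G$ it vanishes, because $\delta,\delta'$ decay exponentially under the standing contradiction hypothesis $\delta\to0$, while $v_\lambda=ru'+\lambda u\to0$ and $v_\lambda'\to0$ by \eqref{eq:2.2}; if $d\in N$ the term equals $R^{N-1}\delta'(R)\,v_\lambda(R)=R^{N-1}\delta'(R)Ru'(R)\le0$, using $\delta'(R)>0$ and $u'(R)<0$. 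Since $\delta<0$ and $I(u,\lambda)>0$ on $(r_\delta,R)$, the right-hand side $-\int_{r_\delta}^R r^{N-1}\delta\,I(u,\lambda)\,dr$ is strictly positive, and the integrated identity gives $r_\delta^{N-1}\delta'(r_\delta)v_\lambda(r_\delta)<0$, i.e.\ $v_\lambda(r_\delta)>0$ for every $\lambda>0$. But directly from the definition $v_\lambda(r_\delta)=r_\delta u'(r_\delta)+\lambda u(r_\delta)\to r_\delta u'(r_\delta)<0$ as $\lambda\to0^+$ (recall $u'(r_\delta)<0$), a contradiction. Therefore $u(r_\delta)>b$.

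The step I expect to require the most care is the lower bound, for two reasons. First, the correct choice of integration interval is essential: the lower bound must be run on $(r_\delta,R)$, where $\delta<0$ and, thanks to (G5), $I(u,\lambda)>0$ for \emph{all} $\lambda>0$, and then tested against the regime $\lambda\to0^+$; by contrast the upper bound is run on $(0,r_\delta)$ and tested against large $\lambda$ via Lemma \ref{lem:3.2}(ii). Recognizing this complementary structure is the conceptual heart of the argument. Second, the boundary analysis at $R$ is the only genuinely technical point: one must separate $d\in G$ (exponential decay of $\delta$ and of $v_\lambda$, the latter through \eqref{eq:2.2}) from $d\in N$ (where $\delta'(R)>0$ and $v_\lambda(R)=Ru'(R)<0$), and check in each case that the $r^{N-1}$-weighted Wronskian leaves no positive leftover that could spoil the sign of $v_\lambda(r_\delta)$.
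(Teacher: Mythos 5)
Your argument is correct, and it diverges from the paper's proof in an interesting way on one of the two bounds. For the upper bound $u(r_\delta)<s^*$ you and the paper use the same comparison pair: the paper applies the Sturm Comparison Principle (Lemma \ref{lem:A.1} (b)) to $\delta$, with potential $g'(u)$, and to $v_\lambda$ for large $\lambda$, with potential $g'(u)-I(u,\lambda)/v_\lambda$, the sign inputs being exactly yours (Lemma \ref{lem:2.5} (iii) and Lemma \ref{lem:3.2} (ii)); since the Wronskian identity $\bigl[r^{N-1}(\delta'v_\lambda-\delta v_\lambda')\bigr]'=-r^{N-1}\delta\,I(u,\lambda)$ is precisely how such comparison lemmas are proved, your integration over $(0,r_\delta)$ is the same argument in integrated form, with the small bonus that it outputs the sign $v_\lambda(r_\delta)<0$ directly instead of the existence of a zero. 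The genuine difference is in the lower bound $b<u(r_\delta)$. There the paper never touches $v_\lambda$ or the $I$-function: it uses the identity $\bigl[(r^{N-1}u')(r^{N-1}\delta')\bigr]'=-r^{2N-2}\bigl(g(u)\delta\bigr)'$, comparing $\delta$ with $u'$, integrates by parts over $(r_\delta,R)$, and needs only $g(u)<0$ and $\delta<0$ on that interval --- in particular (G5) is not used, so the paper's lower bound holds already under (G1)--(G4). You instead run the same $\delta$--$v_\lambda$ Wronskian on $(r_\delta,R)$, where the sign $I(u,\lambda)>0$ for every $\lambda>0$ costs you (G5) via \eqref{eq:2.5}, and you extract the contradiction from the limit $\lambda\to 0^+$ in $v_\lambda(r_\delta)=r_\delta u'(r_\delta)+\lambda u(r_\delta)$. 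What your route buys is uniformity: a single identity applied on the two complementary intervals, with the complementary regimes $\lambda$ large and $\lambda\to 0^+$, and no black-box appeal to Lemma \ref{lem:A.1}; what the paper's route buys is economy of hypotheses on the lower bound and an argument for it independent of the family $v_\lambda$. Your boundary analysis at $R$ is sound in both cases (for $d\in N$ one has $\delta'(R)>0$ and $v_\lambda(R)=Ru'(R)<0$; for $d\in G$ the exponential decay kills the weighted Wronskian at infinity), with only cosmetic slips: the decay of $v_\lambda$ and $v_\lambda'$ for $d\in G$ follows from the exponential decay of $u,u',u''$ in Proposition \ref{prop:2.1} (iii) rather than from \eqref{eq:2.2}, and in the case $d\in N$ the boundary term is in fact strictly negative, though your weaker estimate $\le 0$ suffices.
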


\begin{proof}
First we show that $b<u(r_{\delta})$. To this aim, suppose by contradiction
that $u(r_{\delta})\le b$. Then it follows that $u(r) < b$ for $r_{\delta}<r<R$
and hence $g(u)<0$ on $(r_{\delta},R)$ by (G2).

Next from \eqref{eq:2.1} and \eqref{eq:2.3}, one has
\begin{align}\label{eq:3.4}
\left[ (r^{N-1}u')(r^{N-1}\delta') \right]'
&= -r^{2N-2} \big(
g(u)\delta' +g'(u)u\delta' \big) \nonumber \\
&= -r^{2N-2} \big( g(u) \delta \big)'.
\end{align}
Integrating \eqref{eq:3.4} over $[r_{\delta},r]$ and using the integration by parts, we get
\begin{align*}
&r^{2N-2}u'(r)\delta'(r)-r_{\delta}^{2N-2} u'(r_{\delta})\delta'(r_{\delta})\\
&=-r^{2N-2}g\big( u(r) \big) \delta(r) 
+(2N-2) \int_{r_{\delta}}^r s^{2N-3} g \big( u(s) \big) \delta (s) \,ds.
\end{align*}

Now suppose that $d \in G$. 
Then since $u$, $u'$ and $\delta$ decay exponentially as $r \to \infty$,
we can pass the limit $r \to \infty$ to obtain
$$
0>-r_{\delta}^{2N-2} u'(r_{\delta})\delta'(r_{\delta})
=(2N-2) \int_{r_{\delta}}^{\infty}
s^{2N-3} g \big( u(s) \big) \delta (s) \,ds.$$
However since $g(u)<0$ and $\delta<0$ on $(r_{\delta},\infty)$,
this is a contradiction.
Next let $d\in N$. In this case, one has $u(R)=0$, $u'(R)<0$, 
$\delta(R)=0$ and $\delta'(R)>0$.
Taking $r=R$, we get
\begin{align*}
0&>R^{2N-2} u'(R)\delta'(R) 
-r_{\delta}^{2N-2} u'(r_{\delta})\delta'(r_{\delta}) \\
&=(2N-2) \int_{r_{\delta}}^{R}
s^{2N-3} g \big( u(s) \big) \delta (s) \,ds>0.
\end{align*}
This is a contradiction again and hence $b<u(r_{\delta})$.

Next we prove $u(r_{\delta})<s^*$. By the definition of $s^*$, 
it suffices to consider the case $K_{\infty}<1$.
Suppose by contradiction that $u(r_{\delta}) \ge s^*$.
Since $u$ is decreasing, one has $u \ge s^*$ on $(0,r_\delta)$.
Moreover as we have mentioned in Section 1,
we may assume that $u(0)<\tilde{b}$ by the Maximum Principle.
Thus by Lemma \ref{lem:2.5} (iii), we get
$$
 I(u(r), \lambda) < 0 \text{ for all } r \in (0, r_\delta) \ 
\hbox{and} \ \lambda >0.
$$
Moreover taking large $\lambda>0$, we have by Lemma \ref{lem:3.2} (ii)
that $v_{\lambda}>0$ on $(0,r_{\delta})$. Thus we obtain
$$
 g'(u) < g'(u) - \frac{I(u, \lambda)}{v_\lambda} \text{ on } (0, r_\delta)
\ \hbox{for sufficiently large} \ \lambda.
$$
From \eqref{eq:2.3}, \eqref{eq:3.2} and $\delta(r_{\delta})=0$,
we can apply the Sturm Comparison Principle (Lemma \ref{lem:A.1} (b) below)
to show that $v_\lambda$ has a zero on $(0, r_\delta)$.
This contradicts to Lemma \ref{lem:3.2} (ii) and hence the claim holds.
\end{proof}

Now by Lemma \ref{lem:3.3}, 
we can apply Lemma \ref{lem:2.5} (i) with $t=u(r_{\delta})$.
Putting $\underline{\lambda}=\Lambda \big( u(r_{\delta}) \big)$, we get
$$
I(s, \underline{\lambda}) < 0 \text{ for } s > u(r_\delta)
 \ \hbox{and} \ 
I(s, \underline{\lambda}) > 0 \text{ for } 0 < s < u(r_\delta).
$$
Then by Lemma \ref{lem:2.5} (ii) and the continuity of $u$, we also have 
$$
 I(s, \lambda) > 0 \text{ for } 
\left\{
\begin{array}{l}
0 < s < u(r_\delta) \ \hbox{and} \ 
\lambda \geq \underline{\lambda},\\
0<s \le u(r_{\delta}) \ \hbox{and} \
\lambda > \underline{\lambda}.
\end{array}
\right.
$$
Finally since $u$ is decreasing, we obtain
\begin{equation}
\label{eq:3.5}
 I(u(r), \underline{\lambda}) < 0 \text{ for } 0 < r < r_\delta, \ 
 I(u(r), \lambda) > 0 \text{ for } 
\left\{
\begin{array}{l}
r_\delta<r<R, \ 
\lambda \geq \underline{\lambda},\\
r_{\delta} \le r <R, \ \lambda > \underline{\lambda}.
\end{array}
\right.
\end{equation}
Next we investigate the sign of $v_{\lambda}$ near its zero.

\begin{lemma}\label{lem:3.4} We obtain the following properties.
\begin{itemize}
\item[\rm(i)] Let $\lambda \geq \underline{\lambda}$
and suppose that $v_\lambda$ has a zero $r_0 \in (r_\delta, R)$.
Then it follows that either $v_{\lambda}'(r_0) \ne 0$, 
or $v_{\lambda}'(r_0)=0$ and $v_{\lambda}''(r_0)>0$.
In other words, if $v_{\lambda}$ is negative before $r_0$, 
then $v_{\lambda}$ must be positive after $r_0$.
\item[\rm(ii)]
Suppose that $v_{\underline{\lambda}}$ has a zero $r_0 \in (0, r_\delta)$.
Then it follows that either $v_{\underline{\lambda}}'(r_0) \ne 0$, 
or $v_{\underline{\lambda}}'(r_0)=0$ and $v_{\underline{\lambda}}''(r_0)<0$.
In other words, if $v_{\underline{\lambda}}$ is positive before $r_0$, 
then $v_{\underline{\lambda}}$ must be negative after $r_0$.
\end{itemize} 
\end{lemma}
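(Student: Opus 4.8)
The plan is to extract everything from the governing equation \eqref{eq:3.1} evaluated precisely at the zero $r_0$, where the term $g'(u)v_\lambda$ drops out. At any point $r_0\in(0,R)$ with $v_\lambda(r_0)=0$, equation \eqref{eq:3.1} collapses to
$$
v_\lambda''(r_0) + \frac{N-1}{r_0}\, v_\lambda'(r_0) = I\big(u(r_0),\lambda\big).
$$
This single identity, combined with the sign information on $I\big(u(r),\lambda\big)$ recorded in \eqref{eq:3.5}, is all that is needed; the proof requires no comparison principle and no further ODE analysis.

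For part (i) I would take $r_0\in(r_\delta,R)$ and $\lambda\ge\underline{\lambda}$, so that the second line of \eqref{eq:3.5} gives $I\big(u(r_0),\lambda\big)>0$. If $v_\lambda'(r_0)\ne 0$ there is nothing to prove; otherwise the displayed identity forces $v_\lambda''(r_0)=I\big(u(r_0),\lambda\big)>0$, which establishes the stated dichotomy. To obtain the reformulation, I would assume $v_\lambda<0$ on an interval ending at $r_0$; by continuity this yields $v_\lambda'(r_0)\ge 0$. Then either $v_\lambda'(r_0)>0$, in which case $v_\lambda$ is positive just after $r_0$, or $v_\lambda'(r_0)=0$, in which case $v_\lambda''(r_0)>0$ makes $r_0$ a strict local minimum with value zero, so again $v_\lambda>0$ immediately to the right of $r_0$.

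Part (ii) is the mirror image, carried out with $\lambda=\underline{\lambda}$: for $r_0\in(0,r_\delta)$ the first line of \eqref{eq:3.5} gives $I\big(u(r_0),\underline{\lambda}\big)<0$, so if $v_{\underline{\lambda}}'(r_0)=0$ the same identity yields $v_{\underline{\lambda}}''(r_0)<0$, i.e.\ a strict local maximum. The reformulation then follows symmetrically: if $v_{\underline{\lambda}}>0$ before $r_0$ then $v_{\underline{\lambda}}'(r_0)\le 0$, and in either sub-case ($v_{\underline{\lambda}}'(r_0)<0$, or $v_{\underline{\lambda}}'(r_0)=0$ at a strict local maximum) the function is negative just after $r_0$.

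I do not anticipate a serious obstacle, since the entire argument reduces to one evaluation of \eqref{eq:3.1} at the zero together with the already-established sign of $I$. The only point requiring care is the bookkeeping in the degenerate case $v_\lambda'(r_0)=0$: one must correctly convert the sign of $v_\lambda''(r_0)$ into the local extremum type and then into the sign of $v_\lambda$ on the appropriate side of $r_0$, using the fact that the hypothesis on the behavior of $v_\lambda$ before $r_0$ already pins down the sign of $v_\lambda'(r_0)$.
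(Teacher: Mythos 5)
Your proposal is correct and follows essentially the same route as the paper: the paper's proof also evaluates \eqref{eq:3.1} at the zero in the degenerate case $v_\lambda(r_0)=v_\lambda'(r_0)=0$, obtaining $v_\lambda''(r_0)=I\big(u(r_0),\lambda\big)$ and concluding from the signs in \eqref{eq:3.5}, with part (ii) handled symmetrically. The only difference is that the paper leaves the conversion from the sign of $v_\lambda''(r_0)$ to the local sign change of $v_\lambda$ implicit, whereas you spell out that bookkeeping explicitly.
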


\begin{proof}
Assume that $v_{\lambda}(r_0)=0$ and $v_{\lambda}'(r_0)=0$ for some
$r_0 \in (r_{\delta},R)$.
Then from \eqref{eq:3.1} and \eqref{eq:3.5}, one has
$$
v_\lambda''(r_0) = I(u(r_0), \lambda)>0.
$$
Thus the claim holds. (ii) can be shown in a similar way.
\end{proof}

\begin{lemma}\label{lem:3.5}
Let $\lambda \geq \underline{\lambda}$. 
Then $v_\lambda$ has at most one zero on $(r_\delta, R)$.
More precisely if $v_\lambda$ has a zero $r_0 \in (r_\delta, R)$, 
then it follows that
$$
 v_\lambda > 0 \text{ on } (r_\delta, r_0), \ 
 v_\lambda < 0 \text{ on } (r_0, R) \ \hbox{and} \ 
v_\lambda'(r_0) <0.$$
\end{lemma}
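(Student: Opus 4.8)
The plan is to compare $v_\lambda$ with $\delta$ by means of a Wronskian-type quantity whose monotonicity is governed by the sign of the forcing term $I(u,\lambda)$. First I would assemble the sign information already available on $(r_\delta,R)$. Since $\delta$ has its unique zero at $r_\delta$ and $\delta(0)=1>0$, that zero is necessarily simple (otherwise $\delta\equiv 0$), so $\delta>0$ on $(0,r_\delta)$ and $\delta<0$ on $(r_\delta,R)$. By \eqref{eq:3.5} together with $\lambda\geq\underline{\lambda}$ one has $I(u(r),\lambda)>0$ for $r\in(r_\delta,R)$, and by Lemma \ref{lem:3.2} (i) we know $v_\lambda<0$ near $R$.

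Next I would introduce $W(r):=r^{N-1}\bigl(v_\lambda(r)\delta'(r)-v_\lambda'(r)\delta(r)\bigr)$. Multiplying \eqref{eq:2.3} by $r^{N-1}v_\lambda$ and \eqref{eq:3.1} by $r^{N-1}\delta$ and subtracting, the zeroth-order terms cancel and one is left with $W'(r)=-r^{N-1}I(u(r),\lambda)\,\delta(r)$. On $(r_\delta,R)$ the right-hand side is strictly positive, so $W$ is strictly increasing there. The decisive point is the value of $W$ at a zero $r_0\in(r_\delta,R)$ of $v_\lambda$: since $v_\lambda(r_0)=0$ we get $W(r_0)=-r_0^{N-1}\delta(r_0)\,v_\lambda'(r_0)$, and because $\delta(r_0)<0$ this shows that $W(r_0)$ and $v_\lambda'(r_0)$ have the same sign.

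With these facts in hand I would argue through the largest zero. Assume $v_\lambda$ vanishes somewhere in $(r_\delta,R)$ and let $r_0$ be the largest such point; then $v_\lambda<0$ on $(r_0,R)$ by Lemma \ref{lem:3.2} (i), so $v_\lambda'(r_0)\leq 0$. The borderline case $v_\lambda'(r_0)=0$ is excluded by Lemma \ref{lem:3.4} (i), which would force $v_\lambda''(r_0)>0$ and hence $v_\lambda>0$ on both sides of $r_0$, contradicting $v_\lambda<0$ to the right. Thus $v_\lambda'(r_0)<0$, and by the sign identity above $W(r_0)<0$. If a second zero $r_1<r_0$ existed, monotonicity would give $W(r_1)<W(r_0)<0$, whence $v_\lambda'(r_1)<0$ as well; taking $r_1$ to be the second largest zero, $v_\lambda$ would be negative just to the right of $r_1$ and positive just to the left of $r_0$, which is impossible because $v_\lambda$ keeps a constant sign on $(r_1,r_0)$. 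Hence $r_0$ is the unique zero, $v_\lambda>0$ on $(r_\delta,r_0)$, $v_\lambda<0$ on $(r_0,R)$, and $v_\lambda'(r_0)<0$, which is the assertion.

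The step I expect to be the main obstacle is not the counting but the degenerate ``touch'' configuration at the largest zero, where $v_\lambda'(r_0)=0$; ruling it out cleanly is precisely where Lemma \ref{lem:3.4} (i) is indispensable. It is also worth verifying carefully that \eqref{eq:3.5} really yields $I(u,\lambda)>0$ on all of $(r_\delta,R)$ for every $\lambda\geq\underline{\lambda}$, including the endpoint $\lambda=\underline{\lambda}$, so that the strict monotonicity of $W$ holds throughout the stated range of $\lambda$.
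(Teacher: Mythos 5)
Your proof is correct, but it runs on a genuinely different engine than the paper's. The paper first establishes, via the Sturm comparison principle (Lemma \ref{lem:A.1} (a)) applied to the homogeneous form \eqref{eq:3.2} of the $v_\lambda$-equation, a Claim that $v_\lambda$ must be negative on a right neighborhood of any zero in $(r_\delta,R)$: if $v_\lambda$ were positive between consecutive zeros $\hat r_0<\bar r_0$, then $g'(u)>g'(u)-I(u,\lambda)/v_\lambda$ there and $\delta$ would be forced to vanish in $(\hat r_0,\bar r_0)$, contradicting the standing assumption that $r_\delta$ is its only zero; the sign bookkeeping with Lemma \ref{lem:3.4} (i) then finishes. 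You instead integrate the Lagrange identity directly: with $W=r^{N-1}\bigl(v_\lambda\delta'-v_\lambda'\delta\bigr)$, equations \eqref{eq:2.3} and \eqref{eq:3.1} give $W'=-r^{N-1}I(u,\lambda)\,\delta>0$ on $(r_\delta,R)$ — your computation checks out, your endpoint worry about $\lambda=\underline{\lambda}$ is settled by the first line of \eqref{eq:3.5}, which gives strict positivity of $I(u(r),\lambda)$ on the open interval for all $\lambda\ge\underline{\lambda}$, and $\delta<0$ there because the unique zero $r_\delta$ is simple by ODE uniqueness, exactly as you argue. Both proofs consume the same hypotheses (the standing assumption on $\delta$ and the sign of $I$) but deploy them differently: the paper uses the assumption as the target of a contradiction through Lemma \ref{lem:A.1}, while you use it as sign input making $W$ strictly monotone. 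What your route buys is a self-contained argument that never divides by $v_\lambda$ (so no case-tracking of where $v_\lambda\ne 0$) and that yields $v_\lambda'(r_0)<0$ instantly from $W(r_0)=-r_0^{N-1}\delta(r_0)v_\lambda'(r_0)$; what the paper's route buys is reuse of the comparison lemma already needed in Lemmas \ref{lem:3.3}, \ref{lem:3.6} and in the proof of Proposition \ref{prop:2.3}, keeping the toolkit uniform. Both still need Lemma \ref{lem:3.4} (i) to exclude the degenerate touch $v_\lambda'(r_0)=0$, as you correctly identified. One small point you should make explicit: the existence of the ``second largest zero'' $r_1$ requires that zeros of $v_\lambda$ not accumulate at $r_0$; this is free in your setup, since your $W$-identity shows every zero in $(r_\delta,r_0]$ has strictly negative derivative, hence all zeros are simple and isolated (accumulation at $r_0$ would force $v_\lambda'(r_0)=0$, which you have excluded). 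With that one line added, your proof is complete.
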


\begin{proof}
First we prove the following claim:
\begin{claim*}
If $\hat{r}_0 \in (r_{\delta},R)$ is a zero of $v_{\lambda}$,
then $v_{\lambda}<0$ on some right neighborhood of $\hat{r}_0$.
\end{claim*} 
Indeed suppose by contradiction that $v_{\lambda}(r)>0$ for $r>\hat{r}_0$ near $\hat{r}_0$.
Then by Lemma \ref{lem:3.2} (i), $v_{\lambda}$ has a next zero $\bar{r}_0
\in (\hat{r}_0,R)$ and $v_{\lambda}>0$ on $(\hat{r}_0, \bar{r}_0)$. 
Thus from \eqref{eq:3.5}, one has
$$
 g'(u) > g'(u) - \frac{I(u, \lambda)}{v_{\lambda}} 
\text{ on } (\hat{r}_0, \bar{r}_0).$$
In this case, we can apply Lemma \ref{lem:A.1} (a) to conclude that $\delta$ must have a zero on 
$(\hat{r}_0, \bar{r}_0) \subset (r_{\delta},R)$.
This contradicts to the assumption which we made in the beginning of this section
and hence the claim holds.

Now we assume that there exists $r_0 \in (r_{\delta},R)$ such that
$v_{\lambda}(r_0)=0$. Then by the claim above, it follows that
$v_{\lambda}<0$ on a right neighborhood of $r_0$. 
Moreover if $v_{\lambda}$ has a next zero $r_1>r_0$, 
then $v_{\lambda}$ must change its sign from negative to positive by Lemma \ref{lem:3.4} (i).
This contradicts to the above claim provided $\hat{r_0}=r_1$ and hence
\begin{equation}
\label{eq:3.6}
v_\lambda<0 \text{ on } (r_0, R).
\end{equation}

Next we observe from \eqref{eq:3.6} and by Lemma \ref{lem:3.4} (i) that 
$v_{\lambda}>0$ on a left neighborhood of $r_0$. 
If there exists $r_2 \in (r_\delta, r_0)$ such that $v_{\lambda}(r_2)=0$,
it follows that $v_{\lambda} >0$ on 
$(r_2,r_0) \subset (r_{\delta},R)$. 
Then by applying the above claim with $\hat{r}=r_2$, we obtain a contradiction.
This implies that $v_{\lambda}>0$ on $(r_{\delta},r_0)$ and $v_{\lambda}'(r_0)<0$.
\end{proof}

\begin{lemma}
\label{lem:3.6}
$v_{\underline{\lambda}}$ has a unique zero $\underline{r} \in (0, r_\delta)$, that is,
$$
v_{\underline{\lambda}} > 0 \text{ on } (0, \underline{r}) \ \hbox{and} \ 
v_{\underline{\lambda}} < 0 \text{ on } (\underline{r}, R).$$
\end{lemma}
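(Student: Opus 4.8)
The plan is to compare $\delta$ and $v_{\underline{\lambda}}$ through a Wronskian-type identity rather than through the transformed equation \eqref{eq:3.2}, whose comparison coefficient $g'(u)-I(u,\underline{\lambda})/v_{\underline{\lambda}}$ changes sign across the zeros of $v_{\underline{\lambda}}$ and is therefore awkward to track on all of $(0,R)$. Multiplying \eqref{eq:2.3} by $r^{N-1}v_{\underline{\lambda}}$ and \eqref{eq:3.1} by $r^{N-1}\delta$ and subtracting, the terms containing $g'(u)$ cancel and one obtains, with $W(r):=r^{N-1}\big(\delta'(r)v_{\underline{\lambda}}(r)-v_{\underline{\lambda}}'(r)\delta(r)\big)$, the identity
$$ W'(r)=-r^{N-1}I\big(u(r),\underline{\lambda}\big)\,\delta(r) \quad\text{on } (0,R). $$
Since $\delta(0)=1$, $\delta'(0)=v_{\underline{\lambda}}'(0)=0$ and $N\ge 2$, one has $W(0)=0$.

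Next I would read off the sign of $W'$ from \eqref{eq:3.5} and the sign pattern of $\delta$. Because $r_\delta$ is the unique zero of $\delta$, $\delta(0)=1>0$, and (by this uniqueness together with the ODE, which excludes a double zero) $\delta'(r_\delta)<0$, we have $\delta>0$ on $(0,r_\delta)$ and $\delta<0$ on $(r_\delta,R)$. On $(0,r_\delta)$, \eqref{eq:3.5} gives $I(u,\underline{\lambda})<0$, while on $(r_\delta,R)$ it gives $I(u,\underline{\lambda})>0$; in both cases $I(u,\underline{\lambda})\,\delta<0$, so $W'>0$ there (with $W'=0$ only at $r_\delta$, where $\delta=0$). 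Hence $W$ is strictly increasing and $W>0$ on $(0,R)$. Evaluating at $r_\delta$, where $\delta(r_\delta)=0$, gives $W(r_\delta)=r_\delta^{N-1}\delta'(r_\delta)v_{\underline{\lambda}}(r_\delta)>0$, and since $\delta'(r_\delta)<0$ this forces $v_{\underline{\lambda}}(r_\delta)<0$.

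With $W>0$ in hand, the quotient $q:=v_{\underline{\lambda}}/\delta$ satisfies $q'=-W/(r^{N-1}\delta^2)<0$ wherever $\delta\ne 0$, so $q$ is strictly decreasing on $(0,r_\delta)$. Its value at the origin is $q(0)=v_{\underline{\lambda}}(0)/\delta(0)=\underline{\lambda}u(0)>0$, while as $r\to r_\delta^-$ one has $\delta\to 0^+$ and $v_{\underline{\lambda}}(r_\delta)<0$, so $q\to-\infty$. A strictly decreasing function passing from a positive value to $-\infty$ vanishes exactly once; at that point $\underline{r}\in(0,r_\delta)$ we get $v_{\underline{\lambda}}(\underline{r})=0$, and since $\delta>0$ on $(0,r_\delta)$ the sign of $v_{\underline{\lambda}}$ matches the sign of $q$, giving $v_{\underline{\lambda}}>0$ on $(0,\underline{r})$ and $v_{\underline{\lambda}}<0$ on $(\underline{r},r_\delta)$. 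This already yields uniqueness of the zero and the correct signs on $(0,r_\delta]$, with $v_{\underline{\lambda}}(r_\delta)<0$.

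The remaining step is to propagate $v_{\underline{\lambda}}<0$ across $r_\delta$ onto all of $(r_\delta,R)$, and this is the only place I expect real difficulty: on $(r_\delta,R)$ the factor $\delta$ is negative, so the monotonicity of $q$ by itself does not exclude an upward crossing of $v_{\underline{\lambda}}$ near the endpoint $R$. Here I would simply invoke Lemma \ref{lem:3.5} (applicable since $\lambda=\underline{\lambda}\ge\underline{\lambda}$): if $v_{\underline{\lambda}}$ had a zero $r_0\in(r_\delta,R)$, then Lemma \ref{lem:3.5} would force $v_{\underline{\lambda}}>0$ on $(r_\delta,r_0)$, contradicting $v_{\underline{\lambda}}(r_\delta)<0$ together with the continuity of $v_{\underline{\lambda}}$. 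Hence $v_{\underline{\lambda}}$ has no zero on $(r_\delta,R)$, and being negative at $r_\delta$ it remains negative throughout. Combining the two intervals yields $v_{\underline{\lambda}}>0$ on $(0,\underline{r})$ and $v_{\underline{\lambda}}<0$ on $(\underline{r},R)$, which is the assertion of the lemma.
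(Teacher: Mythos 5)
Your proof is correct, but it reaches the conclusion by a genuinely different route on the interval $(0,r_\delta]$ than the paper does. The paper works with the transformed equation \eqref{eq:3.2} and applies the Sturm Comparison Principle (Lemma \ref{lem:A.1}) twice: part (b) with the comparison $g'(u)<g'(u)-I(u,\underline{\lambda})/v_{\underline{\lambda}}$ to force a first zero $\underline{r}\in(0,r_\delta)$, then Lemma \ref{lem:3.4} (ii) to see that $v_{\underline{\lambda}}$ turns negative at $\underline{r}$, and part (a) with the reversed inequality to exclude a further zero in $(\underline{r},r_\delta]$; this yields $v_{\underline{\lambda}}<0$ on $(\underline{r},r_\delta]$. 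You instead integrate the pair \eqref{eq:2.3}, \eqref{eq:3.1} directly into the Wronskian identity $W'=-r^{N-1}I(u,\underline{\lambda})\delta$ (which is correct, as is $W(0)=0$ and the sign analysis from \eqref{eq:3.5} together with $\delta>0$ on $(0,r_\delta)$, $\delta'(r_\delta)<0$), and then run the monotone-quotient argument for $q=v_{\underline{\lambda}}/\delta$. This buys you several things at once in a single stroke: existence \emph{and} uniqueness of the zero $\underline{r}$ in $(0,r_\delta)$, the sign pattern there, and the strict inequality $v_{\underline{\lambda}}(r_\delta)<0$ from $W(r_\delta)>0$, all without dividing by $v_{\underline{\lambda}}$ in \eqref{eq:3.2} (whose coefficient is only defined where $v_{\underline{\lambda}}\neq 0$) and without Lemma \ref{lem:3.4} (ii) or Lemma \ref{lem:A.1}. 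The paper's version, on the other hand, reuses machinery already set up for Lemmas \ref{lem:3.3} and \ref{lem:3.5}, so its proof is uniform in style with the rest of Section 3. You also correctly identified the one place where the quotient argument cannot work --- on $(r_\delta,R)$ the sign of $\delta$ flips, so monotonicity of $q$ no longer controls $v_{\underline{\lambda}}$ --- and your resolution there (if $v_{\underline{\lambda}}$ had a zero $r_0\in(r_\delta,R)$, Lemma \ref{lem:3.5} would force $v_{\underline{\lambda}}>0$ on $(r_\delta,r_0)$, contradicting $v_{\underline{\lambda}}(r_\delta)<0$) is exactly the paper's final step. In short: same skeleton at the endpoint $R$, but a cleaner, more self-contained Wronskian mechanism in place of the double Sturm comparison on $(0,r_\delta]$.
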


\begin{proof}
First we show that 
$v_{\underline{\lambda}}$ has at least one zero on $(0,r_\delta)$.
Indeed if $v_{\underline{\lambda}}>0$ on $(0,r_\delta)$, 
we have from \eqref{eq:3.5} that
$$
 g'(u) < g'(u) - \frac{I(u, \underline{\lambda})}{v_{\underline{\lambda}}} \text{ on } (0, r_\delta).
$$
Then by Lemma \ref{lem:A.1} (b), $v_{\underline{\lambda}}$ must have a zero on $(0,r_\delta)$.
This is a contradiction and hence the existence of a zero 
$\underline{r} \in (0, r_\delta)$ holds.
Moreover we may assume that $\underline{r}$ is the first zero of $v_{\underline{\lambda}}$.

Now by Lemma \ref{lem:3.4} (ii), $v_{\underline{\lambda}}<0$ on a right neighborhood of $\underline{r}$. 
If $v_{\underline{\lambda}}$ has a next zero $r_1 \in (\underline{r}, r_\delta]$, 
it follows that $v_{\underline{\lambda}} < 0$ on $(\underline{r}, r_1)$. 
From \eqref{eq:3.5}, we get
$$
 g'(u) > g'(u) - \frac{I(u, \underline{\lambda})}{v_{\underline{\lambda}}} \text{ on } (\underline{r}, r_1).$$
Then by Lemma \ref{lem:A.1} (a), $\delta$ must have a zero on $(\underline{r},r_1)$. 
This contradicts to the assumption of Proposition \ref{prop:2.3} and hence 
$v_{\underline{\lambda}} <0$ on $(\underline{r}, r_\delta]$. 
In this case, $v_{\underline{\lambda}}$ cannot have a zero on $(r_\delta, R)$
by Lemma \ref{lem:3.5}. This completes the proof.
\end{proof}

Now by Lemma \ref{lem:3.2} (ii), we can choose large $\overline{\lambda} > \underline{\lambda}$
so that $v_{\overline{\lambda}}>0$ on $[0,r_\delta]$. 
Then by Lemmas \ref{lem:3.2} (i) and \ref{lem:3.5}, 
$v_{\overline{\lambda}}$ has a unique zero $\overline{r} \in (r_\delta ,R)$.

To prove the key lemma, we define
$$
 \theta(r) := - \frac{r u'(r)}{u(r)}
$$
as in \cite{Kw}. Then one has
\begin{gather}
\theta(r) - \lambda = -\frac{ v_\lambda(r)}{u(r)},
\label{eq:3.7} \\
\theta'(r) 
=
\frac{-(r u'(r))' - \theta(r) u'(r)}{u(r)} 
= - \frac{v_\lambda'(r)}{u(r)} \ 
\hbox{for} \ r, \lambda \ \hbox{with} \ \theta(r)=\lambda.
\label{eq:3.8}
\end{gather}
Now we are ready to prove the key Lemma. 

\begin{proof}[Proof of Lemma \ref{lem:3.1}]
From \eqref{eq:3.7}, \eqref{eq:3.8} and by Lemma \ref{lem:3.5}, it follows that
\begin{equation}
\label{eq:3.9}
\theta'(\overline{r}) >0 \ \hbox{and} \ 
\theta(r) > \theta(\overline{r}) = \overline{\lambda} \text{ for } r > \overline{r}.
\end{equation}
Moreover by Lemma \ref{lem:3.6}, we also have
\begin{equation}
\label{eq:3.10}
\theta(r) > \underline{\lambda} \ \hbox{for} \ r > \underline{r}
\ \hbox{and hence} \ 
\theta(r_\delta) > \underline{\lambda}.
\end{equation}

Next we claim that $\theta'>0$ on $(r_\delta, \overline{r}]$.
Indeed suppose by contradiction that 
$\theta'( \hat{r})=0$ for some $\hat{r} \in (r_\delta, \overline{r})$.
Putting $\hat{\lambda}:=\theta(\hat{r})$, it follows from \eqref{eq:3.7} that
$v_{\hat{\lambda}}(\hat{r})=0$. Moreover since 
$\hat{r}>r_{\delta} >\underline{r}$, we have from \eqref{eq:3.10} that
$\hat{\lambda}>\underline{\lambda}$. 
Thus by Lemma \ref{lem:3.5}, we get $v_{\hat{\lambda}}'(\hat{r})<0$
and hence $\theta'(\hat{r})>0$ from \eqref{eq:3.8}.
This is a contradiction and the claim holds.

Now from \eqref{eq:3.9}, \eqref{eq:3.10} and $\theta'>0$ on $(r_\delta, \overline{r}]$, 
it follows that
$$
\theta(r) > \theta(r_\delta) > \underline{\lambda} \text{ for all } r>r_\delta.
$$
Putting $\lambda_0:= \theta(r_\delta)$, 
we have from \eqref{eq:3.7} that 
$$
v_{\lambda_0}(r_{\delta})=0 \ \hbox{and} \ 
v_{\lambda_0}<0 \ \hbox{on} \ (r_{\delta},R).$$
Finally we show that $v_{\lambda_0}'(r_\delta)<0$.
If not, one has $v_{\lambda_0}'(r_\delta)=0$.
Then from \eqref{eq:3.1} and \eqref{eq:3.5}, it follows that
$$
 v_{\lambda_0}''(r_\delta) =I(u(r_\delta), \lambda_0) >0.$$
This implies that $v_{\lambda_0}>0$ on a right neighborhood of $r_{\delta}$,
which contradicts to the fact $v_{\lambda_0}<0$ on $(r_{\delta},R)$.
Thus we obtain $v'_{\lambda_0}(r_{\delta})<0$.
\end{proof}

Finally we prove Proposition \ref{prop:2.3} by using Lemma \ref{lem:3.1}.

\begin{proof}[Proof of Proposition \ref{prop:2.3}]
As we have mentioned in the beginning of this section, 
we assume that $\delta(r,d)$ has exactly one zero $r_{\delta} \in (0,R)$
and suppose by contradiction that 
$\delta(R)=0$ when $d \in N$ and 
$\delta(r)$ stays bounded at infinity when $d \in G$. 
Then it follows that 
$\delta(r) \to 0$ exponentially as $r \to \infty$ if $d \in G$. 

First we suppose that $d \in N$. 
In this case, one has by Lemma \ref{lem:3.1} and \eqref{eq:3.5} that
$$
g'(u) < g'(u)-\frac{I(u,\lambda_0)}{v_{\lambda_0}}
\ \hbox{on} \ (r_{\delta},R).$$
Since $\delta(r_{\delta})=\delta(R)=0$, we can apply Lemma \ref{lem:A.1}
to show that $v_{\lambda_0}$ has a zero on $(r_{\delta},R)$.
But this contradicts to Lemma \ref{lem:3.1}.

Next we consider the case $d \in G$. We put
$$
\varphi(r):= r^{N-1} g'(u(r)) \ \hbox{and} \ 
\psi(r):=r^{N-1}
\left(
g'(u(r)) - \frac{I(u(r), \lambda_0)}{v_{\lambda_0}(r)}
\right).
$$
Then from \eqref{eq:3.5} and by Lemma \ref{lem:3.1}, one has
$\varphi< \psi$ on $(r_\delta, \infty)$.
Moreover since $v_{\lambda_0} \ne 0$ on $(r_{\delta},\infty)$, 
we can rewrite \eqref{eq:2.3} and \eqref{eq:3.2} as
$$
(r^{N-1} \delta')' + \varphi \delta=0 \ \hbox{and} \ 
(r^{N-1} v_{\lambda_0}')' + \psi v_{\lambda_0}=0
$$
respectively. By a direct computation, one can obtain the following Picone identity holds:
$$
\left[
r^{N-1} \delta \delta' - r^{N-1} \delta^2 \frac{v_{\lambda_0}'}{v_{\lambda_0}}
\right]'
=
(\psi-\varphi) \delta^2 
+ r^{N-1}\left(\delta' - \delta \frac{v_{\lambda_0}'}{v_{\lambda_0}}\right)^2 
\ \hbox{on} \ (r_{\delta},\infty).
$$
Integrating it over $[r,t] \subset (r_{\delta},\infty)$, we get
\begin{align}\label{eq:3.11}
\int_r^t ( \psi-\varphi ) \delta^2 \,ds 
&\le 
\left(
t^{N-1} \delta(t) \delta'(t) - t^{N-1} \delta^2(t) \frac{v_{\lambda_0}'(t)}{v_{\lambda_0}(t)}
\right) \nonumber \\
&\quad -\left(
r^{N-1} \delta(r) \delta'(r) - r^{N-1} \delta(r) v_{\lambda_0}'(r)
 \frac{ \delta(r)}{v_{\lambda_0}(r)}
\right).
\end{align}

Next from \eqref{eq:2.1}, \eqref{eq:2.2} and by the definition of $v_{\lambda_0}$,
it follows that 
\begin{align*}
 \frac{v_{\lambda_0}'(t)}{v_{\lambda_0}(t)} 
&=
\frac{u'(t) + t u''(t) + \lambda_0 u'(t)}{t u'(t) + \lambda_0 u(t)} \\
&=
\frac{\displaystyle
\frac{2-N+\lambda_0}{t} \frac{u'(t)}{u(t)} - \frac{g(u(t))}{u(t)}
}{\displaystyle
\frac{u'(t)}{u(t)} + \frac{\lambda_0}{t}
} \to 
 - \sqrt{-g'(0)} \text{ as } t\to \infty.
\end{align*}
Since $\delta$ decays exponentially at infinity, we have
\begin{equation}\label{eq:3.12}
\lim_{t \to \infty}
\left(
t^{N-1} \delta(t) \delta'(t) - t^{N-1} \delta^2(t) \frac{v_{\lambda_0}'(t)}{v_{\lambda_0}(t)}
\right)=0.
\end{equation}
Moreover since $\delta(r_\delta)=0$ and 
$v_{\lambda_0}(r_\delta)=0$, 
we can apply l'H\^opital's rule to obtain
$$
 \lim_{r \to r_\delta+} \frac{\delta(r)}{v_{\lambda_0}(r)}
= \frac{\delta'(r_\delta)}{v_{\lambda_0}'(r_\delta)}.
$$
Thus we also have
\begin{equation}\label{eq:3.13}
\lim_{ r\to r_{\delta}+}
\left(
r^{N-1} \delta(r) \delta'(r) - r^{N-1} \delta(r) v_{\lambda_0}'(r)
 \frac{ \delta(r)}{v_{\lambda_0}(r)}
\right)=0.
\end{equation}
Now from \eqref{eq:3.11}, \eqref{eq:3.12} and \eqref{eq:3.13}, 
it follows that
$$
\int_{r_{\delta}}^{\infty} \big( \psi(s) -\varphi(s) \big)
\delta^2 (s) \,ds \le 0.$$
This is a contradiction because $\varphi < \psi$ on $(r_{\delta},\infty)$
and hence the proof is complete.
\end{proof}

\begin{remark}\label{rem:3.7}
As we have noted in Remark \ref{rem:2.6}, 
we can obtain Proposition \ref{prop:2.3} by the previous result in 
\cite{Mc} when $K_{\infty} \ge 1$.
Actually the key of the proof of the strict admissibility
was to construct a comparison function $v_{\lambda}$ having the same zero with $\delta$.
In \cite{Mc}, this construction was done by applying the I-theorem.
Our argument shows that we can construct such $v_{\lambda}$ without using the I-theorem.

We also observe that $K_{\infty} \ge 1$ is equivalent to the fact that
$g(s)$ has superlinear or asymptotically linear growth at infinity.
Since we were able to obtain Proposition \ref{prop:2.3} even if 
$K_{\infty}<1$, our result can cover the case where
the nonlinearity may have sublinear growth or may be negative at infinity.
\end{remark}

\section{Application to ground states of modified Schr\"odinger equations}

In this section, we study the uniqueness 
of positive radial solutions for the quasilinear elliptic problem of the form:
\begin{equation}\label{eq:4.1}
 -\mathop{\rm div} \big( a(u) \nabla u \big) + \frac{1}{2} a'(u) |\nabla u|^2 
= h(u) \text{ in } \mathbb{R}^N.
\end{equation}
As an application, we will also show the uniqueness and the non-degeneracy of
ground states for modified Schr\"odinger equations in Subsection 4.4.

We impose the following conditions on the nonlinear term $h(t)$ and
the quasilinear term $a(t)$.
\begin{itemize}
 \item[\rm(H1)] $h \in C^1[0,\infty)$, $h(0)=0$ and $h'(0)<0$.
 \item[\rm(H2)] There exist $\beta >0$ and $\tilde{\beta} \in (\beta ,\infty]$
such that $h(t)<0$ for $t \in (0,\beta)$, $h(t)>0$ for $t \in (\beta, \tilde{\beta})$ 
and $h'(\beta)>0$.
If $\tilde{\beta}<\infty$, then $h(t)<0$ on $(\tilde{\beta},\infty)$
and $h'(\tilde{\beta})<0$.
 \item[\rm(H3)] There exists $\tilde{\zeta}>0$ such that
$H(\tilde{\zeta})=\int_0^{\tilde{\zeta}} h(t) \,dt >0$.
 \item[\rm(H4)] $K_h(t)$ is decreasing on $(\beta,\tilde{\beta})$
and $K_h(t) \le 1$ on $(0,\beta)$.
 \item[\rm(H5)] There exists $\ell>0$ such that 
$
\left\{ 
\begin{array}{l}
\displaystyle \limsup_{t \to \infty} \frac{h(t)}{t^{ \frac{ (\ell+1)N+2}{N-2}}} \le 0 \ \hbox{if} \ N \ge 3, \vspace{0.5em} \\
\displaystyle \limsup_{t \to \infty} \frac{h(t)}{e^{ \tilde{\alpha} t^{\ell+2}}} \le 0 \ \hbox{for any} \ \tilde{\alpha}>0 \ \hbox{if} \ N=2.
\end{array}
\right.$
\end{itemize}
\begin{itemize}
 \item[\rm(A1)] $a \in C^2[0,\infty)$ and $\displaystyle \inf_{t \geq 0} a(t) >0$.
 \item[\rm(A2)] $a'(t) \geq 0$ on $(0,\infty)$.
 \item[\rm(A3)] $K_a(t)$ is non-decreasing on $[\beta,\tilde{\beta})$ and 
$K_a(t) \leq K_a(\beta)$ for $t\in (0,\beta)$.
 \item[\rm(A4)] There exists $a_\infty>0$ such that 
$\displaystyle \lim_{t \rightarrow \infty} \frac{a(t)}{t^{\ell}}= a_\infty$ ($\ell>0$ is defined in (H5)).
\end{itemize}
Here $K_h$ and $K_a$ are the growth functions of $h$ and $a$ respectively,
which are defined in \eqref{eq:1.2}.
We observe that if
$K_a(t)$ is non-decreasing on $[0,\infty)$, then (A3) automatically holds.

A typical example of $a(t)$ is given by $a(t)=1+\kappa |t|^{\ell}$ for
$\kappa>0$ and $\ell \ge 2$. 
Moreover by direct computations, we can see that 
$a(t)=1+|t|^{\ell_1}+|t|^{\ell_2}$ for $0<\ell_1<\ell_2$ or
$a(t)=|t|^2+e^{-ct^2}$ for $0<c \le 1$ satisfy (A1)-(A4).
As for the nonlinear term $h(t)$, typical examples are given by:
\begin{itemize}
\item Power nonlinearity:
$$h(t)=-\lambda t+|t|^{p-1}t \ \  \hbox{for} \ \lambda>0, \ell>0 
\ \hbox{and} \ 
\displaystyle 
\left\{
\begin{array}{ll}
1<p< \frac{(\ell+1)N+2}{N-2} & \hbox{if} \ N \ge 3,\\
1<p<\infty &\hbox{if} \ N=2.
\end{array}
\right.$$
\item Defocusing cubic-focusing quintic nonlinearity:
$$
h(t)=-t-t^3+t^5 \ \hbox{for} \ 
\left\{
\begin{array}{ll}
\ell > 4- \frac{12}{N} & \hbox{if} \ N \ge 4,\\
\ell >0 & \hbox{if} \ N=2,3.
\end{array}
\right.
$$
\item Focusing cubic-defocusing quintic nonlinearity:
$$
h(t)=-t+ct^3-t^5 \ \hbox{for} \ c> \frac{4\sqrt{3}}{3} 
\ \hbox{and} \ N \ge 2.
$$
\item Nagumo type nonlinearity:
$$
h(t)=t(t-c)(1-t) \ \hbox{for} \ 0<c<\frac{1}{2} 
\ \hbox{and} \ N \ge 2.
$$
\end{itemize}
By elementary calculations, one can show that these nonlinearities fulfill (H1)-(H5).
Similar statements also hold for quadratic-cubic nonlinearities: $h(t)=-t \pm c|t|t \mp |t|^2t$.

In this setting, we obtain the following result.
\begin{theorem}
\label{thm:4.1}
Assume (A1)--(A4) and (H1)--(H5). 
Then \eqref{eq:4.1} has a unique positive radial solution $u\in C^2(\mathbb{R}^N)$.
Moreover let $L:H^2(\mathbb{R}^N) \to L^2(\mathbb{R}^N)$ be the linearized operator of \eqref{eq:4.1} which is given by
\begin{align*}
L(\phi) =&
-a(u)\Delta \phi -a'(u) \nabla u \cdot \nabla \phi-\frac{1}{2}a''(u)|\nabla u|^2 \phi 
-a'(u)\Delta u \phi -h'(u)\phi.
\end{align*}
Then $\mathop{\rm Ker}\,(L) \big|_{H^1_{rad}(\mathbb{R}^N)} = \{ 0 \}$. 
\end{theorem}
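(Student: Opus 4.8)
The plan is to reduce the quasilinear problem \eqref{eq:4.1} to the semilinear problem \eqref{eq:1.1} by a change of the dependent variable, and then to invoke Theorem \ref{thm:1.1} and Corollary \ref{cor:1.2}. Set $f(u):=\int_0^u\sqrt{a(s)}\,ds$ and $v:=f(u)$. Since $a\in C^2$ with $\inf a>0$ by (A1), we have $f'(u)=\sqrt{a(u)}>0$, so $f$ is a $C^2$-diffeomorphism of $[0,\infty)$ onto $[0,\infty)$; surjectivity follows from (A4), which forces $\sqrt{a(u)}\sim\sqrt{a_\infty}\,u^{\ell/2}$ and hence $f(u)\to\infty$. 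A direct computation using $f''=a'/(2\sqrt a)$ gives
$$
-\Delta v-g(v)=\frac{1}{\sqrt{a(u)}}\Big(-\mathop{\rm div}(a(u)\nabla u)+\tfrac12 a'(u)|\nabla u|^2-h(u)\Big),
$$
where $g$ is defined by $g(f(u)):=h(u)/\sqrt{a(u)}$. Consequently $u$ is a positive radial solution of \eqref{eq:4.1} if and only if $v=f(u)$ is a positive radial solution of \eqref{eq:1.1}, this correspondence is bijective, and $u=f^{-1}(v)\in C^2$ inherits the regularity and exponential decay of $v$.

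The core of the argument is to verify that $g$ satisfies (G1)--(G6). Conditions (G1) and (G2) are immediate, since $g(0)=0$, $\mathop{\rm sign}g(v)=\mathop{\rm sign}h(u)$, and $g'(0)=h'(0)/a(0)<0$, $g'(b)=h'(\beta)/a(\beta)>0$ with $b:=f(\beta)$, $\tilde b:=f(\tilde\beta)$. For (G3) one uses the clean identity $\int_0^{f(t)}g(s)\,ds=\int_0^t h(\sigma)\,d\sigma=H(t)$, so (H3) yields $G(f(\tilde\zeta))=H(\tilde\zeta)>0$ with $f(\tilde\zeta)>b$ (as $H<0$ on $(0,\beta]$ forces $\tilde\zeta>\beta$). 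Condition (G6) follows from (H5) and (A4): since $f(u)\sim c\,u^{(\ell+2)/2}$ and $g(v)=h(u)a(u)^{-1/2}$, a bookkeeping of exponents converts the critical exponent $\frac{(\ell+1)N+2}{N-2}$ in (H5) into the Sobolev exponent $\frac{N+2}{N-2}$ for $g$ (and the analogous exponential condition when $N=2$). The decisive computation is the growth-function identity
$$
K_g\big(f(u)\big)=\frac{f(u)}{u\sqrt{a(u)}}\Big(K_h(u)-\tfrac12 K_a(u)\Big),
$$
which I would derive from $g'(v)=h'(u)/a(u)-h(u)a'(u)/(2a(u)^2)$.

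From this identity (G5) is short: on $(0,\beta)$ the prefactor satisfies $0<f(u)/(u\sqrt{a(u)})\le1$ because $\sqrt a$ is non-decreasing by (A2), while $K_a\ge0$ and $K_h\le1$ by (A2) and (H4); hence $K_g\le K_h\le1$ when the bracket is non-negative and $K_g<0\le1$ otherwise. The main obstacle is (G4), namely that $K_g$ is decreasing on $(b,\tilde b)$. The bracket $K_h-\tfrac12K_a$ is decreasing there by (H4) and (A3) and the prefactor is positive, but the bracket may change sign (this is exactly the sublinear or negative-at-infinity regime, e.g.\ $h(t)=-t-t^3+t^5$ with large $\ell$), so a crude product estimate fails. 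I would instead differentiate $K_g(f(u))$ in $u$ and show the result is $\le0$ on $(\beta,\tilde\beta)$, using $\tfrac{d}{du}\log\big(f/(u\sqrt a)\big)=\sqrt a/f-1/u-a'/(2a)$ together with $u\sqrt{a(u)}-f(u)=\int_0^u(\sqrt{a(u)}-\sqrt{a(s)})\,ds\ge0$ from (A2); the monotonicity hypotheses (H4) and (A3) are tailored precisely so that the prefactor's decay compensates the sign change of the bracket. This sign-tracking is the technical crux.

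Granting (G1)--(G6), Corollary \ref{cor:1.2} gives a unique non-degenerate positive radial solution $v$ of \eqref{eq:1.1}, hence a unique positive radial $u=f^{-1}(v)\in C^2$ of \eqref{eq:4.1}. For the kernel statement I would linearize the identity above at the solution $u$: since the linearization of the left-hand side of \eqref{eq:4.1} is exactly $L$ and $\mathcal{Q}(u)=0$, differentiating in the direction $\phi$ yields the operator correspondence
$$
L\phi=-\sqrt{a(u)}\,\mathcal{L}\big(\sqrt{a(u)}\,\phi\big),\qquad \mathcal{L}:=\Delta+g'(v).
$$
Thus $\phi\in\mathop{\rm Ker}(L)$ if and only if $\psi:=\sqrt{a(u)}\,\phi\in\mathop{\rm Ker}(\mathcal{L})$; if $\phi\in H^1_{rad}$, then $\psi$ is radial and, using the boundedness of $a(u),a'(u)$ on the range of $u$ and the exponential decay of $\nabla u$, also lies in $H^1_{rad}$. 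By the non-degeneracy part of Theorem \ref{thm:1.1}, $\mathcal{L}$ has no $0$-eigenvalue in $H^1_{rad}$, so $\psi\equiv0$ and therefore $\phi\equiv0$, which is the assertion.
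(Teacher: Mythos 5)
Your proposal follows the paper's own route in every step but one: your $f(u)=\int_0^u\sqrt{a(s)}\,ds$ is the inverse of the function defined in \eqref{eq:4.2}, your $g$ with $g(f(u))=h(u)/\sqrt{a(u)}$ is the paper's $g(s)=h\big(f(s)\big)f'(s)$, your verifications of (G1)--(G3), (G5), (G6) match Subsections 4.2--4.3, your identity $K_g=\big(K_h-\tfrac12K_a\big)/\phi$ with $\phi(t)=t\sqrt{a(t)}/\int_0^t\sqrt{a(\tau)}\,d\tau$ is exactly the one in the proof of Lemma \ref{lem:4.7}, and your kernel transfer $L\phi=-\sqrt{a(u)}\,\mathcal{L}\big(\sqrt{a(u)}\,\phi\big)$ followed by Theorem \ref{thm:1.1} is Lemma \ref{lem:4.3} combined with Corollary \ref{cor:1.2}. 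The sole departure is at (G4), the step you yourself call the crux, and there your proposal has a genuine gap: nothing is actually proved. Two concrete problems. First, a misattribution: the inequality $u\sqrt{a(u)}-f(u)=\int_0^u\big(\sqrt{a(u)}-\sqrt{a(s)}\big)\,ds\ge0$, which follows from (A2), only yields the bound $\phi\ge1$ (what (G5) needs); the monotonicity of your prefactor, i.e.\ $\phi'\ge0$ on $[\beta,\infty)$, is equivalent to $\left(1+\frac{ta'(t)}{2a(t)}\right)\int_0^t\sqrt{a(\tau)}\,d\tau\ \ge\ t\sqrt{a(t)}$, which is precisely Lemma \ref{lem:4.6} and genuinely requires (A3) (it fails under (A2) alone, e.g.\ for $a$ increasing and then constant, where $\phi$ decreases back toward $1$).

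Second, and more seriously, your claim that (H4) and (A3) are ``tailored precisely so that the prefactor's decay compensates the sign change of the bracket'' is not derivable from the hypotheses. Writing $n:=K_h-\tfrac12K_a$, where $n<0$ the inequality $(n/\phi)'\le0$ amounts to $|n'|/|n|\ \ge\ \phi'/\phi$, and this can fail for admissible data: take $a\equiv A$ up to a large $T$, then let $K_a$ rise quickly to $\ell$ so that $a(t)\sim A(t/T)^{\ell}$ afterwards (so (A1)--(A4) hold with $K_a$ non-decreasing), and $h(t)=-t+t^p$ with $1<p<\ell/2$; just beyond the transition $\phi$ is still strictly climbing toward $\ell/2+1$ while $n\approx p-\ell/2<0$ is essentially constant there ($K_a'=0$ and $K_h'=O(t^{-p})$), so $K_g=n/\phi$ is strictly \emph{increasing} on that interval. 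Hence no derivative computation can deliver (G4) on all of $(b,\tilde b)$ --- and the same example shows that the decreasing-numerator-over-nondecreasing-denominator argument is only airtight where the numerator is nonnegative. The correct repair is to prove exactly what the uniqueness machinery consumes: since $\phi\ge1$, $K_g(s)>1$ forces $n>\phi>0$, and on $\{K_g>1\}=(b,s^*)$ the quotient of the positive, decreasing numerator (by (H4), (A3)) over the non-decreasing $\phi$ (Lemma \ref{lem:4.6}) is honestly strictly decreasing, while beyond $s^*$ one gets $K_g\le1$ automatically (while $n>0$ the decreasing quotient has dropped below $1$; once $n\le0$, $K_g\le0$) --- and inspection of Lemma \ref{lem:2.5}, the only consumer of (G4), shows this weaker statement suffices for Proposition \ref{prop:2.3} and Theorem \ref{thm:1.1}. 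With (G4) handled this way, the remainder of your argument is correct and coincides with the paper's.
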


\subsection{Dual approach and auxiliary lemmas}

In this subsection, we introduce a {\it dual approach} of \eqref{eq:4.1}
and prepare some auxiliary lemmas. 

To this aim, let $f(s)$ be a unique solution of the following ODE:
\begin{equation}\label{eq:4.2}
f'(s)= \frac{1}{\sqrt{a \big( f(s) \big)}} \text{ for } s>0, \quad f(0)=0.
\end{equation}
From (A1), we can see that $f$ is well-defined and $f\in C^2[0,\infty)$.
We also extend $f(s)$ as an odd function for $s<0$.
Then we have the following.

\begin{lemma}\label{lem:4.2}
$f(s)$ satisfies the following properties.
\begin{itemize}
\item[\rm(i)] $f>0$ and $f'>0$ on $(0,\infty)$.
Especially $f$ is monotone on $(0,\infty)$ and hence the inverse $f^{-1}$ exists.
\item[\rm(ii)] $\displaystyle s=\int_0^{f(s)} \sqrt{a(\tau)} \,d\tau$
and $\displaystyle f''(s)=- \frac{a' \big( f(s) \big)}
{2  a^2 \big( f(s) \big) }$.
\item[\rm(iii)] $f(s)$ has the following asymptotic behavior.
\begin{align*}
&\lim_{ s \to \infty} 
\frac{f(s)}{s^{2\over \ell +2}} =
\left( \frac{\ell+2}{2 \sqrt{a_\infty}} \right)^{2\over \ell+2}, \quad
\lim_{ s \to \infty} 
\frac{f'(s)}{s^{\frac{2}{\ell +2}-1}} =
\frac{2}{\ell+2}\left( \frac{\ell+2}{2 \sqrt{a_\infty}} \right)^{2\over \ell+2},\\
&\lim_{s \to \infty} \frac{sf'(s)}{f(s)}
= \frac{2}{\ell+2}.
\end{align*}
\end{itemize}
\end{lemma}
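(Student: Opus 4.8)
The plan is to handle the three parts in order, using the defining ODE \eqref{eq:4.2} throughout. For (i), I would first observe that (A1) gives $a(f(s)) \geq \inf_{t\geq 0} a(t)>0$, so the right-hand side of \eqref{eq:4.2} is well-defined, strictly positive, and bounded above by $\big(\inf_{t\geq 0}a(t)\big)^{-1/2}$; in particular $f'>0$ wherever $f$ is defined. Since the slope field $y\mapsto a(y)^{-1/2}$ is $C^1$ (because $a\in C^2$ stays away from $0$) and uniformly bounded, standard ODE theory yields a unique solution on all of $[0,\infty)$ with no finite-time blow-up, and a one-step bootstrap through \eqref{eq:4.2} gives $f\in C^2$. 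From $f(0)=0$ together with $f'>0$ we get $f>0$ on $(0,\infty)$, and strict monotonicity produces the inverse $f^{-1}$.

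For (ii), I would rewrite \eqref{eq:4.2} as $\sqrt{a(f(s))}\,f'(s)=1$ and integrate over $[0,s]$; the substitution $\tau=f(\sigma)$, legitimate since $f$ is a $C^1$ diffeomorphism by (i), converts the left-hand side into $\int_0^{f(s)}\sqrt{a(\tau)}\,d\tau$, which is exactly the first identity. The formula for $f''$ then follows by differentiating $f'(s)=a(f(s))^{-1/2}$ and substituting $f'(s)=a(f(s))^{-1/2}$ once more.

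The substance is in (iii). First I would record that $f(s)\to\infty$ as $s\to\infty$: the identity in (ii) shows $\int_0^{f(s)}\sqrt{a}\,d\tau=s\to\infty$, which forces $f(s)\to\infty$ since $\int_0^M\sqrt{a}$ is finite for finite $M$. The key estimate is the Abelian-type asymptotic
$$
\int_0^M \sqrt{a(\tau)}\,d\tau \sim \frac{2\sqrt{a_\infty}}{\ell+2}\,M^{(\ell+2)/2}
\quad\text{as } M\to\infty,
$$
which I would obtain by applying l'H\^opital's rule to the quotient of $\int_0^M\sqrt{a}\,d\tau$ by $M^{(\ell+2)/2}$: the derivative quotient is $\frac{2}{\ell+2}\sqrt{a(M)/M^{\ell}}$, which tends to $\frac{2}{\ell+2}\sqrt{a_\infty}$ by (A4). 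Setting $M=f(s)$ and combining with $s=\int_0^{f(s)}\sqrt{a}$ gives $s\sim\frac{2\sqrt{a_\infty}}{\ell+2}f(s)^{(\ell+2)/2}$, and solving for $f(s)$ yields the first limit.

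The remaining two limits follow from this. For the second, the ODE gives $f'(s)=a(f(s))^{-1/2}\sim a_\infty^{-1/2}f(s)^{-\ell/2}$, into which I substitute the first asymptotic, checking that the resulting constant $a_\infty^{-1/2}\big(\frac{\ell+2}{2\sqrt{a_\infty}}\big)^{-\ell/(\ell+2)}$ equals $\frac{2}{\ell+2}\big(\frac{\ell+2}{2\sqrt{a_\infty}}\big)^{2/(\ell+2)}$ (both reduce to the same power of $\frac{\ell+2}{2\sqrt{a_\infty}}$). The third limit is cleanest to read off directly: writing $\frac{sf'(s)}{f(s)}=\frac{\int_0^{M}\sqrt{a}\,d\tau}{M\sqrt{a(M)}}$ with $M=f(s)$ and inserting the integral asymptotic together with $\sqrt{a(M)}\sim\sqrt{a_\infty}M^{\ell/2}$ collapses the quotient to $\frac{2}{\ell+2}$ at once; alternatively it follows by dividing the first two limits. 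The only delicate point in the whole argument is the integral asymptotic, and the l'H\^opital computation dispatches it in a single line.
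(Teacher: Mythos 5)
Your proposal is correct, and it is in fact substantially more than the paper provides: the paper's ``proof'' of this lemma consists of the single remark that (i) and (ii) follow from \eqref{eq:4.2} and that (iii) follows from (A4), with the actual argument for (iii) deferred entirely to the reference \cite{GS}. You have supplied a complete, self-contained argument, and every step checks out. The well-posedness discussion in (i) is sound: (A1) makes the slope field $y\mapsto a(y)^{-1/2}$ locally Lipschitz and globally bounded by $\underline{a}^{-1/2}$, which rules out finite-time breakdown, and the bootstrap through \eqref{eq:4.2} gives $f\in C^2$. Part (ii) is the routine substitution and differentiation, exactly as one would expect. For (iii), your identification of the Abelian asymptotic $\int_0^M\sqrt{a(\tau)}\,d\tau\sim\frac{2\sqrt{a_\infty}}{\ell+2}M^{(\ell+2)/2}$ as the one nontrivial ingredient is exactly right, and the l'H\^opital computation is valid since both numerator and denominator diverge (the numerator because $\sqrt{a}\geq\sqrt{\underline{a}}>0$, which is also what forces $f(s)\to\infty$ via the identity in (ii)). I verified the constant in the second limit: with $C=\frac{\ell+2}{2\sqrt{a_\infty}}$ one has $a_\infty^{-1/2}=\frac{2}{\ell+2}C$, so $a_\infty^{-1/2}C^{-\ell/(\ell+2)}=\frac{2}{\ell+2}C^{2/(\ell+2)}$ as claimed, and the exponent bookkeeping $s^{2/(\ell+2)-1}=s^{-\ell/(\ell+2)}$ is consistent. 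Your direct treatment of the third limit, rewriting $\frac{sf'(s)}{f(s)}$ as $\frac{\int_0^M\sqrt{a}\,d\tau}{M\sqrt{a(M)}}$ with $M=f(s)$, is a clean touch that avoids compounding the two earlier asymptotics. In short: same mathematical route one finds in \cite{GS}, but written out in full where the paper only cites; no gaps.
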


\begin{proof}
(i) and (ii) follow from \eqref{eq:4.2}.
Moreover from (A4), we can show that (iii) holds.
(For the proof, we refer to \cite{GS}.)
\end{proof}

Now we consider the following semilinear elliptic problem:
\begin{equation}
\label{eq:4.3}
- \Delta v = h \big( f(v) \big) f'(v) \text{ in } \mathbb{R}^N,
\end{equation}
which we call a {\it dual problem} of \eqref{eq:4.1}. 
We also define the linearized operator 
$\tilde{L}:H^2(\mathbb{R}^N) \to L^2(\mathbb{R}^N)$ of \eqref{eq:4.3} by 
\begin{equation*}
\tilde{L}(\tilde{\phi}):=-\Delta \tilde{\phi}
-\left( h'\big( f(v) \big) f'(v)^2 +h \big(f(v) \big) f''(v) \right)\tilde{\phi}. 
\end{equation*}
Then we have the following relation between \eqref{eq:4.1} and \eqref{eq:4.3}.

\begin{lemma}\label{lem:4.3} 
Let $X=\{ u\in H^1(\mathbb{R}^N) \ ; \ a(u)|\nabla u|^2 \in L^1(\mathbb{R}^N) \}$.
\begin{itemize}
\item[\rm(i)] 
$u\in X\cap C^2(\mathbb{R}^N)$ is a positive radial solution of \eqref{eq:4.1}
if and only if $v=f^{-1}(u)\in H^1\cap C^2(\mathbb{R}^N)$ is a positive radial solution of \eqref{eq:4.3}.
\end{itemize}

\noindent Let $u\in X\cap C^2(\mathbb{R}^N)$ is a positive solution of \eqref{eq:4.1} and put $v=f^{-1}(u)$. Then

\begin{itemize}
\item[\rm(ii)] 
For $\phi\in H^2(\mathbb{R}^N)$, let $\tilde{\phi}=\sqrt{a(u)}\phi$. 
Then $\tilde{\phi} \in H^2(\mathbb{R}^N)$ and the following identity holds: 
\begin{equation*}
\tilde{L}(\tilde{\phi})={1\over \sqrt{a(u)}}L(\phi).
\end{equation*}
\item[\rm(iii)] 
$\phi \in \mathop{\rm Ker}\, (L)$ if and only if $\tilde{\phi}=\sqrt{a(u)}\phi \in \mathop{\rm Ker}\,(\tilde{L})$.
\item[\rm(iv)] 
$\displaystyle \mathop{\rm Ker}\,(L)= \mathop{\rm span} \left\{ {\partial u \over \partial x_i} \right\}_{i=1}^N$ 
if and only if 
$\displaystyle \mathop{\rm Ker}\,(\tilde{L})= \mathop{\rm span} \left\{ {\partial v \over \partial x_i} \right\}_{i=1}^N$.
\item[\rm(v)] 
$u$ is non-degenerate if and only if $v$ is non-degenerate.
\end{itemize}
\end{lemma}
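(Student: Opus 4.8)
The plan is to treat the five assertions as a cascade: the substantive work lies in (i) and (ii), and (iii)--(v) follow formally. The backbone throughout is the change of variables $u=f(v)$ together with the defining ODE $f'(s)=1/\sqrt{a(f(s))}$ from \eqref{eq:4.2}.

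First I would establish (i) by direct substitution. With $u=f(v)$ the chain rule gives $\nabla u=f'(v)\nabla v$ and $\Delta u=f''(v)|\nabla v|^2+f'(v)\Delta v$. Since \eqref{eq:4.2} yields $a(f(v))f'(v)=\sqrt{a(f(v))}$, one computes $a(u)\nabla u=\sqrt{a(f(v))}\,\nabla v$, and I expect the $|\nabla v|^2$ contribution from $\mathop{\rm div}(a(u)\nabla u)$ to cancel exactly against the term $\frac12 a'(u)|\nabla u|^2$, leaving $-\sqrt{a(f(v))}\,\Delta v=h(f(v))$; dividing by $\sqrt{a(f(v))}$ recovers \eqref{eq:4.3}. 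Because $f$ is an increasing $C^2$ diffeomorphism with $f(0)=0$ by Lemma \ref{lem:4.2} (i)--(ii), positivity, radial symmetry and $C^2$-regularity transfer in both directions. For the space equivalence the clean identity $|\nabla v|^2=a(u)|\nabla u|^2$ (obtained from $(f^{-1})'(u)=\sqrt{a(u)}$) shows that $a(u)|\nabla u|^2\in L^1$ if and only if $\nabla v\in L^2$; the $L^2$-membership is matched near spatial infinity using $f'(0)=1/\sqrt{a(0)}\in(0,\infty)$, so that $f$ is asymptotically linear at $0$, while near the origin both $u$ and $v$ are bounded.

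Part (ii) is the computational heart and the step I expect to be the main obstacle. Conceptually the identity must hold because perturbing $v$ by $\tilde\phi$ corresponds, through $u=f(v)$, to perturbing $u$ by $f'(v)\tilde\phi=\tilde\phi/\sqrt{a(u)}$, so that the conjugation $\phi\mapsto\sqrt{a(u)}\,\phi$ ought to intertwine $L$ with $\sqrt{a(u)}\,\tilde L$. To make this rigorous I would insert $\tilde\phi=\sqrt{a(u)}\,\phi$ into the definition of $\tilde L$, expand each derivative by the product and chain rules, and repeatedly eliminate $f'(v)$ and $f''(v)$ using \eqref{eq:4.2} and the formula $f''(v)=-a'(f(v))/\bigl(2a(f(v))^2\bigr)$ from Lemma \ref{lem:4.2} (ii). The delicate bookkeeping lies in matching the terms carrying $a''(u)$, $a'(u)\Delta u$ and $\nabla u\cdot\nabla\phi$ against $L(\phi)$; the relation for $f''(v)$ is precisely what forces the surplus gradient terms to cancel, mirroring the cancellation already seen in (i). The membership $\tilde\phi\in H^2(\mathbb{R}^N)$ follows from $\inf a>0$ in (A1) together with the boundedness and exponential decay of $u$ and its derivatives.

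Finally, (iii)--(v) are formal consequences of (ii). Since $\sqrt{a(u)}\ge\sqrt{\inf a}>0$ pointwise, the map $\phi\mapsto\tilde\phi=\sqrt{a(u)}\,\phi$ is a linear bijection of $H^2(\mathbb{R}^N)$, and by (ii) it carries $\mathop{\rm Ker}\,(L)$ onto $\mathop{\rm Ker}\,(\tilde{L})$, which is (iii). For (iv) the key observation is that this same map sends each $\partial u/\partial x_i$ to $\partial v/\partial x_i$: indeed $\partial u/\partial x_i=f'(v)\,\partial v/\partial x_i=(1/\sqrt{a(u)})\,\partial v/\partial x_i$, so that $\sqrt{a(u)}\,\partial u/\partial x_i=\partial v/\partial x_i$. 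Hence the bijection identifies $\mathop{\rm span}\{\partial u/\partial x_i\}$ with $\mathop{\rm span}\{\partial v/\partial x_i\}$, and together with (iii) the equality $\mathop{\rm Ker}\,(L)=\mathop{\rm span}\{\partial u/\partial x_i\}$ holds if and only if $\mathop{\rm Ker}\,(\tilde{L})=\mathop{\rm span}\{\partial v/\partial x_i\}$. Part (v) is then immediate, non-degeneracy being by definition the statement that the kernel reduces to the span of the translation modes.
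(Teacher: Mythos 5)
Your proposal is correct, but note that the paper itself contains no proof of this lemma: it simply defers to the literature (part (i) to \cite{GS}, parts (ii)--(v) to \cite{ASW1} and \cite[Lemmas 2.7 and 2.8]{AW2}). Your direct verification is essentially the argument those references carry out, so you have filled in a proof the paper omits rather than diverged from it: the substitution $u=f(v)$ with the cancellation of the $|\nabla v|^2$ terms in (i), the conjugation identity in (ii), and the observation $\sqrt{a(u)}\,\partial u/\partial x_i=\partial v/\partial x_i$ that reduces (iii)--(v) to formal consequences are all as expected, and your computations for (i) and (iv) check out.

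One point deserves to be made explicit in (ii), where your plan says the cancellation is forced by \eqref{eq:4.2} and the formula for $f''$ alone, ``mirroring the cancellation already seen in (i).'' It does not quite mirror it: in (i) the cancellation is a pointwise algebraic identity valid for any $C^2$ function, whereas in (ii) the algebra leaves the residual term
\begin{equation*}
\sqrt{a(u)}\,\tilde{L}(\tilde{\phi})-L(\phi)
=\frac{a'(u)}{2a(u)}\left( a(u)\Delta u+\frac{1}{2}a'(u)|\nabla u|^2+h(u)\right)\phi,
\end{equation*}
and the bracket vanishes precisely because $u$ solves \eqref{eq:4.1} (equivalently, because $v$ solves \eqref{eq:4.3}); this is why the lemma assumes $u$ is a positive solution rather than an arbitrary function. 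Your heuristic (the conjugation intertwines the two linearizations) implicitly contains this, since linearizations are taken at solutions, but the bookkeeping step ``eliminate $f'$ and $f''$'' must be supplemented by substituting $\Delta u$ from the equation, otherwise the term-by-term matching fails. With that one insertion, your cascade (ii) $\Rightarrow$ (iii) $\Rightarrow$ (iv) $\Rightarrow$ (v), using that multiplication by $\sqrt{a(u)}$ is a bijection of $H^2(\mathbb{R}^N)$ thanks to (A1) and the boundedness of $u$ and its derivatives, is complete.
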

For the proof of (i), we refer to \cite{GS}.
The proof of (ii)-(v) can be found in \cite{ASW1}.
(See also \cite[Lemmas 2.7 and 2.8]{AW2}.)
By Lemma \ref{lem:4.3}, it suffices to study the uniqueness and the non-degeneracy of positive radial solutions of \eqref{eq:4.3}.

\begin{remark}\label{rem:4.4}
Let us consider the most typical case:
$$
h(t)=-\lambda t+|t|^{p-1}t \ \hbox{and} \ 
a(t)=1+\kappa |t|^2 \ (\ell=2).$$
Then by Lemma \ref{lem:4.2} (iii), we can see that
$g(s):= h \big( f(s) \big) f'(s)$ satisfies
$$
\frac{g(s)}{s^{p-1 \over 2}} \to C>0 \ \hbox{as} \ s \to \infty$$
for some $C>0$.
This implies that if $1<p<3$, the nonlinear term of
the converted semilinear problem \eqref{eq:4.3} 
has a sublinear growth at infinity.
\end{remark}

\subsection{Existence of a positive radial solution of \eqref{eq:4.1}}

In this subsection, we prove the existence of a positive radial solution 
$u\in C^2(\mathbb{R}^N)$ of \eqref{eq:4.1}. 
To this end, we show that the function
$$
g(s):= h \big( f(s) \big) f'(s)$$
satisfies (G1), (G3) and (G6) in Section 1.
For $s<0$, we extend $g(s)$ as an odd function.

First we can easily see that $g \in C^1[0,\infty)$.
Moreover since $h(0)=0$ and $f(0)=0$, we also have $g(0)=0$.
Next one has
$$
g'(0)=\lim_{s \to 0} \frac{g(s)}{s}
= \lim_{s \to 0} \frac{ h \big( f(s) \big)}{f(s)}
\frac{f(s)-f(0)}{s} f'(s).$$
Since $f(s) \to 0$ as $s \to 0$, it follows that
$$
\lim_{s \to 0} \frac{ h \big( f(s) \big)}{f(s)}
=\lim_{ t \to 0} \frac{h(t)-h(0)}{t}=h'(0).$$
Thus from (H1), we get
$$
g'(0)=\lim_{s \to 0} \frac{g(s)}{s}=h'(0)f'(0)^2 <0$$
and hence (G1) holds.

To prove (G3), we observe that
\begin{equation*}
G(s)=\int_0^s h \big( f(\tau) \big) f'(\tau)\,d\tau
=H \big( f(s) \big)- H \big( f(0) \big)
=H \big( f(s) \big).
\end{equation*}
Let $\tilde{\zeta}>0$ be a constant in (H3) and put $\zeta=f^{-1}(\tilde{\zeta})$. 
Then from (H3), it follows that $G(\zeta)=H(\tilde{\zeta})>0$
and hence (G3) holds.

Finally we show that (G6) holds. When $N \ge 3$, we have
by (H5) and Lemma \ref{lem:4.2} (iii) that
$$
\limsup_{s \to \infty} \frac{g(s)}{s^{ N+2 \over N-2}}
=\limsup_{s\to \infty}
\frac{ h \big( f(s) \big)}{\big( f(s) \big)^{\frac{(\ell+1)N+2}{N-2}}}
\left( \frac{f(s)}{s^{2 \over \ell+2}} \right)^{ (\ell+2)N \over N-2}
\frac{sf'(s)}{f(s)} \le 0.$$
Next suppose that $N=2$. By Lemma \ref{lem:4.2} (iii), 
there exists $C>0$ such that $f(s)^{\ell +2} \le Cs^2$ for large $s>0$.
Moreover by the definition of $f(s)$ and (A1), 
putting $\displaystyle \underline{a}:=\inf_{t\geq 0}a(t)>0$,
we also have $f'(s) \le \frac{1}{\sqrt{\underline{a}}}$.
Thus from (H5), we obtain
$$
\limsup_{s \to \infty} \frac{g(s)}{e^{\alpha s^2}}
=\limsup_{s \to \infty} 
\frac{ h \big( f(s) \big)}{e^{ \frac{\alpha}{C}f(s)^{\ell +2}}} 
\frac{e^{ \frac{\alpha}{C}f(s)^{\ell +2}}}{e^{\alpha s^2}} f'(s)
\le \frac{1}{\sqrt{\underline{a}}} \limsup_{s\to \infty}
\frac{ h \big( f(s) \big)}{e^{ \frac{\alpha}{C}f(s)^{\ell +2}}}
\le 0.$$

Now since (G1), (G3) and (G6) are satisfied, we can apply results in
\cite{BGK, BL, HIT} to obtain the existence of a
positive radial solution 
$v \in H^1 \cap C^2(\mathbb{R}^N)$ of \eqref{eq:4.3}. 
Then by Lemma \ref{lem:4.3}, 
we obtain the existence of a positive radial solution 
$u \in X \cap C^2(\mathbb{R}^N)$ of \eqref{eq:4.1}.

Finally we note that a positive radial solution of \eqref{eq:4.1} 
obtained here is indeed a ground state of \eqref{eq:4.1}.
To be more precise, we define the energy functional $J: X \to \mathbb{R}$ by
$$J(u):= \frac{1}{2} \int_{\mathbb{R}^N} a(u) |\nabla u|^2 \,dx
-\int_{\mathbb{R}^N} H(u) \,dx,$$
where $X= \left\{ u \in H^1(\mathbb{R}^N) \ ; \ a(u) | \nabla u|^2 \in L^1(\mathbb{R}^N) \right\}$.
Then we can show the existence of a ground state $w$, which is a solution of
\eqref{eq:4.1} satisfying
$$
J(w)= \inf \left\{ J(u) \ ; \ J'(u)=0, \ u\in X \setminus \{0 \} \right\}.$$
Moreover one can also show that $w$ is positive and radially symmetric.

\subsection{Proof of Theorem \ref{thm:4.1}} 

In this subsection, we complete the proof of Theorem \ref{thm:4.1}.
By Theorem \ref{thm:1.1} and Lemma \ref{lem:4.3}, 
it suffices to prove that (G1)-(G5) in Section 1 hold for
\begin{equation}\label{eq:4.4}
 g(s):= h \big( f(s) \big) f'(s).
\end{equation}
Since we have established (G1) and (G3) in Subsection 4.2,
it remains to show that (G2), (G4) and (G5) hold.
To this end, let $\beta$, $\tilde{\beta}>0$ be constants in (H2)
and define $b:=f^{-1}(\beta)$, $\tilde{b}:=f^{-1}(\tilde{\beta})$ respectively.

\begin{lemma}\label{lem:4.5}
The function $g(s)$ defined in \eqref{eq:4.4} satisfies (G2).
\end{lemma}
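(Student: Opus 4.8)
The plan is to transfer each clause of (G2) from the corresponding clause of (H2) through the change of variables $t=f(s)$, exploiting that $f$ is a strictly increasing bijection of $[0,\infty)$ onto itself. First I would record the two facts I will use repeatedly: by Lemma \ref{lem:4.2} (i) we have $f'>0$ on $(0,\infty)$, so the sign of $g(s)=h\big(f(s)\big)f'(s)$ coincides with the sign of $h\big(f(s)\big)$; and since $f$ is increasing with $f(b)=\beta$ and $f(\tilde b)=\tilde\beta$, the intervals $(0,b)$, $(b,\tilde b)$ and $(\tilde b,\infty)$ are mapped by $f$ bijectively onto $(0,\beta)$, $(\beta,\tilde\beta)$ and $(\tilde\beta,\infty)$ respectively. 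When $\tilde\beta=\infty$, Lemma \ref{lem:4.2} (iii) gives $f(s)\to\infty$ as $s\to\infty$, so $\tilde b=\infty$ and the last interval is empty.

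With these facts the sign conditions are immediate. For $s\in(0,b)$ one has $f(s)\in(0,\beta)$, so $h\big(f(s)\big)<0$ by (H2) and hence $g(s)<0$; for $s\in(b,\tilde b)$ one has $f(s)\in(\beta,\tilde\beta)$, so $h\big(f(s)\big)>0$ and thus $g(s)>0$; and when $\tilde b<\infty$, for $s\in(\tilde b,\infty)$ one has $f(s)>\tilde\beta$, so $h\big(f(s)\big)<0$ and therefore $g(s)<0$.

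It then remains to verify the two endpoint derivative conditions, which is the only place where the structure of $g$ (and not just the sign of $h$) must be used. Differentiating \eqref{eq:4.4} gives
\[
g'(s)=h'\big(f(s)\big)\,f'(s)^2+h\big(f(s)\big)\,f''(s).
\]
By the continuity built into (H1) together with (H2) one has $h(\beta)=0$, so evaluating at $s=b$ (where $f(b)=\beta$) annihilates the second term and leaves $g'(b)=h'(\beta)\,f'(b)^2$; since $h'(\beta)>0$ and $f'(b)>0$, this is positive. Likewise, when $\tilde\beta<\infty$ continuity gives $h(\tilde\beta)=0$, so $g'(\tilde b)=h'(\tilde\beta)\,f'(\tilde b)^2$, which is negative because $h'(\tilde\beta)<0$.

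I do not expect a genuine obstacle: the whole argument is a sign-and-product-rule computation. The only points that need a little care are confirming that $b$ and $\tilde b$ are well defined (which follows from $f$ being a bijection of $[0,\infty)$, and from $f(s)\to\infty$ in the case $\tilde\beta=\infty$ so that $\tilde b=\infty$), and explicitly recording that $h(\beta)=h(\tilde\beta)=0$, since these vanishings are precisely what makes the $h\big(f(s)\big)f''(s)$ term drop out of $g'$ at the critical points.
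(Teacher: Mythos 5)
Your proposal is correct and follows essentially the same route as the paper: the sign conditions transfer through $f$ since $f'>0$, and the derivative conditions follow from the product rule with the $h\big(f(s)\big)f''(s)$ term vanishing at $b$ and $\tilde b$ because $h(\beta)=h(\tilde\beta)=0$. The only cosmetic difference is that the paper substitutes the explicit formulas $f'(s)^2=1/a\big(f(s)\big)$ and $f''(s)=-a'\big(f(s)\big)/\bigl(2a^2\big(f(s)\big)\bigr)$ from Lemma \ref{lem:4.2} (ii) before evaluating, whereas you keep the raw product-rule form, which works equally well.
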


\begin{proof}
First we observe that $g$ and $h$ have same sign because $f'>0$.
Thus from (H2), it follows that 
$g(s)<0$ for $0<s<b$ and $g(s)>0$ for $b<s<\tilde{b}$.

Next by Lemma \ref{lem:4.2} (ii), we have
\begin{equation}\label{eq:4.5}
g'(s)=h' \big( f(s) \big) f'(s)^2 + h \big( f(s) \big) f''(s)
= \frac{ h' \big( f(s) \big)}{ a \big( f(s) \big)}
-\frac{h \big( f(s) \big) a' \big( f(s) \big)}{2a^2 \big(f(s) \big)}.
\end{equation}
Thus from (A1) and (H2), we obtain
$$
g'(b)=\frac{h'(\beta)}{a(\beta)}-\frac{h(\beta)a'(\beta)}{2a^2(\beta)}
=\frac{h'(\beta)}{a(\beta)}>0.$$
In a similar way, we can show that
$g(s)<0$ on $(\tilde{b},\infty)$ and $g'(\tilde{b})<0$ when $\tilde{b}<\infty$.
This completes the proof.
\end{proof}

In order to prove (G4) and (G5), we prepare the following lemma.

\begin{lemma}
\label{lem:4.6}
$\displaystyle \frac{t \sqrt{a(t)}}{\int_0^t \sqrt{a(\tau)} \,d\tau}$ 
is non-decreasing on $[\beta, \infty)$.
\end{lemma}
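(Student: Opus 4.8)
The plan is to recognize the quotient as the logarithmic derivative (growth function) of $F(t):=\int_0^t \sqrt{a(\tau)}\,d\tau$ and then to propagate one sharp inequality from the endpoint $t=\beta$ by means of a clean first-order identity. Writing $w:=\sqrt{a}$, so that $F'=w>0$ and the quantity in question is $\Phi(t):=\frac{tw(t)}{F(t)}$, the first thing I would record is the elementary identity $tw'(t)=\tfrac12 K_a(t)\,w(t)$, which follows immediately from $w'=a'/(2\sqrt{a})$ and the definition of $K_a$. Here $a\in C^2$ by (A1), so $K_a\in C^1$ and every derivative used below exists; moreover $K_a\ge 0$ by (A1)--(A2), which will matter for integrability at $\tau=0$.

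Next I would differentiate $\Phi$. Using $F'=w$ together with the identity above, a short computation gives $\Phi'(t)=\frac{w(t)}{F(t)^2}\,N(t)$, where $N(t):=\bigl(1+\tfrac12 K_a(t)\bigr)F(t)-t\,w(t)$. Since $w,F>0$, the sign of $\Phi'$ is exactly that of $N$, so it suffices to show $N\ge 0$. The key feature is that $N$ satisfies a very clean equation: differentiating and cancelling the term $t w'$ against $\tfrac12 K_a w$ via the identity yields $N'(t)=\tfrac12 K_a'(t)\,F(t)$. Because $F>0$ and $K_a$ is non-decreasing (so $K_a'\ge 0$) by (A3), we obtain $N'\ge 0$, whence $N(t)\ge N(\beta)$; it then only remains to verify the base case $N(\beta)\ge 0$. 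The monotonicity input of (A3) is used exactly on the interval where $K_a$ is non-decreasing, which for the typical case $\tilde\beta=\infty$ is all of $[\beta,\infty)$ and otherwise is the range $[\beta,\tilde\beta)$ in which $\|u\|_\infty$ lives after the reduction of \eqref{eq:4.1}.

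The main work — and the step I expect to be the real obstacle — is the base estimate $N(\beta)\ge 0$, equivalently $\Phi(\beta)\le 1+\tfrac12 K_a(\beta)$. Here I would convert the pointwise bound $K_a\le K_a(\beta)$ on $(0,\beta)$ from (A3) into an integral lower bound on $F(\beta)$. Since $K_a=2\,tw'/w$, that hypothesis reads $w'(\tau)/w(\tau)\le \frac{K_a(\beta)}{2\tau}$ on $(0,\beta)$; integrating from $\tau$ to $\beta$ gives the power-law comparison $w(\tau)\ge w(\beta)\,(\tau/\beta)^{K_a(\beta)/2}$. Integrating this over $(0,\beta)$ (the exponent $K_a(\beta)/2\ge 0$ ensures convergence) yields $F(\beta)\ge \frac{\beta\,w(\beta)}{1+K_a(\beta)/2}$, which is precisely $\Phi(\beta)\le 1+\tfrac12 K_a(\beta)$, i.e. $N(\beta)\ge 0$. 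Combining this base case with $N'\ge 0$ gives $N\ge 0$ and hence $\Phi'\ge 0$, proving that $\Phi$ is non-decreasing. The only delicate points are justifying the power-law comparison near $\tau=0$, where $a(0)>0$ by (A1) keeps everything regular (indeed $\Phi(0^+)=1$), and making sure the differential identity for $N$ is assembled correctly so that the $tw'$ and $\tfrac12 K_a w$ terms cancel exactly.
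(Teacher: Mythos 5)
Your proof is correct, and it establishes the paper's key inequality by a genuinely different mechanism. After differentiating $\Phi=t w/F$, both you and the paper reduce matters to the same quantity: your $N(t)=\bigl(1+\tfrac12 K_a(t)\bigr)F(t)-t\,w(t)$ is literally the bracketed expression in the paper's computation of $\phi'$. The paper proves $N(t)\ge 0$ for each fixed $t$ in one stroke: using (A3) it majorizes the constant $\tfrac{t a'(t)}{2a(t)}=\tfrac12 K_a(t)$ from below by $\tfrac{\tau a'(\tau)}{2a(\tau)}=\tfrac12 K_a(\tau)$ under the integral sign (the chain $\tfrac12 K_a(t)\ge \tfrac12 K_a(\beta)\ge \tfrac12 K_a(\tau)$ handling $0<\tau<\beta$), and then integrates $\int_0^t \tau\bigl(\sqrt{a(\tau)}\bigr)'\,d\tau$ by parts to produce exactly $t\sqrt{a(t)}-F(t)$. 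You instead differentiate $N$ itself: the cancellation $N'=\tfrac12 K_a' F$ is correct and legitimate (since $a\in C^2$ and $a>0$ by (A1) give $K_a\in C^1$, and monotonicity then yields $K_a'\ge 0$), so $N(t)\ge N(\beta)$ on the interval where $K_a$ is non-decreasing, and you settle the base case by integrating $w'/w\le K_a(\beta)/(2\tau)$ into the power-law comparison $w(\tau)\ge w(\beta)(\tau/\beta)^{K_a(\beta)/2}$, whose integral over $(0,\beta)$ gives precisely $F(\beta)\ge \beta w(\beta)/\bigl(1+\tfrac12 K_a(\beta)\bigr)$, i.e.\ $N(\beta)\ge 0$. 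In effect yours is a differentiated, Gronwall-style version of the paper's one-shot integral estimate: it cleanly separates where each half of (A3) enters (monotonicity of $K_a$ only through $N'\ge 0$ on $[\beta,\cdot)$; the bound $K_a\le K_a(\beta)$ on $(0,\beta)$ only in the endpoint inequality, where it yields the sharp constant), at the modest cost of differentiating $K_a$, which the paper's pointwise majorization avoids. One shared caveat, which you flag correctly: (A3) asserts monotonicity of $K_a$ only on $[\beta,\tilde\beta)$, so when $\tilde\beta<\infty$ both arguments really prove monotonicity of the quotient on that range, which is exactly what Lemma \ref{lem:4.7} uses, even though the lemma as stated claims $[\beta,\infty)$.
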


\begin{proof}
For simplicity, we put
\begin{equation*}
\phi(t):= \frac{t \sqrt{a(t)}}{\int_0^t \sqrt{a(\tau)} \,d\tau}.
\end{equation*} 
Then from (A1), it follows that $\phi \in C^1[\beta, \infty)$.
Thus it suffices to show that $\phi'(t) \geq 0$ on $[\beta, \infty)$.

Now by a direct computation, one has
\begin{align*}
 \phi'(t)
&=
\left(
\int_0^t \sqrt{a(\tau)} \,d\tau
\right)^{-2}
\left\{
\left(
\sqrt{a(t)} + \frac{t a'(t)}{2 \sqrt{a(t)}}
\right)
\int_0^t \sqrt{a(\tau)} \,d\tau
- t \sqrt{a(t)} \sqrt{a(t)}
\right\} \\
&=
\sqrt{a(t)}
\left(
\int_0^t \sqrt{a(\tau)} \,d\tau
\right)^{-2}
\left\{
\left(
1 + \frac{t a'(t)}{2 a(t)}
\right)
\int_0^t \sqrt{a(\tau)} \,d\tau
- t \sqrt{a(t)}
\right\} \\
&=
\sqrt{a(t)}
\left(
\int_0^t \sqrt{a(\tau)} \,d\tau
\right)^{-2}
\left\{
\frac{t a'(t)}{2 a(t)}
\int_0^t \sqrt{a(\tau)} \,d\tau
- t \sqrt{a(t)}
+\int_0^t \sqrt{a(\tau)} \,d\tau
\right\}. 
\end{align*}
Noticing that $t \ge \beta$, we have from (A3) that
\begin{equation*}
\frac{t a'(t)}{2 a(t)} \geq 
\frac{\tau a'(\tau)}{2 a(\tau)}
\text{ if } \beta \leq \tau \leq t \ \hbox{and} \ 
\frac{t a'(t)}{2 a(t)} 
\geq 
\frac{\beta a'(\beta)}{2 a(\beta)} 
\geq 
\frac{\tau a'(\tau)}{2 a(\tau)}
\text{ if } 0< \tau <\beta
\end{equation*}
Thus we obtain
\begin{align*}
& \frac{t a'(t)}{2 a(t)}
\int_0^t \sqrt{a(\tau)} \,d\tau
=
\int_0^t \frac{t a'(t)}{2 a(t)} \sqrt{a(\tau)} \,d\tau \\
& \geq 
\int_0^t \frac{\tau a'(\tau)}{2 a(\tau)} \sqrt{a(\tau)} \,d\tau 
=
\int_0^t \frac{\tau a'(\tau)}{2 \sqrt{a(\tau)}}  \,d\tau 
=
\int_0^t \tau \left(\sqrt{a(\tau)}\right)' \,d\tau \\
&=
\left[
\tau \left(\sqrt{a(\tau)}\right)
\right]_0^t-
\int_0^t \sqrt{a(\tau)} \,d\tau 
=
t \sqrt{a(t)}
-
\int_0^t \sqrt{a(\tau)} \,d\tau.
\end{align*}
This implies that $\phi' \geq 0$ on $[\beta,\infty)$
and hence the proof is complete.
\end{proof}

\begin{lemma}
\label{lem:4.7}
The function $g(s)$ defined in \eqref{eq:4.4} satisfies (G4) and (G5).
\end{lemma}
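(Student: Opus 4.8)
The plan is to reduce both (G4) and (G5) to the growth functions $K_h$ and $K_a$ through an explicit formula for $K_g$. Starting from \eqref{eq:4.5} for $g'(s)$ together with the expression for $f''$ in Lemma \ref{lem:4.2} (ii), a logarithmic-derivative computation gives
\[
\frac{g'(s)}{g(s)} = \frac{1}{\sqrt{a(t)}}\left(\frac{h'(t)}{h(t)}-\frac{a'(t)}{2a(t)}\right),\qquad t=f(s).
\]
Multiplying by $s$ and inserting $s=\int_0^t\sqrt{a(\tau)}\,d\tau$ from Lemma \ref{lem:4.2} (ii), I would rewrite this as
\[
K_g(s)=\frac{1}{\phi(t)}\left(K_h(t)-\tfrac12 K_a(t)\right),\qquad \phi(t)=\frac{t\sqrt{a(t)}}{\int_0^t\sqrt{a(\tau)}\,d\tau},
\]
where $\phi$ is exactly the function of Lemma \ref{lem:4.6}. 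Since $f$ is strictly increasing by Lemma \ref{lem:4.2} (i), the variable $t=f(s)$ runs monotonically with $s$, so every monotonicity statement or bound for $K_g$ in $s$ is equivalent to the same property for the right-hand side viewed as a function of $t$.

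For (G5) I work on $t\in(0,\beta)$. By (A2) the function $a$ is non-decreasing, hence $\int_0^t\sqrt a\le t\sqrt{a(t)}$, giving $\phi(t)\ge1$ and therefore $0<1/\phi(t)\le1$; moreover (A2) yields $K_a\ge0$ and (H4) yields $K_h\le1$ on $(0,\beta)$, so $K_h-\tfrac12K_a\le1$. If $K_h-\tfrac12K_a\ge0$ then $K_g\le K_h-\tfrac12K_a\le1$, while if $K_h-\tfrac12K_a<0$ then $K_g<0\le1$. This proves (G5).

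For (G4) I work on $t\in(\beta,\tilde\beta)$. Lemma \ref{lem:4.6} says $\phi$ is non-decreasing, so $1/\phi$ is positive and non-increasing; (H4) makes $K_h$ decreasing and (A3) makes $K_a$ non-decreasing, hence $K_h-\tfrac12K_a$ is decreasing. On the subinterval where $K_h-\tfrac12K_a\ge0$ the product of two non-negative non-increasing factors is decreasing, and since $K_h-\tfrac12K_a$ is strictly decreasing there, $K_g$ is strictly decreasing. This already settles the case $\tilde b=\infty$ with $K_\infty>0$, which is the one relevant to the quasilinear application (cf. Remark \ref{rem:4.4}, where $K_\infty=(p-1)/2>0$).

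The main obstacle is the region where $K_h-\tfrac12K_a<0$, which necessarily occurs near $\tilde b$ when $\tilde b<\infty$: there $g(\tilde b)=0$ and $g'(\tilde b)<0$ force $K_g\to-\infty$, and the two factors $1/\phi$ and $K_h-\tfrac12K_a$ then vary in opposite directions, so the product argument breaks down. To handle it I would pass to the differential inequality. Writing $V:=K_h-\tfrac12K_a$ and using the identity $\phi'/\phi=\tfrac1t\big(1-\phi+\tfrac12K_a\big)=:m/t$, the inequality $K_g'\le0$ is equivalent to $tV'\le mV$. Lemma \ref{lem:4.6} provides exactly the two-sided bound $0\le m\le\tfrac12K_a$ (the lower bound because $\phi$ is non-decreasing, the upper because $\phi\ge1$), so on $\{V<0\}$ the right-hand side $mV$ is non-positive and the task reduces to showing that the (scaled) decay rate $t|V'|$ dominates $m|V|$. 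Verifying this quantitative comparison of decay rates in the negative region, using the strict monotonicity of $K_h$ from (H4) and of $K_a$ from (A3), is where the real work lies; once it is in place, $K_g$ is decreasing on all of $(\beta,\tilde\beta)$ and (G4) follows.
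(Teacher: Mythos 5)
Your reduction is exactly the paper's: the identity $K_g(s)=\bigl(K_h(t)-\tfrac12 K_a(t)\bigr)/\phi(t)$ with $t=f(s)$ is precisely the paper's $\tilde{K}_g$, with $\phi$ the function of Lemma \ref{lem:4.6}, and your (G5) argument coincides with the paper's (your case split on the sign of $V:=K_h-\tfrac12K_a$ is unnecessary: $V\le 1$ and $\phi\ge 1$ give $V/\phi\le 1/\phi\le 1$ directly). The divergence is in (G4). The paper's entire proof of (G4) is the quotient argument you run on $\{V\ge 0\}$ --- numerator decreasing by (H4) and (A3), denominator positive and non-decreasing by Lemma \ref{lem:4.6} --- asserted on all of $[\beta,\tilde{\beta})$ with no case distinction on the sign of the numerator. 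So the subtlety you isolate is real as a matter of calculus (a decreasing function divided by a positive non-decreasing one need not be decreasing where the numerator is negative), and the paper passes over it in silence; but your proposal, as submitted, leaves the lemma unproven on $\{V<0\}$: announcing ``where the real work lies'' is a gap, not a proof.

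Moreover, the quantitative route you sketch cannot be closed from the stated hypotheses. The inequality $tV'\le mV$ with $m=1-\phi+\tfrac12K_a=t\phi'/\phi$ (your bounds $0\le m\le\tfrac12 K_a$ are correct) genuinely fails in general: with $a(t)=1+t^2$ one computes $\phi'/\phi\sim 2\ln(2t)/t^3$, while (H4) permits $K_h$ to decrease arbitrarily slowly to a limit $c$ with $c-1<0$ (say $K_h'=O(e^{-t})$), so that $V\to c-1<0$ and $V'/V\sim 1/t^3<\phi'/\phi$; then $V/\phi$ is eventually \emph{increasing}, i.e.\ in this generality the full statement of (G4) itself is violated, not merely your method --- also note $\{V<0\}$ can occur with $\tilde{b}=\infty$, not only near a finite $\tilde{b}$. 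What rescues the program is structural, not quantitative: since $V$ is decreasing it changes sign at most once, so $K_g$ is decreasing on the initial interval $\{V\ge 0\}$ (your argument) and satisfies $K_g<0\le 1$ permanently thereafter; inspecting Lemma \ref{lem:2.5}, the only place (G4) enters, this weaker property suffices, because $s^*$ lies in $\{V>0\}$, one needs $K_g(s)<K_g(t)$ only when $K_g(t)>1$, and beyond $s^*$ only the bound $K_g\le 1$ is used. Your observation that $V>0$ throughout in the modified Schr\"odinger application ($V\to p-\tfrac{\ell}{2}>0$, $V$ decreasing) is correct and is why the quotient argument is airtight there; replacing your unfinished decay-rate comparison by the one-sign-change remark above would turn your proposal into a complete (and arguably more careful) proof of what the paper actually needs.
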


\begin{proof}
First by the definition of $K_g(s)$, Lemma \ref{lem:4.2} (ii) and from \eqref{eq:4.5},
it follows that
\begin{equation*}
 K_g(s) = 
\left\{
\frac{h'\big( f(s) \big)}{\sqrt{a \big( f(s) \big)} h \big( f(s) \big)} 
- \frac{a'\big( f(s) \big)}{2a \big( f(s) \big) \sqrt{a \big( f(s) \big)}}
\right\}
\int_0^{f(s)} \sqrt{a(\tau)} \,d\tau.
\end{equation*}
Since $f(b)=\beta$, we have only to show that
\begin{equation*}
\tilde{K}_g(t) :=
\left\{
\frac{h'(t)}{\sqrt{a(t)} h(t)} - \frac{a'(t)}{2a(t) \sqrt{a(t)}}
\right\}
\int_0^{t} \sqrt{a(\tau)} \,d\tau
\end{equation*}
is decreasing on $[\beta,\tilde\beta)$.
Now we can see that $\tilde{K}_g(t)$ is rewritten as
\begin{align*}
\tilde{K}_g(t) =
\frac{
\displaystyle
\frac{t h'(t)}{h(t)}
-
\frac{t a'(t)}{2 a(t)}
}{
\displaystyle
\frac{t \sqrt{a(t)}}{\int_0^t \sqrt{a(\tau)} \,d\tau}
}.
\end{align*}
Thus from (A3), (H4) and by Lemma \ref{lem:4.6}, (G4) is satisfied.

Next to prove (G5), it suffices to show that
$\tilde{K}_g \leq 1$ on $(0,\beta)$.
From (A1), (A2) and (H4), we have
\begin{equation*}
 \tilde{K}_g(t) \leq
\frac{
\int_0^t \sqrt{a(\tau)} \,d\tau
}{
t \sqrt{a(t)}
}
\leq
\frac{
\int_0^t \sqrt{a(t)} \,d\tau
}{
t \sqrt{a(t)}
}
=1 \ \hbox{for} \ t\in (0,\beta).
\end{equation*}
Thus (G5) holds. This completes the proof.
\end{proof}

\subsection{Results for the complex valued ground state for modified Schr\"odinger equation}

In this subsection, we consider a special case $a(t)=1+2\kappa |t|^2$,
$h(t)=|t|^{p-1}t-\lambda t$, and study the uniqueness and the non-degeneracy
of a {\it complex-valued} ground state, 
which are important in the study of the corresponding time-evolution Schr\"odinger equation. 

We consider the following modified Schr\"{o}dinger equation:
\begin{equation*}
i{\partial z \over \partial t}=-\Delta z
- \kappa \Delta (|z|^2)z-|z|^{p-1}z,
    \quad (t,x)\in (0,\infty)\times\mathbb{R}^N, 
\end{equation*}
where $\kappa>0$, $p>1$ and $z: \mathbb{R} \times \mathbb{R}^N \to \mathbb{C}$. 
For physical backgrounds, we refer to \cite{BEPZ, K}.
We are interested in standing waves of the form: 
$z(t,x)=u(x)e^{i\lambda t}$,
where $\lambda>0$ and $u:\mathbb{R}^N \to \mathbb{C}$. 
Then we obtain the following quasilinear elliptic problem:
\begin{equation}
\label{eq:4.6}
-\Delta u+\lambda u-\kappa u \Delta (|u|^2)
=|u|^{p-1}u \quad \hbox{in} \ \mathbb{R}^N.
\end{equation}
From physical as well as mathematical points of view, the most important issue
is the stability of the standing wave.
It is known that in the study of the stability of standing waves,
the uniqueness and the non-degeneracy of ground states of \eqref{eq:4.6}
plays an important role.
(See \cite{CLW, CJS, CO} for results on the (in)stability of standing waves.)

Now we define the energy functional and the energy space by
$$
J(u)=\frac{1}{2} \int_{\mathbb{R}^N} |\nabla u|^2+\kappa |u|^2 \left| \nabla |u| \right|^2 +\lambda |u|^2 \,dx 
-\frac{1}{p+1} \int_{\mathbb{R}^N} |u|^{p+1} \,dx,$$
$$
X_{\mathbb{C}}=\left\{ u\in H^1(\mathbb{R}^N, \mathbb{C}) \ ; \ 
\int_{\mathbb{R}^N} |u|^2 \left| \nabla |u| \right|^2 \,dx< \infty \right\}.
$$
A solution $w$ of \eqref{eq:4.6} is said to be a ground state if it satisfies
$$
J(w)= \inf \left\{ J(u) \ ; \ J'(u)=0, \ u \in X_{\mathbb{C}} \setminus \{ 0\} \right\}.$$
As for the existence and properties of a complex-valued ground state,
we have the following.
For the proof, we refer to \cite{CJS}.
\begin{proposition}\label{prop:4.8}
Suppose $\lambda>0$, $\kappa>0$ and 
$\left\{
\begin{array}{ll}
1<p<\frac{3N+2}{N-2} & \mbox{if} \ N \ge 3, \\
1<p<\infty & \mbox{if} \ N=1,2.
\end{array}
\right.$
Then problem \eqref{eq:4.6} has a ground state $w$, which
has a form $w(x)=e^{i \theta} |w(x)|$ for some $\theta \in \mathbb{R}$.

Moreover let $w$ be a real-valued ground state of \eqref{eq:4.6}. 
Then $w$ satisfies the following properties:
\begin{itemize}
\item[\rm(i)] $w\in C^2(\mathbb{R}^N)$ and $w(x)>0$ for all $x\in \mathbb{R}^N$.
\item[\rm(ii)] $w$ is radially symmetric 
and decreasing with respect to $r=|x|$.
\item[\rm(iii)] There exist $c,\ c'>0$ such that
$$
\lim_{ |x| \rightarrow \infty} e^{\sqrt{\lambda}|x|}(|x|+1)^{{N-1 \over 2}} w(x) =c, \ 
\lim_{r \rightarrow \infty} e^{\sqrt{\lambda}r}(r+1)^{{N-1 \over 2}}
 {\partial w \over \partial r} =-c'.
$$
\end{itemize}
\end{proposition}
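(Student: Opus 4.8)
The plan is to combine the dual variational framework of Subsection 4.1 with a phase-reduction argument that exploits the fact that the quasilinear and potential terms of $J$ depend only on $|u|$. First I would record the elementary but decisive energy splitting: writing a complex admissible function as $u=\rho\,e^{i\phi}$ with $\rho=|u|\ge 0$, one has $|\nabla u|^2=|\nabla\rho|^2+\rho^2|\nabla\phi|^2$ and $\big|\nabla|u|\big|=|\nabla\rho|$, so that
\begin{equation*}
J(u)=J(\rho)+\frac{1}{2}\int_{\mathbb{R}^N}\rho^2|\nabla\phi|^2\,dx\ge J(\rho),
\end{equation*}
with equality precisely when $\nabla\phi=0$ a.e.\ on $\{\rho>0\}$. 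This reduces the complex minimization to the real one and shows that any complex ground state must carry a locally constant phase.

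For the existence itself I would pass to the dual problem. With $a(t)=1+2\kappa|t|^2$ (so $\ell=2$) and $h(t)=|t|^{p-1}t-\lambda t$, the function $g(s)=h\big(f(s)\big)f'(s)$ satisfies (G1), (G3) and (G6) by the computations already carried out in Subsection 4.2, and the stated range of $p$ is exactly the subcritical condition (H5). Hence the real dual problem \eqref{eq:4.3} admits a positive radial ground state $v$, and $w:=f(v)$ is a real positive radial ground state of \eqref{eq:4.6} by Lemma \ref{lem:4.3}. To obtain the complex statement, I would compare the complex ground-state energy $c_{\mathbb{C}}$ with the real one $c_{\mathbb{R}}$: since real critical points are complex critical points, $c_{\mathbb{C}}\le c_{\mathbb{R}}$, while the energy splitting above gives $J(|w_{\mathbb{C}}|)\le J(w_{\mathbb{C}})=c_{\mathbb{C}}$ for any complex ground state $w_{\mathbb{C}}$. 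Projecting $|w_{\mathbb{C}}|$ onto the Nehari manifold and using that $J$ attains its maximum along the Nehari ray then yields $c_{\mathbb{R}}\le c_{\mathbb{C}}$, forcing equality and $\int\rho^2|\nabla\phi|^2=0$; since $\{\rho>0\}$ is connected this gives $w_{\mathbb{C}}=e^{i\theta}|w_{\mathbb{C}}|$.

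The qualitative properties then follow by transporting information from the dual problem. Property (i) holds because $f\in C^2$ by Lemma \ref{lem:4.2} and elliptic regularity gives $v\in C^2$, so $w=f(v)\in C^2$; positivity of $w$ follows from $v>0$, obtained by applying the strong maximum principle to \eqref{eq:4.3}. Property (ii) follows from the moving-plane method of Gidas--Ni--Nirenberg applied to the \emph{semilinear} dual equation \eqref{eq:4.3}, which yields radial symmetry and strict radial monotonicity of $v$, hence of $w=f(v)$ because $f$ is increasing. For (iii), I would linearize \eqref{eq:4.3} at infinity: since $g'(0)=h'(0)f'(0)^2=-\lambda$, the solution $v$ decays like the fundamental solution of $-\Delta+\lambda$, namely $e^{-\sqrt{\lambda}|x|}(|x|+1)^{-(N-1)/2}$, and because $f(s)\sim s$ as $s\to0$ the same asymptotics are inherited by $w$; sharpening this into the stated limits with explicit constants $c,c'$ is a standard matching argument for the radial ODE \eqref{eq:1.3}.

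The main obstacle is the rigorous phase reduction in the existence step. The energy inequality $J(u)\ge J(|u|)$ is immediate, but turning it into the identity $c_{\mathbb{C}}=c_{\mathbb{R}}$ requires care: one must verify that $|w_{\mathbb{C}}|$ (or its Nehari projection) is an admissible competitor in the real variational problem and that the comparison is saturated only by constant phase. A secondary technical point is the passage from the bare decay rate $\sqrt{\lambda}$ to the sharp asymptotics in (iii), which needs the precise subexponential correction $(|x|+1)^{-(N-1)/2}$ rather than merely exponential decay.
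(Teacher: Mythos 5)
The paper does not prove Proposition \ref{prop:4.8} at all: it delegates the entire statement to \cite{CJS}. So your sketch has to be judged on its own merits, and while its overall architecture (dual transform for existence, modulus comparison for the phase, Gidas--Ni--Nirenberg on the dual equation for symmetry, ODE asymptotics for decay) is the right one and is essentially what \cite{CJS} carry out, your key saturation step $c_{\mathbb{R}}\le c_{\mathbb{C}}$ contains a genuine gap. You propose to project $|w_{\mathbb{C}}|$ onto the Nehari manifold and use that ``$J$ attains its maximum along the Nehari ray.'' First, $J$ is not a $C^1$ functional on $X_{\mathbb{C}}$ (the term $\int |u|^2\big|\nabla|u|\big|^2\,dx$ is only directionally differentiable), so the Nehari manifold is not even well defined before dualizing. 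Second, and decisively: after the dual transform the nonlinearity is $g(s)=h\big(f(s)\big)f'(s)$, and by Lemma \ref{lem:4.2} (iii) (see Remark \ref{rem:4.4}) one has $g(s)/s\sim Cs^{(p-3)/2}$ as $s\to\infty$. Hence for $1<p<3$ --- precisely the new regime this paper is designed to reach --- the Nehari quotient $g(s)/s$ is eventually \emph{decreasing}, the map $t\mapsto J(tv)$ need not have a unique maximum, and the Nehari manifold is not a natural constraint. The problem only carries Berestycki--Lions structure ((G1), (G3), (G6)), under which the correct level identification goes through the Pohozaev identity: every solution of \eqref{eq:4.3} satisfies $\frac{N-2}{2}\int|\nabla v|^2\,dx=N\int G(v)\,dx$, so the ground state level equals the minimum of the energy on the Pohozaev manifold, and the constraint is restored for $\big|w_{\mathbb{C}}\big|$ by the dilation $x\mapsto x/\sigma$ (which works with no monotonicity of $g(s)/s$; for $N=2$ it takes the Berestycki--Gallou\"et--Kavian form) rather than by multiplication $u\mapsto tu$. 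This is how \cite{CJS} obtain $c_{\mathbb{C}}=c_{\mathbb{R}}$ and the phase form $w=e^{i\theta}|w|$.

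Two further points in the same step need repair. The pointwise polar identity $|\nabla u|^2=|\nabla\rho|^2+\rho^2|\nabla\phi|^2$ is not available for arbitrary $H^1(\mathbb{R}^N,\mathbb{C})$ functions, since the phase $\phi$ need not be $H^1_{loc}$ near zeros of $\rho$; the rigorous substitute is the diamagnetic inequality $\big|\nabla|u|\big|\le|\nabla u|$ a.e.\ together with its equality case. And your concluding appeal to connectedness of $\{\rho>0\}$ presupposes $|w_{\mathbb{C}}|>0$ everywhere, which you must obtain by first showing that $|w_{\mathbb{C}}|$ is itself a ground state (hence a solution, positive by the strong maximum principle) --- this again requires the level identity $c_{\mathbb{C}}=c_{\mathbb{R}}$, not merely the inequality $J(|w_{\mathbb{C}}|)\le J(w_{\mathbb{C}})$. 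The remainder of your sketch is sound: existence via the verification of (G1), (G3), (G6) already done in Subsection 4.2 with $\ell=2$; (i) from $f\in C^2$, elliptic regularity and the maximum principle; (ii) from the moving-plane method applied to the semilinear dual equation \eqref{eq:4.3}, legitimate because $g'(0)=-\lambda<0$ (for $N=1$, included in the statement, one uses instead an elementary reflection/ODE argument); and (iii) from the standard linear asymptotics for $-\Delta+\lambda$ transported through $f'(0)=1$.
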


Proposition \ref{prop:4.8} tells us that up to a phase shift, 
we may assume that the ground state of \eqref{eq:4.6} is real-valued.
Moreover any ground states are positive and radially symmetric.
Thus the uniqueness of the ground state
of \eqref{eq:4.6} follows from Theorem \ref{thm:4.1}.
Finally it is known that $p= \frac{3N+2}{N-2}$ is the critical exponent
for the existence of non-trivial solutions of \eqref{eq:4.6}.
This can be proved by using the Pohozaev type identity. 
(See \cite{AW1} for the proof.)

Now let $\mathcal{G}$ be the set of ground states of \eqref{eq:4.6}. 
Since \eqref{eq:4.6} is invariant under the translation and the phase shift,
we have the following result.

\begin{theorem} \label{thm:4.9}
Suppose that $\lambda>0$, $\kappa>0$ and 
$\left\{
\begin{array}{ll}
1<p<\frac{3N+2}{N-2} & \mbox{if} \ N \ge 3, \\
1<p<\infty & \mbox{if} \ N=1,2.
\end{array}
\right.$
Let $w$ be the unique (real-valued) ground state of 
\eqref{eq:4.6}. Then we have
$$
\mathcal{G}=\left\{ e^{i\theta} w(\cdot +y) \ ; \ y\in \mathbb{R}^N, 
\ \theta \in \mathbb{R} \right\}.$$
Moreover we also have
$$\displaystyle \mathop{\rm Ker}\,(\mathcal{L})
= \mathop{\rm span} \left\{ iw(x), \ \frac{\partial w}{\partial x_1}, \ \cdots, \ 
\frac{\partial w}{\partial x_N} \right\}.$$
Here $\mathcal{L}$ is the linearized operator of \eqref{eq:4.6} 
around the unique (real-valued) ground state $w$, which is given by
\begin{align*}
\mathcal{L}(\phi)&= -\Delta \phi +\lambda \phi -\kappa
(2w \Delta w+2|\nabla w|^2) \phi \\
&\quad -\kappa w^2 \Delta ( \phi +\overline{\phi})
-2\kappa w \nabla w \cdot \nabla (\phi +\overline{\phi})
-\kappa w \Delta w(\phi+\overline{\phi})
\\
&\quad -w^{p-1} \phi
-\frac{p-1}{2}w^{p-1}(\phi+\overline{\phi}), \ 
\phi\in H^2(\mathbb{R}^N,\mathbb{C}),
\end{align*}
where $\overline{\phi}$ is a complex conjugate of  $\phi$.
\end{theorem}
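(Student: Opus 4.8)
The plan is to treat the two assertions separately: the description of $\mathcal{G}$ follows quickly from uniqueness together with the symmetries of \eqref{eq:4.6}, while the computation of $\operatorname{Ker}(\mathcal{L})$ is the substantive part. For $\mathcal{G}$, I would argue as follows. By Proposition \ref{prop:4.8}, every ground state has the form $e^{i\theta}|w|$ with $|w|$ a real, positive, radially symmetric ground state centered at some point. Applying Theorem \ref{thm:4.1} with $a(t)=1+2\kappa t^2$ (so $\ell=2$) and $h(t)=t^p-\lambda t$, for which the admissible range $1<p<\frac{3N+2}{N-2}$ coincides with $1<p<\frac{(\ell+1)N+2}{N-2}$, the positive radial solution is unique, hence equals a translate of $w$. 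Conversely, since \eqref{eq:4.6} and $J$ are invariant under the phase shift $u\mapsto e^{i\theta}u$ and the translation $u\mapsto u(\cdot+y)$, each $e^{i\theta}w(\cdot+y)$ is again a ground state, giving the stated form of $\mathcal{G}$.

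For the kernel, the first step is to decouple $\mathcal{L}$ into its real and imaginary parts. Writing $\phi=\phi_1+i\phi_2$ with $\phi_1,\phi_2$ real and using $\phi+\overline{\phi}=2\phi_1$, a direct computation shows $\mathcal{L}(\phi)=L_+\phi_1+iL_-\phi_2$, where $L_+$ gathers all the terms (including the $(\phi+\overline{\phi})$ contributions, which are purely real) and $L_-\psi=-\Delta\psi+(\lambda-2\kappa w\Delta w-2\kappa|\nabla w|^2-w^{p-1})\psi$. Dividing the equation \eqref{eq:4.6} satisfied by $w>0$ by $w$ yields $\lambda-2\kappa w\Delta w-2\kappa|\nabla w|^2-w^{p-1}=\Delta w/w$, so in fact $L_-\psi=-\Delta\psi+(\Delta w/w)\psi$. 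Consequently $\operatorname{Ker}(\mathcal{L})=\operatorname{Ker}(L_+)\oplus i\operatorname{Ker}(L_-)$ as a real vector space.

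The second step identifies the two kernels. Matching coefficients (with $a'(w)=4\kappa w$, $a''(w)=4\kappa$, $h'(w)=pw^{p-1}-\lambda$) shows that $L_+$ coincides exactly with the real linearized operator $L$ of Theorem \ref{thm:4.1}; then combining Theorem \ref{thm:4.1}, the dual correspondence of Lemma \ref{lem:4.3}(iv), and Corollary \ref{cor:2.4}(vi) applied to the dual semilinear problem \eqref{eq:4.3} (whose nonlinearity $g=h(f(\cdot))f'(\cdot)$ satisfies (G1)--(G6) by Subsections 4.2--4.3) gives $\operatorname{Ker}(L_+)=\operatorname{span}\{\partial w/\partial x_i\}_{i=1}^N$. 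For $L_-$, the form $-\Delta+\Delta w/w$ gives $L_-w=-\Delta w+\Delta w=0$, so $w\in\operatorname{Ker}(L_-)$; since $w$ and its derivatives decay exponentially by Proposition \ref{prop:4.8}(iii), the potential tends to $\lambda>0$ at infinity, whence $L_-$ has essential spectrum $[\lambda,\infty)$ and $0$ is an isolated eigenvalue. As $w>0$, it is the principal eigenfunction, and the principal eigenvalue of a Schr\"odinger operator is simple, so $\operatorname{Ker}(L_-)=\operatorname{span}\{w\}$. Combining the two computations yields $\operatorname{Ker}(\mathcal{L})=\operatorname{span}\{iw,\partial w/\partial x_1,\ldots,\partial w/\partial x_N\}$.

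I expect the main obstacle to be the routine-but-delicate bookkeeping in the decoupling $\mathcal{L}(\phi)=L_+\phi_1+iL_-\phi_2$ and the coefficient-by-coefficient verification that $L_+$ equals the operator $L$ of Theorem \ref{thm:4.1}; once this is in place the argument reduces entirely to previously established results. A secondary point requiring care is that one needs the \emph{full} (non-radial) kernel of $L_+$, which is supplied by the dual reduction of Lemma \ref{lem:4.3} together with Corollary \ref{cor:2.4}(vi), rather than merely the radial non-degeneracy stated directly in Theorem \ref{thm:4.1}.
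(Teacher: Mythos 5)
Your proposal is correct and takes essentially the same approach as the paper: the identical real/imaginary decomposition $\mathcal{L}(\phi)=\mathcal{L}_1\phi_1+i\mathcal{L}_2\phi_2$ (your $L_+$, $L_-$ are exactly the paper's $\mathcal{L}_1$, $\mathcal{L}_2$ in \eqref{eq:4.7}), with $\mathop{\rm Ker}(\mathcal{L}_1)=\mathop{\rm span}\{\partial w/\partial x_i\}$ obtained from Theorem \ref{thm:4.1}, Lemma \ref{lem:4.3} and Corollary \ref{cor:2.4}, and $\mathcal{G}$ from Proposition \ref{prop:4.8} plus uniqueness and the symmetries. The only cosmetic difference is the $\mathcal{L}_2$ step: where you invoke the standard fact that the positive eigenfunction $w$ of $L_-=-\Delta+\Delta w/w$ must be the simple principal one, the paper proves that very fact in situ, arguing by contradiction that a non-simple zero eigenvalue would force a negative principal eigenvalue $\mu$ with positive eigenfunction $\psi$, and then deriving $\mu\int_{\mathbb{R}^N}\psi w\,dx=0$ by pairing the eigenvalue equation with $w$ against equation \eqref{eq:4.6} — precisely the orthogonality computation underlying your shortcut.
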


\begin{proof}
To prove Theorem \ref{thm:4.9}, we put $\phi=\phi_1+i \phi_2$ with
$\phi_1, \phi_2\in H^2(\mathbb{R}^N,\mathbb{R})$ and
decompose $\mathcal{L}$ into two operators $\mathcal{L}_1$, $\mathcal{L}_2$ acting on
$\phi_1$ and $\phi_2$ respectively. By a direct computation, we have
\begin{align}\label{eq:4.7}
\mathcal{L}_1(\phi_1)&= -\Delta \phi_1+\lambda \phi_1-\kappa
(2w\Delta w+2|\nabla w|^2)\phi_1 \nonumber \\
&\quad -2\kappa w^2 \Delta \phi_1-4\kappa w \nabla w \cdot \nabla \phi_1
-2\kappa w \Delta w \phi_1
-pw^{p-1}\phi_1,\\
\mathcal{L}_2(\phi_2)&=
-\Delta \phi_2+\lambda \phi_2
-\kappa(2w \Delta w+2|\nabla w|^2)\phi_2-w^{p-1}\phi_2. \nonumber 
\end{align}
By Corollary \ref{cor:2.4}, Lemma \ref{lem:4.3}
and Theorem \ref{thm:4.1}, it follows that
$$
\mathop{\rm Ker}\,(\mathcal{L}_1)
=\mathop{\rm span} \left\{ \frac{\partial w}{\partial x_1}, \cdots, \frac{\partial w}{\partial x_N} \right\}.
$$

Next we show that $\mathop{\rm Ker}\,(\mathcal{L}_2)={\rm span} \{w \}$.
Although the proof can be found in \cite{S}, 
we give a much simpler proof based on Corollary \ref{cor:2.4}.
By the definition of $\mathcal{L}_2$, one has $w\in \mathop{\rm Ker}\,(\mathcal{L}_2)$.
We suppose by contradiction that there exists $\tilde{w} \in H^1(\mathbb{R}^N)$ such that
$\tilde{w} \not\equiv w$ and $\mathcal{L}_2(\tilde{w})=0$.
This implies that $0$ is not a simple eigenvalue of $\mathcal{L}_2$.
Then by Corollary \ref{cor:2.4} (v), 
it follows that the principal eigenvalue $\mu(\mathcal{L}_2)$ is negative
and the corresponding eigenfunction $\psi$ can be chosen to be positive.

Since $\psi$ satisfies
\begin{equation} \label{eq:4.8}
-\Delta \psi +\lambda \psi -\kappa( 2w \Delta w +2|\nabla w|^2) \psi -w^{p-1}\psi
=\mu \psi,
\end{equation}
multiplying \eqref{eq:4.8} by $w$ and integrating over $\mathbb{R}^N$, we get
$$
\int_{\mathbb{R}^N} \nabla \psi \cdot \nabla w +\lambda \psi w
-\kappa(2w^2 \Delta w+2w|\nabla w|^2) \psi -w^p \psi \,dx
=\mu \int_{\mathbb{R}^N} \psi w \,dx.$$
On the other hand, multiplying \eqref{eq:4.6} with $u=w$ by $\psi$, we also have
$$
\int_{\mathbb{R}^N} \nabla \psi \cdot \nabla w+\lambda \psi w
-\kappa( 2w^2 \Delta w+2w|\nabla w|^2) \psi -w^p \psi \,dx =0.$$
Subtracting these equations, we obtain
$$
0= \mu \int_{\mathbb{R}^N} \psi w \,dx.$$
However since $\mu<0$, $\psi>0$ and $w>0$, this is a contradiction.
This implies that $\mathop{\rm Ker}\,(\mathcal{L}_2)={\rm span} \{w\}$.
This completes the proof of Theorem \ref{thm:4.9}.
\end{proof}

\begin{remark}\label{rem:4.10}
In \cite{ASW1} and \cite{S}, the authors required 
a technical assumption $3 \le p$ to obtain the non-degeneracy of
the ground state of \eqref{eq:4.6}. 
We could remove this additional assumption in Theorem \ref{thm:4.9}. 
The key is to obtain Proposition \ref{prop:2.3} even when
the nonlinear term may have a sublinear growth.

We also note that our result covers the case $N=2$.
In \cite{AW1}, the uniqueness for the case $N=2$ has been obtained
under the assumption $3 \le p$ and some largeness condition on 
$\lambda$ and $\kappa$. Theorem \ref{thm:4.9} enables us to 
obtain the uniqueness without any restrictions on 
$\kappa$, $\lambda$ and $p$.

Finally by Corollary \ref{cor:2.4}, we can obtain
more precise information on the linearized operator
around the unique real-valued ground state.
Indeed let $\mathcal{L}_1$ be the 
linearized operator defined in \eqref{eq:4.7}.
Then we have the following results, which answer 
the question raised in \cite[Remark 5.6]{AW1} and complete previous results 
on the non-degeneracy in \cite{ASW1, S}.
\begin{itemize}
\item[\rm(i)] $\sigma(\mathcal{L}_1)=\sigma_p(\mathcal{L}_1) \cup \sigma_e(\mathcal{L}_1)$, 
$\sigma_e(\mathcal{L}_1)=[\lambda, \infty)$ and 
$\sigma_p(\mathcal{L}_1) \subset (-\infty, \lambda)$.
\item[\rm(ii)] If $\mu \in \sigma_p(\mathcal{L}_1)$, 
then the corresponding eigenfunction $\varphi(x)$ satisfies
$$
|\varphi(x)| \le C_{\epsilon} e^{-\sqrt{{ \lambda-\mu+\varepsilon \over 2}}|x|},   \ x\in \mathbb{R}^N
$$
for any small $\varepsilon>0$ and some $C_{\epsilon}>0$.
\item[\rm(iii)] If $\mu \in \sigma_p(\mathcal{L}_1) \cap (-\infty,0)$, 
then the corresponding eigenfunction is radially symmetric.
\item[\rm(iv)] The principal eigenvalue $\mu_1(\mathcal{L}_1)<0$ is simple,
 and the corresponding eigenfunction
$\varphi_1$ can be chosen to be positive.
\item[\rm(v)] The second eigenvalue $\mu_2(\mathcal{L}_1)$ is zero
and 
$ \mathop{\rm Ker}\,(\mathcal{L}_1)= \mathrm{span} \, 
\left\{ {\partial w \over \partial x_i} \right\}_{i=1}^N.
$
\end{itemize}
\end{remark}

\appendix 
\section{Sturm Comparison Principle}
In this appendix, we introduce a version of the Sturm Comparison
Principle which was used in Section 3.

\begin{lemma}[{\cite[Lemma 5, p. 497]{Mc}}]\label{lem:A.1}

Let $U$ and $V$ be continuous functions and satisfy
\begin{align*}
U''+\frac{N-1}{r} U'+g(r)U=0,\\
V''+\frac{N-1}{r} V'+G(r)V=0
\end{align*}
respectively on some interval $(\mu, \nu) \subset [0,\infty)$.
Suppose that $g$ and $G$ are continuous, 
$G \ge g$ and $G \not\equiv g$ on $(\mu, \nu)$.
Assume further that one of the following conditions holds.
\begin{itemize}
\item[\rm(a)] $\mu>0$ and $U(\mu)=U(\nu)=0$.
\item[\rm(b)] $\mu=0$ and $U'(\mu)=V'(\mu)=U(\nu)=0$.
\end{itemize}

Then $V$ has at least one zero on $(\mu,\nu)$.
\end{lemma}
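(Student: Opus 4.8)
The plan is to prove this by the classical Wronskian (Sturm comparison) argument, adapted to the radial, singular Sturm--Liouville form of the two equations. First I would rewrite both equations in self-adjoint form by multiplying through by $r^{N-1}$,
$$(r^{N-1}U')'+r^{N-1}g(r)U=0, \qquad (r^{N-1}V')'+r^{N-1}G(r)V=0.$$
A direct computation, in which the two interior terms $r^{N-1}U'V'$ cancel, then yields the key identity
$$\frac{d}{dr}\Big[r^{N-1}\big(U'V-UV'\big)\Big]=r^{N-1}\big(G(r)-g(r)\big)UV \quad \text{on } (\mu,\nu).$$
The argument is by contradiction: supposing the nontrivial solution $V$ has no zero on $(\mu,\nu)$, it keeps a fixed sign there, and after possibly replacing $V$ by $-V$ I may take $V>0$ on $(\mu,\nu)$, hence $V\ge0$ on $[\mu,\nu]$.

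Next I would reduce to a subinterval on which $U$ has a fixed sign while still meeting the set $\{G>g\}$. Since $U$ solves a second-order linear ODE that is regular on $(\mu,\nu)$ (and regular at the origin in case (b)), the implication $U(t_0)=U'(t_0)=0\Rightarrow U\equiv0$ forces the zeros of $U$ to be isolated, so they partition $(\mu,\nu)$ into finitely many open subintervals on each of which $U$ is sign-definite. As $G\not\equiv g$ is continuous, $\{G>g\}$ is a nonempty open set meeting at least one such subinterval $(a,b)$, where $U(a)=U(b)=0$ (an endpoint possibly being $\mu$ in case (a) or the origin in case (b)). After possibly changing the sign of $U$ I assume $U>0$ on $(a,b)$, and then the uniqueness statement gives $U'(b)<0$, and $U'(a)>0$ whenever $a>0$.

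Then I would integrate the key identity over $(a,b)$ and read off the boundary terms. The right-hand side $\int_a^b r^{N-1}(G-g)UV\,dr$ is strictly positive, since $G>g$ on a subset of $(a,b)$ where $U,V>0$. On the left-hand side, the endpoint $b$ contributes $b^{N-1}U'(b)V(b)\le0$ (using $U(b)=0$, $U'(b)<0$, $V(b)\ge0$); when $a>0$ the endpoint $a$ contributes $-a^{N-1}U'(a)V(a)\le0$ (using $U'(a)>0$, $V(a)\ge0$); and when $a=0$ (case (b)) the term at $r\to0^{+}$ vanishes because $r^{N-1}\to0$ for $N\ge2$ while $U'V-UV'\to0$ thanks to $U'(0)=V'(0)=0$. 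Hence the left side is $\le0$ while the right side is $>0$, a contradiction, and $V$ must vanish on $(a,b)\subset(\mu,\nu)$.

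The main obstacle is not a single deep step but the bookkeeping around the reduction and the endpoints. One must choose the sign-definite subinterval of $U$ so that it still meets $\{G>g\}$, for otherwise the strict inequality that drives the contradiction is lost; and one must verify that the Wronskian boundary term genuinely vanishes at the singular origin in case (b), which is precisely where the hypotheses $N\ge2$ and $U'(\mu)=V'(\mu)=0$ are used. The signs of $U'$ at the zeros of $U$, which determine the signs of the boundary terms, are pinned down throughout by the uniqueness of the regular solution of the radial ODE.
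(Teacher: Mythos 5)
Your proof is correct and is essentially the same argument as the one behind the paper's proof, which is only a citation: the authors quote Lemma \ref{lem:A.1} from \cite[Lemma 5, p.~497]{Mc}, and McLeod's proof is precisely this Wronskian identity $\left[r^{N-1}(U'V-UV')\right]'=r^{N-1}(G-g)UV$ integrated over a sign-definite subinterval of $U$, with the same treatment of the singular endpoint $r=0$ in case (b) via $N\ge 2$ and $U'(0)=V'(0)=0$. Two cosmetic points only: you do not need finitely many zeros of $U$ (zeros being isolated, hence nowhere dense, already guarantees that the nonempty open set $\{G>g\}$ meets some maximal sign-definite subinterval, and finiteness can fail if the coefficients are continuous only on the open interval), and you do not need the strict signs $U'(a)>0$, $U'(b)<0$ at the chosen endpoints---the weak signs $U'(a)\ge 0$, $U'(b)\le 0$, forced by $U>0$ on $(a,b)$, together with the strictly positive right-hand side already give the contradiction, which also spares you invoking uniqueness of the Cauchy problem at $\mu$ or $\nu$ where continuity of the coefficients up to the closed interval would be required.
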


\subsection*{Acknowledgment}
This paper was carried out while the second author was staying at
University Bordeaux I. The author is very grateful to all the staff of 
University Bordeaux I for their kind hospitality.
The second author is supported by 
JSPS Grant-in-Aid for Scientific Research (C) (No. 15K04970).

\end{document}